\theoremstyle{plain}
\newtheorem{thm}{Theorem}
\numberwithin{thm}{section}
\newtheorem{prop}[thm]{Proposition}
\newtheorem{lemma}[thm]{Lemma}
\newtheorem{cor}[thm]{Corollary}
\theoremstyle{definition}
\newtheorem{defn}{Definition}
\numberwithin{defn}{section}
\newtheorem{ex}{Example}
\numberwithin{ex}{section}
\numberwithin{nota}{section}
\newtheorem{remark}{Remark}
\newtheorem{remarks}[remark]{Remarks}
\numberwithin{remark}{section}
\newcommand{\ie}{i.e.\@\xspace}
\newcommand{\ac}{`}
\renewcommand{\epsilon}{\varepsilon}
\renewcommand{\phi}{\varphi}
\renewcommand{\Im}{\operatorname{Im}}
\newcommand{\op}{^{\textup{op}}}
\newcommand{\can}{^{\textup{can}}}
\newcommand{\cartcov}{^{\textup{cart,cov}}}
\newcommand{\pmap}
{\rightharpoondown}
\newcommand{\Hom}{\operatorname{Hom}}
\renewcommand{\lim}{\operatorname{lim}}
\newcommand{\dom}{{\operatorname{dom}}}
\newcommand{\comma}[2]			
{\mbox{$(#1\!\downarrow\!#2)$}}
\newcommand{\sheafify}{\mathrm{a}}	
\newcommand{\CAT}{\mathbf{CAT}}
\newcommand{\Topos}{\mathbf{Topos}}
\newcommand{\Set}{\mathbf{Set}}
\newcommand{\Loc}{\mathbf{Loc}}
\newcommand{\Sh}{\mathbf{Sh}}
\newcommand{\Fib}{{\mathbf{Fib}}}
\newcommand{\Sub}{\mathrm{Sub}}
\newcommand{\dcat}{\mathbb{D}}
\newcommand{\ecat}{\mathbb{E}}
\newcommand{\cbicat}{\mathcal{C}}
\newcommand{\cbciat}{\mathcal{C}}
\newcommand{\dbicat}{\mathcal{D}}
\newcommand{\gbicat}{\mathcal{G}}
\newcommand{\Etopos}{{\cal E}}
\newcommand{\Ftopos}{{\cal F}}
\newcommand{\mono}{\rightarrowtail}
\newbox\xrat@below
\newbox\xrat@above
\newcommand{\xrightarrowtail}[2][]{%
	\setbox\xrat@below=\hbox{\ensuremath{\scriptstyle #1}}%
	\setbox\xrat@above=\hbox{\ensuremath{\scriptstyle #2}}%
	\pgfmathsetlengthmacro{\xrat@len}{max(\wd\xrat@below,\wd\xrat@above)+.6em}%
	\mathrel{\tikz [>->,baseline=-.75ex]
		\draw (0,0) -- node[below=-2pt] {\box\xrat@below}
		node[above=-2pt] {\box\xrat@above}
		(\xrat@len,0) ;}}
\tikzset{Rightarrow/.style={double equal sign distance,>={Implies},->},
	triple/.style={-,preaction={draw,Rightarrow}}}
\begin{document}

\title{Fibred sites and existential toposes}

\author{Olivia Caramello}

\date{22 December 2022}

\maketitle

\begin{abstract}
	In the context of relative topos theory via stacks, we introduce the notion of existential fibred site and of existential topos of such a site. These notions allow us to develop relative topos theory in a way which naturally generalizes the construction of toposes of sheaves on locales, and also provides a framework for investigating the connections between Grothendieck toposes as built from sites and elementary toposes as built from triposes. Then we focus on fibred preorder sites and establish a fibred generalisation of the ideal-completion of a preorder site. Lastly, we provide an explicit description of the hyperconnected-localic factorization of a geometric morphism in terms of internal locales.    
\end{abstract}

\tableofcontents

\section{Introduction}

The aim of this paper is to present a way for representing relative toposes, in the sense of \cite{CaramelloZanfa}, which naturally generalizes the construction of the topos of sheaves on a locale, and which is particularly effective for describing in a simple way the morphisms between relative toposes.

Recall that, given locales $L$ and $L'$, the morphisms $\Sh(L)\to \Sh(L')$ correspond exactly to the locale homomorphisms $L\to L'$; this actually generalizes to locales internal to an arbitrary Grothendieck topos, as shown in section \ref{sec:localicmorphisms}. 

Our representation of relative toposes is based on the concept of \emph{existential fibred site}, introduced in section \ref{sec:existentialfibredsites} after providing, in the previous sections, the necessary preliminaries on relative topos theory. By using this notion, we shall be able to describe the morphisms between two relative toposes as morphisms between the associated existential fibred sites. This result arises as a byproduct of a relative version of Diaconescu's theorem, formulated in the setting of indexed categories and stacks, which we establish in section \ref{sec:relativeDiaconescu}. The notion of existential fibred site also allows us to obtain a site-theoretic characterization of internal locales to toposes of sheaves on arbitrary (non-necessarily cartesian) sites. 

The construction of \emph{existential toposes}, i.e. of (relative) toposes of sheaves on existential fibred sites, combines and generalizes, in a sense, the construction of Grothendieck toposes from sites with the fibrational construction of elementary toposes from triposes (cf. \cite{tripostheory}). Indeed, as it will be shown in future work on the subject, the tripos-to-topos construction can be naturally understood in terms of existential fibred sites. 
 
In section \ref{sec:completionfibredpreordersite} we focus our attention on fibred preorder sites, and introduce a fibred generalization of the ideal-completion of a preorder site. 

Lastly, in section \ref{sec:hyperconn}, we investigate the hyperconnected-localic factorization of a geometric morphism from the point of view of internal locales, providing an explicit description of such locales in terms of morphisms of sites inducing the given morphism.

\section{Preliminaries}\label{sec:preliminaries}

\subsection*{Terminology}

Given a (small-generated) site $({\cal C}, J)$, we shall denote by $l_{J}$, or simply by $l$ (when the topology $J$ can be unambigously inferred from the context), the canonical functor ${\cal C}\to \Sh({\cal C}, J)$, and by $a_{J}$ the associated sheaf functor $[{\cal C}^{\textup{op}}, \Set] \to \Sh({\cal C}, J)$.

Given two (small-generated) sites $({\cal C}, J)$ and $({\cal D}, K)$, the geometric morphism $\Sh({\cal D}, K) \to \Sh({\cal C}, J)$ induced by a morphism of sites $F:({\cal C}, J) \to ({\cal D}, K)$ will be denoted by $\Sh(F)$, while the geometric morphism induced by a comorphism of sites $G:({\cal D}, K) \to ({\cal C}, J)$ will be denoted by $C_{G}$. 

Given an object $E$ of a category $\cal E$ and an arrow $f:E'\to E$ in $\cal E$, we shall denote $f$ by $[f:E' \to E]$ when considered as an object of the slice category ${\cal E}\slash E$.

The canonical topology on a Grothendieck topos $\cal E$ will be denoted by $J^{\textup{can}}_{\cal E}$.

The $2$-category of Grothendieck toposes, geometric morphisms and geometric transformations will be denoted by $\Topos$.

The \ac meta-$2$-category' of categories, functors and
natural transformations will be denoted by $\CAT$.

For the notions of weakly dense morphism of sites, of $J$-dense, $J$-faithful and $J$-full functor we refer the reader to \cite{denseness}.

\subsection{Relative sites, relative toposes}\label{subsec:relsitesreltop}

Let us first recall the theory of relative sites and relative toposes as developed in \cite{CaramelloZanfa}. 
	
Given an indexed category ${\mathbb D}:{\cal C}^{\textup{op}} \to \textup{Cat}$ and a Grothendieck topology $J$ on $\cal C$, we shall denote by 
\[
p_{\mathbb D}:{\cal G}({\mathbb D}) \to {\cal C}
\]
the fibration associated with $\mathbb D$ through the Grothendieck's construction. 
	
Given a Grothendieck topology $J$ on $\cal C$, the \emph{Giraud topology} $J_{\mathbb D}$ (also denoted by $M^{p_{\mathbb C}}_{J}$) on ${\cal G}({\mathbb D})$ is the smallest topology which makes the projection functor $p_{\mathbb D}:{\cal G}({\mathbb D}) \to {\cal C}$ a comorphism of sites to $({\cal C}, J)$.	
	
	\begin{defn}
		Let $({\cal C}, J)$ be a small-generated site. A \emph{relative site}\index{site!relative -} over $({\cal C}, J)$ is a site of the form $({\cal G}({\mathbb D}), J')$, where ${\mathbb D}$ is a $\cal C$-indexed category and $J'$ is a Grothendieck topology on ${\cal G}({\mathbb D})$ containing the Giraud topology $J_{\mathbb D}$. 
		
		Any relative site $({\cal G}({\mathbb D}), J')$ is endowed with the structure comorphism of sites $p_{\mathbb D}:({\cal G}({\mathbb D}), J') \to ({\cal C}, J)$.
	\end{defn}

Given relative sites $p_{\mathbb D}:({\cal G}({\mathbb D}), J') \to ({\cal C}, J)$ and $p_{\mathbb D}':({\cal G}({\mathbb D}'), J'') \to ({\cal C}, J)$ over a site $({\cal C}, J)$, we say that a \emph{morphism of relative sites} over $({\cal C}, J)$ is a morphism of fibrations $p_{\mathbb D}\to p_{{\mathbb D}'}$ which is also a morphism of sites $({\cal G}({\mathbb D}), J')  \to ({\cal G}({\mathbb D}'), J'')$.

We shall denote by $\textup{\bf MorRelSites}_{({\cal C}, J)}(p_{\mathbb D}, p_{{\mathbb D}'})$ the category of morphisms of relative sites $p_{\mathbb D} \to p_{{\mathbb D}'}$ over $({\cal C}, J)$ and natural transformations between them.

\begin{defn}
Let $({\cal C}, J)$ be a small-generated site. A \emph{relative topos} over $\Sh({\cal C}, J)$ is a Grothendieck topos $\cal E$, together with a geometric morphism $p:{\cal E}\to \Sh({\cal C}, J)$.
\end{defn}
	
\begin{thm}[Theorem 8.2.1 \cite{CaramelloZanfa}]\label{thm:characterizationrelativetoposes}
		Let $({\cal C}, J)$ be a small-generated site. Then any relative site over $({\cal C}, J)$ yields a relative topos over $\Sh({\cal C}, J)$; more precisely, any relative site $$p_{{\mathbb D}}:({\cal G}({\mathbb D}), J')\to ({\cal C}, J)$$ induces the relative topos
		\[
		C_{p_{\mathbb D}}:\Sh({\cal G}({\mathbb D}), J')\to \Sh({\cal C}, J),
		\]
		where $C_{p_{\mathbb D}}$ is the geometric morphism induced by $p_{\mathbb D}$, regarded as a comorphism of sites $({\cal G}({\mathbb D}), K) \to ({\cal C}, J)$.
		
		Conversely, any relative topos $f:{\cal E}\to \Sh({\cal C}, J)$ is of the form $C_{p_{\mathbb D}}$ for some relative site $p_{{\mathbb D}}:({\cal G}({\mathbb D}), J')\to ({\cal C}, J)$ (for instance, one can take $p_{{\mathbb D}}$ to be the \emph{canonical relative site of $f$}, in the sense of section \ref{sec:canonicalrelativesite} below).
\end{thm}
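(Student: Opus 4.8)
The plan is to prove the two assertions separately: the first is essentially formal, whereas the second rests on the construction of the \emph{canonical relative site} of $f$ and carries all the difficulty.

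\textbf{Forward direction.} The point is to check that $p_{\mathbb D}\colon(\mathcal G(\mathbb D),J')\to(\mathcal C,J)$ is a comorphism of sites; granting this, the general theory of comorphisms of sites produces the geometric morphism $C_{p_{\mathbb D}}\colon\Sh(\mathcal G(\mathbb D),J')\to\Sh(\mathcal C,J)$, exhibiting $\Sh(\mathcal G(\mathbb D),J')$ as a relative topos over $\Sh(\mathcal C,J)$ (here one uses that $(\mathcal G(\mathbb D),J')$ is small-generated, which belongs to the standing hypotheses on relative sites). By definition of the Giraud topology, $p_{\mathbb D}$ is a comorphism of sites for $(J_{\mathbb D},J)$; that is, for every object $x$ of $\mathcal G(\mathbb D)$ and every $J$-covering sieve $R$ on $p_{\mathbb D}(x)$ there is a $J_{\mathbb D}$-covering sieve $S$ on $x$ with $p_{\mathbb D}(S)\subseteq R$. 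Since $J'\supseteq J_{\mathbb D}$, every such $S$ is a fortiori $J'$-covering, so the covering-lifting property is inherited verbatim by $J'$. Hence enlarging the domain topology above $J_{\mathbb D}$ preserves the comorphism property, and the first assertion follows.

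\textbf{Converse direction.} Given $f\colon\mathcal E\to\Sh(\mathcal C,J)$, set $\ell=f^{*}\circ l_{J}\colon\mathcal C\to\mathcal E$. The idea is to build a $\mathcal C$-indexed category $\mathbb D$ out of $\mathcal E$ and $\ell$ whose Grothendieck construction provides a site of definition for $\mathcal E$ sitting over $(\mathcal C,J)$ via $p_{\mathbb D}$. The natural candidate takes $\mathbb D(c)$ to be (an essentially small, separating version of) the slice $\mathcal E/\ell(c)$, with reindexing along $u\colon c'\to c$ given by pullback along $\ell(u)$; concretely, $\mathcal G(\mathbb D)$ has objects the pairs $(c,[E\to\ell(c)])$ and $p_{\mathbb D}$ is the projection to $\mathcal C$. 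To retain small-generation one fixes a small separating full subcategory $\mathcal A\subseteq\mathcal E$ and restricts the fibres to morphisms with domain in $\mathcal A$. One then equips $\mathcal G(\mathbb D)$ with a Grothendieck topology $J'$, containing $J_{\mathbb D}$, designed so that $\mathcal G(\mathbb D)$ becomes a site of definition for $\mathcal E$; the inclusion $J'\supseteq J_{\mathbb D}$ makes $p_{\mathbb D}\colon(\mathcal G(\mathbb D),J')\to(\mathcal C,J)$ a relative site by the monotonicity observed above.

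It then remains to identify $C_{p_{\mathbb D}}$ with $f$. First one establishes an equivalence $\Sh(\mathcal G(\mathbb D),J')\simeq\mathcal E$ by a Comparison-Lemma argument applied to the projection $q\colon\mathcal G(\mathbb D)\to\mathcal E$, $(c,[E\to\ell(c)])\mapsto E$, exploiting the separating property of $\mathcal A$ and the definition of $J'$ (and appealing to the denseness results recalled in the preliminaries). Under this equivalence the goal is to recognize the inverse image $C_{p_{\mathbb D}}^{*}$ — which, $p_{\mathbb D}$ being a comorphism of sites, is computed as restriction along $p_{\mathbb D}$, sending a sheaf $P$ to the sheaf $(c,[E\to\ell(c)])\mapsto P(c)$ — as the functor $f^{*}$, the key inputs being the relation $f^{*}\circ l_{J}=\ell$ and the preservation of finite limits and colimits by $f^{*}$. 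Granting this identification, uniqueness of adjoints yields $C_{p_{\mathbb D}}\cong f$ over $\Sh(\mathcal C,J)$, which is the required conclusion.

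The main obstacle is precisely this last identification: choosing the topology $J'$ so that it \emph{simultaneously} contains the Giraud topology $J_{\mathbb D}$ and presents $\mathcal E$ as $\Sh(\mathcal G(\mathbb D),J')$, and then checking that the comorphism $p_{\mathbb D}$ induces exactly $f$ rather than merely some geometric morphism over $\Sh(\mathcal C,J)$. The subtlety is that the most naive topology on $\mathcal G(\mathbb D)$ need not do both at once, so the art lies in pinning down the correct $J'$; the attendant delicate points are the bookkeeping of the structure maps $E\to\ell(c)$ when comparing $C_{p_{\mathbb D}}^{*}$ with $f^{*}$, and the control of the size reduction to $\mathcal A$ so that the Comparison Lemma genuinely applies. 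By contrast, once the equivalence $\Sh(\mathcal G(\mathbb D),J')\simeq\mathcal E$ is secured, the compatibility of the two structure morphisms should reduce to a comparatively formal unwinding of the restriction formula for $C_{p_{\mathbb D}}^{*}$.
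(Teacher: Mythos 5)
Your forward direction is correct and is essentially the intended argument: being a comorphism of sites is a cover-lifting condition which only asks for the \emph{existence} of suitable covering sieves in the domain topology, so it is inherited verbatim when the Giraud topology $J_{\mathbb D}$ (the smallest topology with this property) is enlarged to any $J'\supseteq J_{\mathbb D}$, and the general theory of comorphisms of small-generated sites then produces $C_{p_{\mathbb D}}$. Your candidate for the converse is also the right one: the indexed category $c\mapsto {\cal E}\slash \ell(c)$, with $\ell=f^{\ast}\circ l_{J}$ and reindexing by pullback, has Grothendieck construction the comma category $(1_{\cal E}\downarrow \ell)$, which is exactly the canonical relative site that the statement points to (section \ref{sec:canonicalrelativesite}, applied to the morphism of sites $A=f^{\ast}\circ l_{J}:({\cal C},J)\to ({\cal E},J^{\textup{can}}_{\cal E})$).

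The converse as you present it, however, has genuine gaps. First and mainly, you never define $J'$; you explicitly defer the choice (``the art lies in pinning down the correct $J'$''), but that choice \emph{is} the mathematical content of this direction. The answer is concrete: $J'$ is the topology $J^{K}_{A}$ (with $K=J^{\textup{can}}_{\cal E}$) whose covering families are exactly those sent by the projection $\pi_{\cal E}:(1_{\cal E}\downarrow \ell)\to {\cal E}$ to epimorphic families. With this choice, $J'\supseteq J_{\mathbb D}$ follows because $\ell$ sends $J$-covering families to epimorphic families and epimorphic families in a topos are stable under pullback (so covers of $c$ lift to covers of any $(E,c,\alpha)$), and the identification $C_{\pi_{A}}\cong \Sh(A)\cong f$ is precisely the cited Theorem 3.16 of \cite{denseness}; it is not re-derived by hand from a formula for the inverse image. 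Second, the formula you do propose for that inverse image is false: for a comorphism of sites $G$ one has $C^{\ast}_{G}\circ a_{J}\cong a_{K}\circ G^{\ast}$ (where $G^{\ast}$ is precomposition), i.e.\ $C^{\ast}_{G}$ is the \emph{sheafification} of the restriction, and the restriction alone does not land in sheaves. For instance, for ${\cal C}=\textbf{1}$ with the trivial topology one gets $(1_{\cal E}\downarrow \ell)\simeq {\cal E}$ with the canonical topology, and the restriction of a set $S$ is the constant presheaf on $\cal E$, which is not a sheaf (its value at the initial object is $S$ rather than a singleton); so the ``comparatively formal unwinding'' at the end of your argument would fail as stated. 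Third, your size reduction --- cutting each fibre down to arrows with domain in a small separating subcategory ${\cal A}\subseteq {\cal E}$ --- breaks the indexed-category structure, since the pullback of such an arrow along $\ell(u)$ need not have domain in $\cal A$; the size issue is instead absorbed by the framework of small-generated (possibly large) sites in which the whole paper operates.
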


A morphism of relative toposes $(f:{\cal F}\to {\cal E})\to (f':{\cal F}' \to {\cal E})$ is a geometric morphism $g:{\cal F}\to {\cal F}'$, together with a natural isomorphism $f'\circ g\cong f$.

\subsection{The canonical relative site of a geometric morphism}\label{sec:canonicalrelativesite}

\begin{defn}
	Let $f:{\cal F}\to {\cal E}$ be a geometric morphism. The \emph{relative topology of $f$} is the Grothendieck topology on the category $(1_{\cal F}\downarrow f^{\ast})$ induced by the canonical topology on $\cal F$ via the projection functor $\pi_{\cal F}:(1_{\cal F}\downarrow f^{\ast}) \to {\cal F}$. 	
\end{defn}	

\begin{thm}[cf. Theorem 3.16 \cite{denseness}]
	Let $f:{\cal F}\to {\cal E}$ be a geometric morphism. Then the canonical projection functor 
	\[
	\pi_{\cal E}:(1_{\cal F}\downarrow f^{\ast}) \to {\cal E}
	\]
	is a comorphism of sites 
	\[
	((1_{\cal F}\downarrow f^{\ast}), J_{f}) \to ({\cal E}, J^{\textup{can}}_{\cal E})
	\]
	such that $f=C_{\pi_{\cal E}}$.
\end{thm}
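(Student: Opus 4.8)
The plan is to prove the two assertions separately, after recognising $(1_{\cal F}\downarrow f^{\ast})$ as the Grothendieck construction ${\cal G}(\mathbb D)$ of the $\cal E$-indexed category $\mathbb D\colon E\mapsto {\cal F}/f^{\ast}E$, so that $\pi_{\cal E}=p_{\mathbb D}$, and recalling that the relative topology $J_f$ has as its covering sieves on $(F,E,\alpha)$ exactly those whose family of $\cal F$-components is jointly epimorphic (equivalently, whose image under $\pi_{\cal F}$ generates a $J^{\textup{can}}_{\cal F}$-cover of $F$); in particular $J_f\supseteq J_{\mathbb D}$, so this is a genuine relative site. I will use repeatedly that $f^{\ast}$ preserves finite limits and all colimits and that the canonical topology of a topos has the jointly epimorphic families as covers. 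For the comorphism property I verify cover-lifting: fixing $(F,E,\alpha)$ and a $J^{\textup{can}}_{\cal E}$-cover $\{v_i\colon E_i\to E\}$, the family $\{f^{\ast}(v_i)\}$ is jointly epimorphic in $\cal F$, and forming the pullbacks $F_i:=F\times_{f^{\ast}E}f^{\ast}E_i$ along $\alpha$ (using universality of colimits in $\cal F$) yields jointly epimorphic projections $\{u_i\colon F_i\to F\}$. The arrows $(u_i,v_i)\colon(F_i,E_i,\alpha_i)\to(F,E,\alpha)$, with $\alpha_i$ the remaining pullback projection, then generate a $J_f$-cover lying over $\{v_i\}$, which is precisely what cover-lifting requires.

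The heart of the matter is the identification $\Sh((1_{\cal F}\downarrow f^{\ast}),J_f)\simeq{\cal F}$, which I obtain through the other projection $\pi_{\cal F}$. This functor is a fibration, the cartesian lift of $u\colon F_0\to F$ at $(F,E,\alpha)$ being $(u,\id_E)\colon(F_0,E,\alpha\circ u)\to(F,E,\alpha)$, and by the very definition of $J_f$ it is a comorphism of sites $((1_{\cal F}\downarrow f^{\ast}),J_f)\to({\cal F},J^{\textup{can}}_{\cal F})$. Crucially, each fibre $(F\downarrow f^{\ast})$ has a terminal object $(1_{\cal E},\, F\to f^{\ast}1_{\cal E}=1_{\cal F})$ which is preserved by reindexing, so $\pi_{\cal F}$ admits a fully faithful right adjoint $s$ with $s(F)=(F,1_{\cal E},!)$; this is exactly the configuration under which the denseness, fullness and faithfulness criteria of \cite{denseness} guarantee that $C_{\pi_{\cal F}}\colon\Sh((1_{\cal F}\downarrow f^{\ast}),J_f)\isorightarrow{\cal F}$ is an equivalence. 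Turning this adjunction into a rigorous equivalence of toposes is the step I expect to be the main obstacle, and it is the point at which I would invoke Theorem 3.16 of \cite{denseness} rather than reprove it from scratch.

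Granting this equivalence, it remains to show $C_{\pi_{\cal E}}\cong f\circ C_{\pi_{\cal F}}$, which I verify on inverse images. Since both $\pi_{\cal E}$ and $\pi_{\cal F}$ are comorphisms, their inverse image functors are computed by precomposition, so for $Q\in{\cal E}$ (represented by $\Hom_{\cal E}(-,Q)$ via ${\cal E}\simeq\Sh({\cal E},J^{\textup{can}}_{\cal E})$) the sheaves $C_{\pi_{\cal E}}^{\ast}(Q)$ and $C_{\pi_{\cal F}}^{\ast}(f^{\ast}Q)$ are
\[
(F,E,\alpha)\mapsto \Hom_{\cal E}(E,Q)\qquad\text{and}\qquad (F,E,\alpha)\mapsto \Hom_{\cal F}(F,f^{\ast}Q),
\]
respectively. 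The assignment $g\mapsto f^{\ast}(g)\circ\alpha$ is a natural transformation between them, and I claim it is a $J_f$-local isomorphism, hence an isomorphism of sheaves. For local surjectivity, an element $h\colon F\to f^{\ast}Q$ restricts along the one-element $J_f$-cover $(\id_F,\mathrm{pr}_E)\colon(F,E\times Q,\langle\alpha,h\rangle)\to(F,E,\alpha)$ (its $\cal F$-component being $\id_F$) to the image of $\mathrm{pr}_Q$; for local injectivity, if $f^{\ast}(g)\circ\alpha=f^{\ast}(g')\circ\alpha$ then $\alpha$ factors through $f^{\ast}\Eq(g,g')$, and $g,g'$ agree after restriction along the one-element cover $(\id_F,e)\colon(F,\Eq(g,g'),\alpha')\to(F,E,\alpha)$. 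This yields $C_{\pi_{\cal E}}^{\ast}\cong C_{\pi_{\cal F}}^{\ast}\circ f^{\ast}=(f\circ C_{\pi_{\cal F}})^{\ast}$, hence $C_{\pi_{\cal E}}\cong f\circ C_{\pi_{\cal F}}$; reading this through the equivalence $C_{\pi_{\cal F}}$ gives $f=C_{\pi_{\cal E}}$, as desired.
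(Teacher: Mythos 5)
The paper itself offers no proof of this statement: it is recalled from \cite{denseness} (hence the ``cf.\ Theorem 3.16'' in the header), so your attempt can only be judged as a self-contained argument. Two of your three steps are correct and well executed. The cover-lifting verification (pulling back a canonical cover of $E$ along $\alpha$ after applying $f^{\ast}$, using that inverse image functors preserve jointly epimorphic families and that such families are pullback-stable in a topos) does show that $\pi_{\cal E}$ is a comorphism of sites $((1_{\cal F}\downarrow f^{\ast}), J_{f}) \to ({\cal E}, J^{\textup{can}}_{\cal E})$. And your closing computation, exhibiting $g\mapsto f^{\ast}(g)\circ\alpha$ as a $J_f$-local isomorphism from the presheaf $(F,E,\alpha)\mapsto \Hom_{\cal E}(E,Q)$ to $(F,E,\alpha)\mapsto\Hom_{\cal F}(F,f^{\ast}Q)$ via the graph trick and the equalizer trick, correctly yields $C_{\pi_{\cal E}}\cong f\circ C_{\pi_{\cal F}}$.

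The genuine gap is the middle step, and it is twofold. First, deferring the equivalence $C_{\pi_{\cal F}}:\Sh((1_{\cal F}\downarrow f^{\ast}),J_f)\to{\cal F}$ to Theorem 3.16 of \cite{denseness} is circular: that theorem \emph{is} the statement you are asked to prove, so the proposal reduces the result to itself precisely at its core. Second, the mechanism you sketch for it would not suffice even if spelled out. The adjunction $\pi_{\cal F}\dashv s$, $s(F)=(F,1_{\cal E},!)$, with $s$ fully faithful, only gives $C_{\pi_{\cal F}}\cong\Sh(s)$ (this is the Proposition 3.14(iii) argument the paper uses elsewhere); but a fully faithful, cover-preserving morphism of sites need not induce an equivalence, and $s$ is emphatically \emph{not} dense: a $J_f$-cover of $(F,E,\alpha)$ by objects in the image of $s$ would require arrows $(u_i,v_i)$ with $v_i:1_{\cal E}\to E$, i.e.\ global points of $E$, which need not exist (take $f$ the identity of $\Sh(X)$, $E$ a sheaf with no global sections, $\alpha=\id_E$). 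So no comparison-lemma-style argument applies to $s$, and the ``fibration with fibrewise terminal objects'' configuration alone never forces an equivalence. What actually makes the equivalence work is that $J_f$ is \emph{induced} along $\pi_{\cal F}$ from $J^{\textup{can}}_{\cal F}$: (i) any two arrows of $(1_{\cal F}\downarrow f^{\ast})$ with the same ${\cal F}$-component are equalized by a $J_f$-covering arrow (your equalizer trick, now applied to the ${\cal E}$-components); (ii) consequently precomposition with $\pi_{\cal F}$ sends objects $G$ of $\cal F$ to $J_f$-sheaves $\Hom_{\cal F}(\pi_{\cal F}(-),G)$, so $C_{\pi_{\cal F}}^{\ast}$ is precomposition with no sheafification needed; (iii) the unit arrows $(\id_F,!):(F,E,\alpha)\to(F,1_{\cal E},!)$ are $J_f$-covering singletons, so restriction along them is bijective on every $J_f$-sheaf $H$, which gives essential surjectivity of $C_{\pi_{\cal F}}^{\ast}$ (take $G=H\circ s$) and, combined with (i) and your graph trick, full faithfulness. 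Equivalently: verify that $\pi_{\cal F}$ itself, not $s$, is a dense morphism of sites (surjective on objects, locally full and locally faithful by the two tricks) and cite the general denseness criteria of \cite{denseness} rather than its Theorem 3.16. With that replacement, your outer two steps assemble into a complete proof.
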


The functor $\pi_{\cal E}:(1_{\cal F}\downarrow f^{\ast}) \to {\cal E}$ is actually a \emph{stack} on $\cal E$, which we call the \emph{canonical stack of $f$}: from an indexed point of view, the corresponding $\cal E$-indexed category ${\mathbb I}_{f}:{\cal E}^{\textup{op}}\to \CAT$ sends any object $E$ of $\cal E$ to the topos ${\cal F}\slash f^{\ast}(E)$ and any arrow $u:E'\to E$ to the pullback functor $u^{\ast}:{\cal F}\slash f^{\ast}(E)\to {\cal F}\slash f^{\ast}(E')$.
 
The comorphism of sites $\pi_{\cal E}:((1_{\cal F}\downarrow f^{\ast}), J_{f}) \to ({\cal E}, J^{\textup{can}}_{\cal E})$ is called the \emph{canonical relative site} of $f$. 

More generally, given a morphism of small-generated sites $A:(\cbicat,J)\to (\dbicat,K)$, the \emph{relative site of $A$} is the site $((1_{\cal D}\downarrow A), J^{K}_{A})$, where $J^{K}_{A}$ is the Grothendieck topology whose covering families are those which are sent by the canonical projection $(1_{\cal D}\downarrow A) \to {\cal D}$ to $K$-covering families, together with the canonical projection functor $\pi_{A}:(1_{\cal D}\downarrow A) \to {\cal C}$, which is a comorphism of sites $((1_{\cal D}\downarrow A), J^{K}_{A}) \to ({\cal C}, J)$.

\begin{thm}[cf. Theorem 3.16 \cite{denseness}]
	Let $A:(\cbicat,J)\rightarrow (\dbicat,K)$ be a morphism of small-generated sites. Then the geometric morphism $\Sh(A)$ induced by $A$ coincides with the structure geometric morphism $C_{\pi_{A}}$ associated with the relative site $\pi_{A}:((1_{\cal D}\downarrow A), J^{K}_{A})\to ({\cal C}, J)$. 
\end{thm}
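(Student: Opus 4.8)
The plan is to view both $\Sh(A)$ and $C_{\pi_A}$ as geometric morphisms with codomain $\Sh({\cal C},J)$ and to reduce the statement to an identification of their \emph{domains} together with a compatibility over $\Sh({\cal C},J)$. Write $q\colon (1_{\cal D}\downarrow A)\to {\cal D}$ for the canonical projection $(D,C,\phi\colon D\to A(C))\mapsto D$, so that there is a canonical $2$-cell $\lambda\colon q\Rightarrow A\circ\pi_A$ with component $\phi$ at $(D,C,\phi)$. The functor $q$ is a fibration whose fibre over $D$ is the comma category $(D\downarrow A)$, and, since $J^{K}_{A}$ is by definition the topology lifted from $K$ along $q$, the cartesian lifts $(r,\id_C)\colon (D'',C,\phi\circ r)\to(D,C,\phi)$ show at once that $q$ is a comorphism of sites $((1_{\cal D}\downarrow A),J^{K}_{A})\to ({\cal D},K)$.

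Step 1 (identifying the domains). I would first show that $q$ induces an equivalence $C_{q}\colon \Sh((1_{\cal D}\downarrow A),J^{K}_{A})\isorightarrow \Sh({\cal D},K)$. This is exactly where the hypothesis that $A$ is a \emph{morphism} of sites enters: covering-flatness of $A$ is precisely the statement that the fibres $(D\downarrow A)$ of $q$ are $K$-filtered, and, since $J^{K}_{A}$ is lifted from $K$ along $q$, the denseness results of \cite{denseness} then guarantee that $C_{q}$ is an equivalence. In the special case where ${\cal C}={\cal E}$ and ${\cal D}={\cal F}$ carry their canonical topologies and $A=f^{\ast}$, this reduces to the preceding theorem.

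Step 2 (compatibility over $\Sh({\cal C},J)$). It then remains to check that, under $C_q$, the morphism $C_{\pi_A}$ corresponds to $\Sh(A)$, i.e. that $C_{\pi_A}\cong \Sh(A)\circ C_q$. Since every geometric morphism into $\Sh({\cal C},J)$ is determined, up to canonical isomorphism, by the action of its inverse image on the canonical generators $l_{J}(C)$, $C\in{\cal C}$, I would compare the two inverse images there. On one side $\Sh(A)^{\ast}(l_{J}(C))\cong l_{K}(A(C))$ by the defining property of $\Sh(A)$, so $(\Sh(A)\circ C_q)^{\ast}(l_J(C))\cong a_{J^{K}_{A}}(l_{K}(A(C))\circ q^{\op})$; on the other side $C_{\pi_A}^{\ast}(l_{J}(C))\cong a_{J^{K}_{A}}(l_{J}(C)\circ \pi_A^{\op})$. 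The $2$-cell $\lambda$ together with functoriality of $A$ induces a morphism of presheaves $l_{J}(C)\circ \pi_A^{\op}\to l_{K}(A(C))\circ q^{\op}$ on $(1_{\cal D}\downarrow A)$, and I would verify that it is a $J^{K}_{A}$-local isomorphism, so that it becomes invertible after applying $a_{J^{K}_{A}}$ and matches the two inverse images; naturality in $C$ then yields the required isomorphism of geometric morphisms over $\Sh({\cal C},J)$.

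The main obstacle is Step 1 — and, closely related, the verification in Step 2 that the comparison map is a $J^{K}_{A}$-local isomorphism; both rest on turning the abstract morphism-of-sites conditions on $A$ (covering-flatness and cover-preservation) into the concrete statement that the fibres $(D\downarrow A)$ are $K$-filtered and interact correctly with $K$-covers. An alternative, perhaps cleaner, route bypasses the independent analysis of Step 1 by reducing directly to the preceding theorem applied to $f:=\Sh(A)$: one introduces the comparison functor $\widetilde{l}\colon (1_{\cal D}\downarrow A)\to (1_{\Sh({\cal D},K)}\downarrow f^{\ast})$, $(D,C,\phi)\mapsto (l_{K}(D),l_{J}(C),l_{K}(\phi))$ — well defined because $f^{\ast}\circ l_{J}\cong l_{K}\circ A$ — which commutes with the projections to the base, shows that $\widetilde{l}$ is weakly dense (a consequence of the denseness of $l_{J}$ and $l_{K}$) so that it induces an equivalence intertwining $\pi_A$ with $\pi_{\cal E}$, and then invokes the already-established identity $f=C_{\pi_{\cal E}}$ to conclude $C_{\pi_A}\cong \Sh(A)$.
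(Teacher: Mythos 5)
You should first be aware that the paper does not prove this statement at all: it is recalled as a preliminary, with the citation ``cf.\ Theorem 3.16 \cite{denseness}'', so there is no internal proof to compare your argument against; it has to be judged on its own merits and against the toolkit the paper actually develops. Your architecture is the right one: since $\Sh(A)$ and $C_{\pi_A}$ have a priori different domains, the theorem implicitly asserts an equivalence $\Sh((1_{\cal D}\downarrow A), J^{K}_{A})\simeq \Sh({\cal D},K)$ together with a compatibility over $\Sh({\cal C},J)$, and your Steps 1 and 2 are exactly that decomposition. You also locate correctly where the morphism-of-sites hypotheses enter (covering-flatness as local cofilteredness of the fibres $(D\downarrow A)$; cover-preservation and flatness for $\Sh(A)^{\ast}l_{J}\cong l_{K}A$), and reducing Step 2 to the generators $l_{J}(C)$ via Diaconescu, with naturality in $C$, is legitimate.

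The genuine gap is that both load-bearing verifications --- that $C_{q}$ is an equivalence (Step 1), and that your comparison map becomes invertible after sheafification (Step 2) --- are discharged by appealing to ``the denseness results of \cite{denseness}''. But the statement you are proving \emph{is} Theorem 3.16 of \cite{denseness}: citing that paper wholesale at precisely these two points makes the argument circular, unless you name specific independent results and check their hypotheses. A self-contained proof has to do the work; it is doable, and your own remarks point the way: local surjectivity of $l_{J}(C)\circ \pi_{A}^{\textup{op}}\to l_{K}(A(C))\circ q^{\textup{op}}$ at a stage $(D,C',\phi')$ unwinds to the assertion that any $\psi:D\to A(C)$ admits a $K$-cover $\{r_i:D_i\to D\}$ over which the pair $(\phi'\circ r_i,\psi\circ r_i)$ factors through the $A$-image of a span $C'\leftarrow C_i\to C$, which is covering-flatness applied to the discrete diagram $\{C',C\}$; local injectivity is covering-flatness for parallel pairs; and Step 1 requires running an actual comparison-lemma-style argument rather than invoking one. (Also, strictly speaking your comparison map is first defined on $y_{\cal C}(C)\circ \pi_{A}^{\textup{op}}$; passing to $l_{J}(C)\circ\pi_{A}^{\textup{op}}$ uses that precomposition along the comorphism $\pi_{A}$ sends $J$-local isomorphisms to $J^{K}_{A}$-local isomorphisms.) Within the logic of this paper, your closing ``alternative route'' is actually the preferable one: the preceding theorem $f=C_{\pi_{\cal E}}$ is available as a recalled result, the comparison functor $(D,C,\phi)\mapsto (l_{K}(D),l_{J}(C),l_{K}(\phi))$ is the morphism-of-sites analogue of the dense functor $\eta_{G}$ of Proposition \ref{propbidense}, covering-flatness of $A$ is then absorbed into the existence of $\Sh(A)$, and the denseness of that comparison functor genuinely does reduce to the denseness of $l_{J}$ and $l_{K}$ (every arrow $l_{K}(D)\to l_{K}(A(C))$ being $K$-locally of the form $l_{K}(\phi)$); so that route avoids the circularity, whereas your primary route, as written, does not.
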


In the context of a localic morphism $f$, it will also be convenient to consider the sub-site $((1_{\cal F}\downarrow^{\Sub} f^{\ast}), J_{f}|_{(1_{\cal F}\downarrow^{\Sub} f^{\ast})}$, called the \emph{reduced relative site} of $f$ (cf. Definition \ref{defreducedrelativesite} below) given by the full subcategory of $(1_{\cal F}\downarrow f^{\ast})$ on the objects of the form $(F, E, \alpha:F\to f^{\ast}(E))$, where $\alpha$ is a monomorphism, with the induced Grothendieck topology. The following result is relevant for the study of morphisms between relative localic toposes.

\begin{prop}\label{prop:preservationmono}
	Let $g:[f:{\cal F}\to {\cal E}]\to [f':{\cal F}' \to {\cal E}]$ be a relative geometric morphism. Then the functor
	\[
	g^{\ast}_{\textup{fib}}:((1_{{\cal F}'}\downarrow f'^{\ast}), J_{f'}) \to ((1_{\cal F}\downarrow f^{\ast}), J_{f})
	\]
	restricts to a morphism of sites
	\[
	((1_{{\cal F}'}\downarrow^{\textup{Sub}} f'^{\ast}), J_{f'}|_{(1_{{\cal F}'}\downarrow^{\textup{Sub}} f'^{\ast})}), ((1_{\cal F}\downarrow^{\textup{Sub}} f^{\ast}), J_{f}|_{((1_{\cal F}\downarrow^{\textup{Sub}} f^{\ast})})
	\]
\end{prop}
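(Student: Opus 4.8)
The plan is to split the verification into two parts: first, that $g^{\ast}_{\textup{fib}}$ carries objects with monomorphic structure map to objects with monomorphic structure map, so that it genuinely restricts to the full subcategories $(1_{\mathcal{F}'}\downarrow^{\textup{Sub}} f'^{\ast})$ and $(1_{\mathcal{F}}\downarrow^{\textup{Sub}} f^{\ast})$; and second, that this restriction is a morphism of sites for the induced topologies.

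Recall that $g^{\ast}_{\textup{fib}}$ is the functor induced by the inverse image $g^{\ast}:\mathcal{F}'\to\mathcal{F}$ together with the canonical natural isomorphism $\theta:g^{\ast}\circ f'^{\ast}\cong f^{\ast}$ arising from the structural isomorphism $f'\circ g\cong f$ of the relative geometric morphism $g$; concretely, it sends an object $(F',E,\alpha':F'\to f'^{\ast}(E))$ to $(g^{\ast}(F'),E,\theta_E\circ g^{\ast}(\alpha'))$. Since $g^{\ast}$ is the inverse image part of a geometric morphism it is left exact, hence preserves monomorphisms; as $\theta_E$ is an isomorphism, the composite $\theta_E\circ g^{\ast}(\alpha')$ is a monomorphism whenever $\alpha'$ is. Thus $g^{\ast}_{\textup{fib}}$ maps $(1_{\mathcal{F}'}\downarrow^{\textup{Sub}} f'^{\ast})$ into $(1_{\mathcal{F}}\downarrow^{\textup{Sub}} f^{\ast})$, and because these are full subcategories the restriction is a well-defined functor between them.

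For the second part I would first observe that each sub-site is finitely complete: the comma category $(1_{\mathcal{F}}\downarrow f^{\ast})$ has finite limits, computed componentwise using the left exactness of $f^{\ast}$, and the full subcategory on monomorphic structure maps is closed under them. The restriction then preserves these limits, since $g^{\ast}_{\textup{fib}}$ acts as $g^{\ast}$ on the $\mathcal{F}$-component and as the identity on the $\mathcal{E}$-component, both left exact, so it is left exact and its restriction to the limit-closed subcategories is left exact as well; hence the restriction is flat. Cover-preservation then transfers from the ambient functor: by definition $J_f$ (and its restriction) declares a family covering exactly when its image under the projection $\pi_{\mathcal{F}}$ to $\mathcal{F}$ is epimorphic, and since $g^{\ast}$ preserves epimorphic families, $g^{\ast}_{\textup{fib}}$ sends $J_{f'}$-covering families to $J_f$-covering families; by the first part these images lie in the target sub-site, so the restriction is cover-preserving for the induced topologies. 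As a flat, cover-preserving functor whose domain site is finitely complete is a morphism of sites, the claim follows.

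The step I expect to require the most care is confirming that the subcategory on monomorphic structure maps is closed under finite limits and that these agree with the limits of the ambient comma category — in particular that the map between pullbacks induced by a commuting square of monomorphisms is again a monomorphism — since the reduction of \emph{morphism of sites} to \emph{flat and cover-preserving} hinges on the domain sub-site being genuinely finitely complete. The remaining ingredients, namely preservation of monomorphisms, left exactness, and cover-preservation, all follow directly from $g^{\ast}$ being the inverse image of a geometric morphism.
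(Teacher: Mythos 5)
Your proposal is correct, and its first half coincides in substance with the paper's entire proof: the paper also reduces everything to preservation of monomorphisms, though it packages this differently, by regarding an object $(F',E',\alpha)$ with monic $\alpha$ as a monic arrow $(\alpha,1)$ into the \emph{identity object} $(f'^{\ast}(E'),E',1_{f'^{\ast}(E')})$ of the comma category, and then using that $g^{\ast}_{\textup{fib}}$ sends monic arrows to monic arrows and identity objects to (objects isomorphic to) identity objects; your direct computation that the image structure map is $\theta_E\circ g^{\ast}(\alpha')$, monic because $g^{\ast}$ is left exact, is the same fact stated without this detour. Where you genuinely go beyond the paper is the second half: the paper's proof stops once the functor is shown to restrict, leaving implicit that the restriction is still a morphism of sites, whereas you verify this explicitly via the closure of the sub-comma-categories under finite limits (which the paper only records later, in Proposition \ref{prop:densenessrelsitelocalicmorphism}, and there under a localic hypothesis that is in fact not needed for the limit-closure part), the left exactness of the restricted functor, and cover preservation through the projection to ${\cal F}$ using that inverse images preserve jointly epimorphic families. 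This buys a self-contained argument that does not presuppose the meta-fact that morphisms of sites restrict along suitable sub-sites, at the cost of some routine limit-checking; your flagged worry (that limits of monomorphic structure maps are again monomorphic) is indeed the only point needing care, and it holds since limits of diagrams of monomorphisms induce monomorphisms on limit objects.
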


\begin{proof}
	It suffices to observe that an object $(F', E', \alpha:F'\to f'^{\ast}(E'))$ can be regarded as an arrow $(\alpha, 1):(f'^{\ast}(E'), E', 1_{f'^{\ast}(E')})$ in the category $(1_{{\cal F}'}\downarrow f'^{\ast})$ (and similarly for the relative category of the morphism $f$). Now, if $\alpha$ is monic then the corresponding arrow $(\alpha, 1)$ if also monic, and hence is sent by the functor $g^{\ast}_{\textup{fib}}$ to a monic arrow towards the identity, which in turn identifies with an object of the subcategory $(1_{\cal F} \downarrow^{\textup{Sub}} f^{\ast})$, as required. 
\end{proof}

\subsection{The canonical functor for relative toposes}

A relative topos is in particular, by definition, a topos of the form $C_{G}:\Sh({\cal D}, K)\to \Sh({\cal C}, J)$, where $G:({\cal D}, K)\to \Sh({\cal C}, J)$ is a comorphism of sites.

Note that, any \ac absolute' topos, that is, any Grothendieck topos $\Sh({\cal D}, K)$ considered as a topos over $\Set$, can be naturally represented in this way, by taking as site of definition for $\Set$ the site $(\textbf{1}, T)$ consisting of the one-object, one-arrow category $\textbf{1}$ and the trivial Grothendieck topology $T$ on it, and as comorphism of sites $({\cal D}, K) \to (\textbf{1}, T)$ the unique functor ${\cal D} \to \textbf{1}$.

It is natural to wonder what is the analogue, for a general relative topos $C_{G}$, of the canonical functor ${\cal D}\to \Sh({\cal D}, K)$ for \ac absolute' topos $\Sh({\cal D}, K) \to \Set$. We naturally expect that such a canonical functor should be compatible with the canonical functors to the base topos $\Sh({\cal C}, K)$. Indeed, the functor that we shall define will satisfy this requirement.

\begin{prop}\label{propbidense}
	Let $G:({\cal D}, K)\to ({\cal C}, J)$ be a comorphism of sites. There is a dense bimorphism of sites 
	\[
	\eta_{G}:({\cal D}, K) \to ((1_{\Sh({\cal D}, K)} \downarrow C_{G}^{\ast}), J_{C_{G}})
	\]
	 mapping an object $D$ of $\cal D$ to the triplet $(l_K(D), l_J(G(D)), x_{D}:l_K(D)\to C_{G}^{\ast}(l_J(G(D))))\cong \sheafify_{K}(\cbicat(G(-), G(D)))$, where $x_{D}$ is the image under $\sheafify_{K}$ of the arrow $y_{\cal D}(D)\to \cbicat(G(-), G(D))$ corresponding to the identity on $G(D)$ via the Yoneda embedding. The action of $\eta_{G}$ on arrows is straightforward.
	 
	 \[\begin{tikzcd}
	 	{({\cal D}, K)} &&& {((1_{\textup{\bf Sh}({\cal D}, K)} \downarrow C_{G}^{\ast}), J_{C_{G}})} \\
	 	\\
	 	& {({\cal C}, J)} & {(\textup{\bf Sh}({\cal C}, J), J^{\textup{can}}_{\textup{\bf Sh}({\cal C}, J)})}
	 	\arrow["{\eta_{G}}", from=1-1, to=1-4]
	 	\arrow["{\pi_{\textup{\bf Sh}({\cal C}, J)}}", from=1-4, to=3-3]
	 	\arrow["{l_{J}}", from=3-2, to=3-3]
	 	\arrow["G"', from=1-1, to=3-2]
	 \end{tikzcd}\]

	If $G$ is faithful then $\eta_{G}$ takes values in the subcategory $(1_{\Sh({\cal D}, K)} \downarrow^{\textup{Sub}} C_{G}^{\ast})$.  
\end{prop}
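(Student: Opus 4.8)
The claim to establish is that when $G$ is faithful, the structure morphism $x_D$ of the object $\eta_G(D)$ is monic; since the subcategory $(1_{\Sh({\cal D},K)}\downarrow^{\textup{Sub}} C_G^{\ast})$ is by definition the full subcategory on those objects $(F,E,\alpha\colon F\to C_G^{\ast}(E))$ whose third component $\alpha$ is a monomorphism, this is exactly what is needed. The plan is to pull $x_D$ back to the presheaf morphism of which it is the sheafification, show that faithfulness of $G$ makes that presheaf morphism monic, and then transport this fact across the left-exact functor $\sheafify_K$.

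First I would record the presheaf morphism in question. By the description of $\eta_G$ given in the proposition, the arrow $x_D$ is the image under $\sheafify_K$ of the natural transformation
\[
\theta_D : y_{\cal D}(D) = {\cal D}(-,D)\longrightarrow \cbicat(G(-),G(D))
\]
whose component at an object $D'$ of $\cal D$ sends a morphism $f\colon D'\to D$ to $G(f)\colon G(D')\to G(D)$. I would next check that $\theta_D$ is a monomorphism of presheaves. Monomorphisms in $[{\cal D}^{\textup{op}},\Set]$ are detected componentwise, so it suffices that each component ${\cal D}(D',D)\to \cbicat(G(D'),G(D))$, $f\mapsto G(f)$, be injective; but this is precisely the statement that $G$ is faithful.

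It then remains to apply $\sheafify_K$. Since the associated sheaf functor $\sheafify_K\colon[{\cal D}^{\textup{op}},\Set]\to \Sh({\cal D},K)$ is left exact, it preserves finite limits and in particular monomorphisms, so $x_D=\sheafify_K(\theta_D)$ is a monomorphism $l_K(D)\to C_G^{\ast}(l_J(G(D)))$, where I use the identification $C_G^{\ast}(l_J(G(D)))\cong \sheafify_K(\cbicat(G(-),G(D)))$ furnished by the main part of the proposition. Hence $\eta_G(D)$ lies in $(1_{\Sh({\cal D},K)}\downarrow^{\textup{Sub}} C_G^{\ast})$ for every object $D$, and since this subcategory is full, $\eta_G$ factors through it as claimed.

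I do not anticipate a genuine obstacle here: the whole argument rests on the elementary chain \emph{faithful $\Rightarrow$ componentwise injective $\Rightarrow$ monic presheaf morphism}, together with the left-exactness of sheafification. The only point requiring care is the bookkeeping identifying $x_D$ with $\sheafify_K(\theta_D)$ compatibly with the isomorphism $C_G^{\ast}(l_J(G(D)))\cong \sheafify_K(\cbicat(G(-),G(D)))$, but this compatibility is already part of the content of the proposition's main statement, which we may assume.
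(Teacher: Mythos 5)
Your argument for the final claim is correct, but it does not prove the proposition: the main assertion --- that $\eta_{G}$ is a \emph{dense bimorphism of sites} $({\cal D}, K) \to ((1_{\Sh({\cal D}, K)} \downarrow C_{G}^{\ast}), J_{C_{G}})$ --- is nowhere addressed. You explicitly assume it (``furnished by the main part of the proposition, which we may assume''), whereas it is the bulk of what has to be established, and it is where the paper spends almost all of its effort: the paper factors $\eta_{G}$ as the composite of the functor $j_{G}:({\cal D}, K) \to ((G\downarrow 1_{\cal C}), \overline{K})$ of Theorem 3.18 of \cite{denseness} with a functor $\xi_{G}: (G\downarrow 1_{\cal C}) \to (1_{\Sh({\cal D}, K)} \downarrow C_{G}^{\ast})$, the latter being itself the composite of the functor $z_{G}$ of Theorem 3.20(ii) of \cite{denseness} with the functor induced by sheafification, and then concludes by stability of dense bimorphisms of sites under composition. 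Without this (or some substitute argument showing cover-preservation, denseness, etc.), your proof establishes only that \emph{if} $\eta_{G}$ is the functor described in the statement, then in the faithful case it lands in $(1_{\Sh({\cal D}, K)} \downarrow^{\textup{Sub}} C_{G}^{\ast})$; the existence and site-theoretic properties of $\eta_{G}$ remain unproved.

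For the part you did prove, your route is valid and genuinely different from --- indeed more elementary than --- the paper's. The paper invokes Lemma 2.1(i) of \cite{denseness}, a characterization of those arrows of presheaves whose \emph{sheafification} is monic: writing $x_{D}=\sheafify_{K}(y_{D})$, it reduces monicity of $x_{D}$ to the $K$-covering property of the equalizing sieves $S_{x,x'}$ for pairs $x,x'$ with $y_{D}\circ x=y_{D}\circ x'$, and then uses faithfulness to show these sieves are maximal. You instead observe that faithfulness of $G$ says exactly that each component $f\mapsto G(f)$ of $\theta_{D}:y_{\cal D}(D)\to \cbicat(G(-),G(D))$ is injective, so $\theta_{D}$ is already a monomorphism of presheaves, and then left exactness of $\sheafify_{K}$ transports this to $x_{D}$. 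This avoids the external lemma entirely; the paper's criterion is the right tool in the general situation where the presheaf-level arrow need not be monic but its sheafification is, a generality that is simply not needed under the faithfulness hypothesis.
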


\begin{proof}
The functor
\[
\xi_{G}: (G\downarrow 1_{\cal C}) \to (1_{\Sh({\cal D}, K)} \downarrow C_{G}^{\ast})
\]
sending an object $(d, c, \alpha:G(d)\to c)$ of the category $(G\downarrow 1_{\cal C})$ to the object $(l'(d), l(c), l'(d)\to C_{G}^{\ast}(l(c))\cong a_{K}(G^{\ast}(y_{\cal D}(d))))=a_{K}(\Hom_{\cal C}(G(-), c))$ determined by it via the Yoneda lemma, is a dense bimorphism of sites
$((G\downarrow 1_{\cal C}), \overline{K}) \to ((1_{\Sh({\cal D}, K)} \downarrow C_{G}^{\ast}), J_{C_{G}})$. Indeed, it is the composite of the functor 
	\[
	z_{G}:(G\downarrow 1_{\cal C}) \to (1_{\hat{\cal D}}\downarrow m_{G})
	\]	 
of Theorem 3.20(ii) \cite{denseness} with the functor $(1_{\hat{\cal D}}\downarrow m_{G}) \to  (1_{\Sh({\cal D}, K)} \downarrow C_{G}^{\ast})$ induced by the sheafication functor $\hat{\cal D}\to \Sh({\cal D}, K)$.
				
Now, the functor $\eta_{G}$ is the composite of $\xi_{G}$ with the functor $j_{G}:({\cal D}, K) \to ((G\downarrow 1_{\cal C}), \overline{K})$ of Theorem 3.18 \cite{denseness}.
	
Therefore, it is a dense bimorphism of sites $({\cal D}, K) \to ((1_{\Sh({\cal D}, K)} \downarrow C_{G}^{\ast}), J_{C_{G}})$,	since composition of dense bimorphisms of sites are bimorphisms of sites.
	
As per the second statement, we can use the characterization of arrows such that their sheafification is monomorphic provided by Lemma 2.1(i) of \cite{denseness}. Call $y_D:y_{\cal D}(D)\to \cbicat(G(-), G(D))$ the arrow such that $\sheafify_K(y_D)=x_D$: then $x_D$ is monic if and only if for every object $E$ of $\dbicat$ and every pair of arrows $x,x':E\rightrightarrows D$ such that $y_D\circ x=y_D\circ x'$, the sieve $S_{x,x'}=\{f:\dom(f)\to E\ |\ x\circ f=x'\circ f\}$ is $K$-covering. Notice however that the condition $y_D\circ x=y_D\circ x'$ is equivalent via the chain of natural equivalences
	\begin{align*}
		[\dbicat\op,\Set]\left(y_\dbicat(E), C_G^*l_J(G(D))\right)&\simeq [\cbicat\op,\Set]\left(y_\cbicat(G(E)), y_\cbicat(G(D))\right)\\
		&\simeq \cbicat(G(E),G(D))
	\end{align*}	
to the equality $G(x)=G(x')$. If $G$ is faithful, this implies $x=x'$, whence the sieve $S_{x,x'}$ is maximal and \textit{a fortiori} $K$-covering.
\end{proof}

\section{Classifying morphisms of relative toposes}\label{sec:relativeDiaconescu}

In this section we shall establish a classification theorem generalizing Giraud's result stating the universal property of the classifying topos of a stack.

Given a cartesian category $\cal C$, we shall say that a pseudofunctor ${\mathbb D}:{\cal C}^{\textup{op}} \to \CAT$ is \emph{cartesian} if it takes values in cartesian categories and its transition morphisms preserve finite limits. The category ${\cal G}({\mathbb D})$ thus has finite limits, which are preserved by the canonical projection functor to $\cal C$ (cf. Proposition 2.4.2 \cite{CaramelloZanfa}).

\begin{lemma}\label{lemma:fib_with_terminals_has_radj}
	Consider a pseudofunctor $\dcat:\cbicat\op\rightarrow\CAT$ such that every fibre has, and every transition morphism preserves, terminal objects: then $p_\dcat:\gbicat(\dcat)\rightarrow \cbicat$ has a right adjoint $\tau_\dcat:\cbicat\rightarrow\gbicat(\dcat)$, defined on objects as
	\[
	\tau_\dcat(X):=(X,1_{\dcat(X)}).
	\]
\end{lemma}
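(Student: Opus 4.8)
The plan is to verify directly that $\tau_\dcat$ is right adjoint to $p_\dcat$ by exhibiting, for every object $(X,A)$ of $\gbicat(\dcat)$ and every object $Y$ of $\cbicat$, a natural bijection
\[
\gbicat(\dcat)\big((X,A),\tau_\dcat(Y)\big)\;\cong\;\cbicat(X,Y).
\]
Recall that, in the Grothendieck construction, a morphism $(X,A)\to(Y,B)$ is a pair $(f,\alpha)$ consisting of an arrow $f:X\to Y$ of $\cbicat$ together with an arrow $\alpha:A\to\dcat(f)(B)$ in the fibre $\dcat(X)$, and that $p_\dcat$ reads off the first component. I would take the candidate bijection to be precisely the assignment $(f,\alpha)\mapsto f$.

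The key step is to show this map is a bijection, and here is where the hypothesis enters. Taking $B=1_{\dcat(Y)}$, the fibre target of $\alpha$ is $\dcat(f)(1_{\dcat(Y)})$; since every transition morphism preserves terminal objects, $\dcat(f)(1_{\dcat(Y)})$ is a terminal object of $\dcat(X)$. Consequently there is exactly one arrow $\alpha:A\to\dcat(f)(1_{\dcat(Y)})$ for each $f$, so the fibre component is entirely forced by $f$ and the map $(f,\alpha)\mapsto f$ is a bijection onto $\cbicat(X,Y)$. Equivalently, $\tau_\dcat(Y)=(Y,1_{\dcat(Y)})$, equipped with the identity $1_Y:p_\dcat(\tau_\dcat(Y))=Y\to Y$ as counit, is a terminal object of the comma category $\comma{p_\dcat}{Y}$, which is exactly the universal property characterizing a right adjoint to $p_\dcat$.

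It then remains to promote this to a functor and to check naturality. On an arrow $h:X\to X'$ of $\cbicat$ I would define $\tau_\dcat(h):=(h,!)$, where $!:1_{\dcat(X)}\to\dcat(h)(1_{\dcat(X')})$ is the unique arrow into the (terminal) object $\dcat(h)(1_{\dcat(X')})$; functoriality of $\tau_\dcat$, as well as naturality of the bijection above in both $(X,A)$ and $Y$, follow automatically, since any two parallel arrows of $\gbicat(\dcat)$ whose codomain is of the form $\tau_\dcat(Y)$ and which have equal $\cbicat$-components must coincide, their fibre components both being maps into a terminal object. This same observation is what I expect to be the only genuine point to watch: composition in $\gbicat(\dcat)$ involves the pseudofunctoriality (composition and unit) constraints of $\dcat$, and one might worry these obstruct the strict identity $p_\dcat\circ\tau_\dcat=\id_\cbicat$ and the triangle equations; but because all the relevant targets are terminal, every such constraint is absorbed by the uniqueness of maps into terminal objects, so no coherence computation is actually needed.
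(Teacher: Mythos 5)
Your proof is correct and takes essentially the same approach as the paper: both observe that the fibre component of any arrow into $(Y,1_{\dcat(Y)})$ is unique because $\dcat(f)(1_{\dcat(Y)})$ is terminal, yielding a bijection $\gbicat(\dcat)\big((X,A),(Y,1_{\dcat(Y)})\big)\cong\cbicat(X,Y)$ natural in both variables. Your added remarks (the comma-category/terminal-object reformulation, and the point that pseudofunctoriality coherence is absorbed by terminality) are elaborations of that same argument rather than a different route.
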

\begin{proof}
	Given an arrow $(y,a):(Y,V)\rightarrow (X,1_{\dcat(X)})$, the second component $a:V\rightarrow \dcat(y)(1_{\dcat(X)})\simeq 1_{\dcat(Y)}$ is unique. This implies that there is a bijection between $\gbicat(\dcat)((Y,V), (X,1_{\dcat(X)}))$ and $\cbicat(Y,X)$, which is natural in both components and thus shows that $p_\dcat\dashv \tau_\dcat$.    
\end{proof}

\begin{lemma}\label{lemma:fibrewisecartesian_implica_cartesian}
	Consider two cartesian pseudofunctors $\dcat,\ecat:\cbicat\op\rightarrow \CAT$ over a cartesian category $\cbicat$, and a pseudonatural transformation $F:\dcat\Rightarrow \ecat$ such that each component $F_X:\dcat(X)\rightarrow \ecat(X)$ preserves finite limits: then the induced morphism of fibrations $\gbicat(F):\gbicat(\dcat)\rightarrow\gbicat(\ecat)$ preserves finite limits.    
	
	In other words, a morphism of fibrations whose action on fibres preserves finite limits is itself a finite-limit-preserving functor.
\end{lemma}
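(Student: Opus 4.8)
The plan is to use the explicit description of finite limits in $\gbicat(\dcat)$ provided by Proposition 2.4.2 of \cite{CaramelloZanfa} and to verify directly that $\gbicat(F)$ preserves them. Since a functor preserves all finite limits as soon as it preserves the terminal object, binary products and equalizers, it suffices to treat these three cases. Recall that in each case the limit in $\gbicat(\dcat)$ is computed in two stages: one first forms the corresponding limit in the base $\cbicat$, obtaining an object $L$ with its projections, and then one reindexes the fibre components of the diagram along these projections via the pullback functors $\dcat(-)$ and forms the corresponding limit inside the single fibre $\dcat(L)$ (here the hypothesis that the transition morphisms of $\dcat$ preserve finite limits is used to make the fibrewise recipe coherent). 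The two structural facts I would exploit are that $\gbicat(F)$ lies over $\cbicat$, so it leaves the base component untouched and automatically preserves the base part of any limit, and that $F$ is pseudonatural, so that for every $f\colon Y\to X$ there is a natural isomorphism $F_Y\circ\dcat(f)\cong\ecat(f)\circ F_X$ which identifies the action of $\gbicat(F)$ with reindexing in $\ecat$.

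For the terminal object, $\gbicat(F)$ sends $(1_\cbicat,1_{\dcat(1_\cbicat)})$ (cf. Lemma~\ref{lemma:fib_with_terminals_has_radj}) to $(1_\cbicat,F_{1_\cbicat}(1_{\dcat(1_\cbicat)}))$, and since $F_{1_\cbicat}$ preserves finite limits we have $F_{1_\cbicat}(1_{\dcat(1_\cbicat)})\cong 1_{\ecat(1_\cbicat)}$, which is exactly the fibre component of the terminal object of $\gbicat(\ecat)$. For binary products, writing $P=X\times Y$ with projections $p\colon P\to X$ and $q\colon P\to Y$, the product of $(X,A)$ and $(Y,B)$ is $(P,\dcat(p)(A)\times\dcat(q)(B))$, the product being taken in $\dcat(P)$. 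Applying $\gbicat(F)$ and using that $F_P$ preserves binary products yields $F_P(\dcat(p)(A))\times F_P(\dcat(q)(B))$ in $\ecat(P)$, and the pseudonaturality isomorphisms rewrite this as $\ecat(p)(F_X(A))\times\ecat(q)(F_Y(B))$, which is precisely the fibre component of the product of $\gbicat(F)(X,A)=(X,F_X(A))$ and $\gbicat(F)(Y,B)=(Y,F_Y(B))$ computed by the same recipe in $\gbicat(\ecat)$. Equalizers are handled identically: one forms the equalizer in $\cbicat$ of the two base arrows, reindexes along its inclusion, and takes an equalizer in the resulting fibre, and the same combination of fibrewise preservation by $F$ and the pseudonaturality constraint shows the outcome is preserved.

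The one point requiring care — and the main obstacle — is to check that the isomorphisms produced above are not merely abstract identifications of objects but are compatible with the projection (respectively cone) morphisms, so that the canonical comparison arrow from $\gbicat(F)(\lim)$ to $\lim\gbicat(F)$ is precisely the map being inverted. This reduces to a bookkeeping verification that the pseudonaturality constraints of $F$ and the pseudofunctoriality constraints of $\dcat$ and $\ecat$ assemble coherently with the universal arrows described in Proposition 2.4.2; once the naturality of $F_Y\circ\dcat(f)\cong\ecat(f)\circ F_X$ in the fibre objects is taken into account, the comparison morphisms are seen to coincide with the canonical ones and hence to be invertible, yielding the claim.
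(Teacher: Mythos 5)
Your proof is correct and follows essentially the same route as the paper: both exploit the two-stage (base-then-fibre) construction of finite limits in $\gbicat(\dcat)$, the fact that $\gbicat(F)$ acts trivially on base components, and the combination of fibrewise limit-preservation with the pseudonaturality isomorphisms $F_Y\circ\dcat(f)\cong\ecat(f)\circ F_X$ to identify the fibre components, up to the same bookkeeping with canonical comparison maps that the paper also only sketches. The sole difference is cosmetic: you verify terminal objects, binary products and equalizers, whereas the paper verifies terminal objects and pullbacks; the argument is identical under either generating class of finite limits.
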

\begin{proof}
	We set $\gbicat(F)=:B$, and for the sake of simplicity we suppose that the structural isomorphism $p_\ecat\circ \gbicat(F)\cong p_\dcat$ and that all the structural isomorphisms of $B$ are identities. 
	The image of an object $(X,U)$ of $\gbicat(\dcat)$ via $B$ is defined as the object $(X, B(X)(U))$ of $\gbicat(\ecat)$, and the image of an arrow $(y,a):(Y,V)\rightarrow (X,U)$ in $\gbicat(\dcat)$ is the couple
	\[
	(Y, B(Y)(V))\xrightarrow{(y,B(y)(a))} (X, B(X)(U)).
	\]
	Our goal is to prove that $B$ preserves terminal objects and pullbacks.
	
	Since every transition morphism of $B$ is limit-preserving, in particular it holds that \[B(1_\cbicat)(1_{\dcat(1_\cbciat})\simeq 1_{\ecat(1_\cbicat)},\]
	and thus the image via $B$ of the terminal object of $\gbicat(\dcat)$, \ie $(1_\cbicat, 1_{\dcat(1_\cbicat)})$, is the terminal object in $\gbicat(\ecat)$.
	
	Concerning pullbacks, we recall that a pullback square
	\[
	\begin{tikzcd}
		{(P,K)} \ar[d, "{(p,h)}"'] \ar[r, "{(q,k)}"] & {(Z,W)} \ar[d, "{(z,b)}"]\\
		{(Y,V)} \ar[r, "{(y,a)}"'] & {(X,U)} \ar[ul, phantom, "\lrcorner" near end]
	\end{tikzcd}
	\] 
	in $\gbicat(\dcat)$ is built from the arrows of the two pullback squares
	\[
	\begin{tikzcd}
		P \ar[d, "p"'] \ar[r,"q"] & Z\ar[d, "z"]\\
		Y\ar[r,"y"'] & X\ar[ul, phantom, "\lrcorner" near end]
	\end{tikzcd}
	\begin{tikzcd}
		K \ar[d, "h"'] \ar[rr, "k"] && \dcat(q)(W) \ar[d, "\dcat(q)(b)"]\\
		\dcat(p)(V) \ar[r, "\dcat(p)(a)"'] & \dcat(p)\dcat(y)(U) \ar[r, "\sim"] & \dcat(q)\dcat(z)(U) \ar[ull, phantom, "\lrcorner" near end]
	\end{tikzcd}
	\]
	built respectively in $\cbicat$ and in $\dcat(P)$. Its image via $B$ is the square
	\[
	\begin{tikzcd}[column sep=15ex]
		{(P,B(P)(K))} \ar[d, "{(p,B(P)(h))}"'] \ar[r, "{(q,B(P)(k))}"] & {(Z,B(Z)(W))} \ar[d, "{(z,B(Z)(b))}"]\\
		{(Y,B(Y)(V))} \ar[r, "{(y,B(Y)(a))}"'] & {(X,B(X)(U)):} 
	\end{tikzcd}
	\]
	it is now sufficient to notice that the square
	\[
	\begin{tikzcd}
		{B(P)(K)} \ar[d, "{B(P)(h)}"'] \ar[rr, "{B(P)(k)}"] && {B(q)(B(Z)(W))} \ar[d, "{B(q)(B(Z)(b))}"]\\
		{B(p)(B(Y)(V))} \ar[r, "{B(p)(B(Y)(a))}"{yshift=1ex}] & {B(p)(B(y)(B(X)(U)))} \ar[r, "\sim"] & {B(p)(B(Z)(\dcat(y)(U))} 
	\end{tikzcd}
	\]
	formed by the second components is still a pullback square: indeed, up to canonical isomorphisms it is the image via $B(P)$ (which preserves pullbacks) of the pullback square above whose sides are $h$, $\dcat(p)(a)$, $\dcat(q)(b)$ and $q$.
\end{proof}

The following result is a generalization to arbitrary (non-necessarily trivial) cartesian relative sites over a cartesian site of Giraud's Corollary 2.5 of \cite{giraud.classifying}, asserting the universal property of the classifying topos of a cartesian stack: 
\begin{thm}\label{thm:RelativeDiaconescuCartesian}
	Let $({\cal C}, J)$ be a small-generated site, where $\cal C$ is a cartesian category, ${\mathbb D}:{\cal C}^{\textup{op}} \to \CAT$ a cartesian pseudofunctor, $K$ a Grothendieck topology on ${\cal G}({\mathbb D})$ containing Giraud's topology $J_\dcat$, $A:{\cal C} \to \Ftopos$ a cartesian $J$-continuous functor inducing a geometric morphism $f:\Ftopos\to \Sh({\cal C}, J)$. Then, considering $p_{\mathbb D}$ as a comorphism of sites $({\cal G}({\mathbb D}), K) \to ({\cal C}, J)$, we have an equivalence of categories
	\[
	\Topos/{\Sh({\cal C}, J)}([f], [C_{p_{\mathbb D}}])\simeq \Fib_{\cal C}\cartcov((\gbicat(\dcat), K),(\comma{1_\Ftopos}{A} , J_{f}|_{\comma{1_\Ftopos}{A}})),
	\]
	where $\Fib_{\cal C}\cartcov((\gbicat(\dcat), K),(\comma{1_\Ftopos}{A} , J_{f}|_{\comma{1_\Ftopos}{A}}))$\index{$\Fib\cartcov_\cbicat$} is the category of morphisms of fibrations over $\cal C$ which are cartesian at each fibre and cover-preserving.
\end{thm}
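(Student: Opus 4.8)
The plan is to produce a pair of quasi-inverse functors between the two hom-categories, using the classical (absolute) Diaconescu theorem for the cartesian site $(\gbicat(\dcat),K)$ as the engine and the adjunction $p_\dcat\dashv\tau_\dcat$ of Lemma~\ref{lemma:fib_with_terminals_has_radj} as the device that keeps track of the base $\Sh(\cbicat,J)$. Before anything else I would record the structural identity
\[
C_{p_\dcat}^{\ast}\circ l_J\;\cong\; l_K\circ\tau_\dcat .
\]
This follows from Lemma~\ref{lemma:fib_with_terminals_has_radj}: since $\gbicat(\dcat)((C',U'),\tau_\dcat(C))\cong \cbicat(C',C)$ naturally, the representable $y_{\gbicat(\dcat)}(\tau_\dcat(C))$ coincides with the presheaf $y_\cbicat(C)\circ p_\dcat\op$; sheafifying this isomorphism, together with the standard description of the inverse image of a comorphism-induced morphism ($C_{p_\dcat}^{\ast}\circ a_J\cong a_K\circ p_\dcat^{\ast}$, with $p_\dcat^{\ast}$ the restriction functor), yields the display. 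This is the single ingredient that converts the relative condition $C_{p_\dcat}\circ g\cong f$ into data over $\cbicat$.

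For the forward functor, start from a relative geometric morphism $(g,\theta)\colon[f]\to[C_{p_\dcat}]$, where $\theta\colon C_{p_\dcat}\circ g\cong f$. By Diaconescu's theorem $g$ corresponds to the cartesian, $K$-continuous functor $M:=g^{\ast}\circ l_K\colon\gbicat(\dcat)\to\Ftopos$. I would lift $M$ to a functor $\gbicat(\dcat)\to\comma{1_\Ftopos}{A}$ over $\cbicat$ sending $(C,U)$ to $(M(C,U),\alpha_{(C,U)})$, where $\alpha_{(C,U)}\colon M(C,U)\to A(C)$ is the image under $M$ of the unit $(C,U)\to\tau_\dcat(p_\dcat(C,U))=\tau_\dcat(C)$ of $p_\dcat\dashv\tau_\dcat$, post-composed with the isomorphism $M(\tau_\dcat(C))=g^{\ast}l_K\tau_\dcat(C)\cong g^{\ast}C_{p_\dcat}^{\ast}l_J(C)\cong f^{\ast}l_J(C)=A(C)$ supplied by the identity above and by $\theta$. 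This assignment is a morphism of fibrations over $\cbicat$ by construction; its fibre over $C$ factors as $\dcat(C)\hookrightarrow\gbicat(\dcat)/\tau_\dcat(C)\xrightarrow{\bar M}\Ftopos/A(C)$, both functors preserving finite limits (the first because products and the terminal in $\dcat(C)$ are computed as pullbacks over $\tau_\dcat(C)$ and as $\tau_\dcat(C)$ itself, the second because $M$ is cartesian), so it is cartesian on fibres; and it is cover-preserving because, by definition of $J_f$, a family in $\comma{1_\Ftopos}{A}$ is $J_f|_{\comma{1_\Ftopos}{A}}$-covering precisely when its image under $\pi_\Ftopos$ is epimorphic in $\Ftopos$, which is exactly what $K$-continuity of $M$ provides.

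For the reverse functor, start from a fibration morphism $N\colon\gbicat(\dcat)\to\comma{1_\Ftopos}{A}$ over $\cbicat$ that is cartesian on fibres and cover-preserving, and set $M:=\pi_\Ftopos\circ N$. Here $\comma{1_\Ftopos}{A}$ is the total category of the cartesian pseudofunctor $C\mapsto\Ftopos/A(C)$ (its transition functors are pullbacks, hence cartesian), so Lemma~\ref{lemma:fibrewisecartesian_implica_cartesian} makes $N$ finite-limit-preserving; since $A$ is cartesian the projection $\pi_\Ftopos$ preserves finite limits too, whence $M$ is cartesian. Cover-preservation of $N$ together with the definition of $J_f$ gives that $M$ is $K$-continuous, so by Diaconescu $M$ determines a geometric morphism $g$ with $g^{\ast}l_K\cong M$. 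The compatibility $C_{p_\dcat}\circ g\cong f$ is recovered as follows: since $N$ is cartesian on fibres it sends the fibrewise terminal $\tau_\dcat(C)=(C,1_{\dcat(C)})$ to the terminal $\mathrm{id}_{A(C)}$ of $\Ftopos/A(C)$, so $M\circ\tau_\dcat\cong A$; combining with the structural identity gives $g^{\ast}C_{p_\dcat}^{\ast}l_J\cong g^{\ast}l_K\tau_\dcat\cong M\tau_\dcat\cong A\cong f^{\ast}l_J$, and this extends off the generators $l_J(\cbicat)$ to an isomorphism $g^{\ast}C_{p_\dcat}^{\ast}\cong f^{\ast}$, i.e. to $\theta$.

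Finally I would check that these assignments are mutually quasi-inverse and $2$-functorial, namely that a geometric transformation $g\Rightarrow g'$ compatible with the structural isomorphisms corresponds to a natural transformation of fibration morphisms over $\cbicat$, and conversely; this is a diagram chase once the object-level correspondence is fixed. The main obstacle I anticipate is not any single step but the coherent threading of the compatibility datum $\theta$ through Diaconescu's equivalence in both directions: one must verify that the isomorphism $M\tau_\dcat\cong A$ on the fibration side and $\theta$ on the topos side determine each other compatibly with all the canonical isomorphisms ($C_{p_\dcat}^{\ast}l_J\cong l_K\tau_\dcat$, the counit of sheafification, and $g^{\ast}l_K\cong M$), so that the round trips are canonically isomorphic to the identities. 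Proposition~\ref{propbidense}, applied to $G=p_\dcat$, can be used here to identify $\gbicat(\dcat)$ with a dense sub-site of the canonical relative site of $C_{p_\dcat}$, which streamlines this coherence check.
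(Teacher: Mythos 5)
Your proposal is correct and essentially reproduces the paper's own argument: the same key ingredients appear in the same roles, namely the adjunction $p_\dcat\dashv\tau_\dcat$ of Lemma~\ref{lemma:fib_with_terminals_has_radj} yielding $C_{p_\dcat}^{\ast}\circ l_J\cong l_K\circ\tau_\dcat$, the definition of the fibration morphism as $g^{\ast}l_K$ applied to the unit $(1_X,!)$, Lemma~\ref{lemma:fibrewisecartesian_implica_cartesian} to get cartesianness in the converse direction, and classical Diaconescu for the quasi-inverse check. The only cosmetic differences are that you invoke Diaconescu directly on $M=\pi_\Ftopos\circ N$ and extend the base-compatibility isomorphism off the generators $l_J(\cbicat)$, where the paper instead applies $\Sh(-)$ to the commuting triangle of morphisms of sites $B\circ\tau_\dcat\cong\tau_{\textup{comma}}$; these amount to the same computation.
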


\begin{proof}
	Let us begin with a a relative geometric morphism $g:[f]\to [C_{p_{\mathbb D}}]$: we want to build a cover-preserving cartesian morphism of fibrations $B:\gbicat(\dcat)\to \comma{1_\Ftopos}{A}$ such that $\Sh(B)\cong f$. First of all, we remark that the definition of $B$ is forced by some of the conditions. First of all, the composite
	\[
	\gbicat(\dcat)\xrightarrow{B}\comma{1_\Ftopos}{A}\xrightarrow{\pi_\Ftopos}\Ftopos
	\]
	must be a morphism of sites yielding the geometric morphism $g$: thus in particular $g^*l_K(X,U)\simeq \Sh(\pi_\Ftopos B)^*l_K(X,U)\simeq \pi_\Ftopos B(X,U)$ (as an object of $\Ftopos$): thus the first component of $B(X,U)$ must be the object $g^*l_K(X,U)$ of $\Ftopos$; the second component must be $X$, for we want $B$ to satisfy $\pi_\cbicat\circ B=p_\dcat$. Now, remember from Lemma \ref{lemma:fib_with_terminals_has_radj} that $p_\dcat$ has a right adjoint $\tau_\dcat$: this implies that $$A(X)\simeq f^*l_J(X)\simeq g^* C_{p_\dcat}^* l_J(X)\simeq g^*\Sh(\tau_\dcat)^*l_J(X)\simeq g^*l_K \tau_\dcat(X)=g^*l_K(X,1_{\dcat(X)}).$$
	Since $B(X,U)$ is an arrow $g^*l_K(X,U)\rightarrow A(X)$ in $\Ftopos$, the sensible choice is to set it equal to $g^*l_K(1_X, !)$, where $!:U\to 1_{\dcat(X)}$ denotes the unique arrow to the terminal object of the fibre $\dcat(X)$. This provides us with a definition of a functor
	\[B:\gbicat(\dcat)\to \comma{1_\Ftopos}{A}.\]
	To prove that it is a morphism of fibrations, we need to show that it maps cartesian arrows of $\gbicat(\dcat)$ to cartesian arrows of $\comma{1_\Ftopos}{A}$, \ie pullback squares of $\Ftopos$: but the image of a cartesian arrow $(y,1):(Y,\dcat(y)(U))\to (X,U)$ is the square
	\[
	\begin{tikzcd}[column sep=10ex]
		{B(Y,\dcat(y)(U))=} \ar[d, "{B(y,1)=}"] & {g^*l_K(Y,\dcat(y)(U))} \ar[d,"{g^*l_K(y,1)}"] \ar[r,"{g^*l_K(1_Y,!)}"] & {g^*l_K(Y, 1_{\dcat(Y)})}\ar[d, "{g^*l_K(y,!)}"]\\
		{B(X,U)=} & {g^*l_K(X,U)} \ar[r,"{g^*l_K(1_X,!)}"] & {g^*l_K(X, 1_{\dcat(X)})},
	\end{tikzcd}
	\]
	which is a pullback since it is the image via the finite-limit-preserving functor $g^*l_K$ of a pullback square in $\gbicat(\dcat)$ (cf. the proof of Lemma 2.4.1 \cite{CaramelloZanfa}). Notice that the corresponding pseudonatural trasformation $B:\dcat\Rightarrow \Ftopos/A(-)$ acts fibrewise by mapping $a:U'\to U$ to $g^*l_K(1,a):g^*l_K(X,U')\to g^*l_K(X,U)$: since its behaviour coincides with that of $g^*l_K$, it obviously preserves finite limits (this implies by Lemma \ref{lemma:fibrewisecartesian_implica_cartesian} that the functor $B:\gbicat(\dcat)\rightarrow \comma{1_\Ftopos}{A}$ is cartesian). The functor $B$ is also obviously cover-preserving, for a $K$-covering family is mapped via $g^*l_K$ to a jointly epic family, and thus its image in $\comma{1_\Ftopos}{A}$ is $J_f$-covering.
		
	Conversely, let us start from a morphism of fibrations $B:{\cal G}({\mathbb D}) \to \comma{1_\Ftopos}{A}$ sending $K$-covering families to $J_{f}$-covering families and preserving finite limits at each fibre. By Lemma \ref{lemma:fibrewisecartesian_implica_cartesian}, $B$ is a cartesian functor, and thus it is a morphism of sites $(\gbicat(\dcat), K)\rightarrow ( \comma{1_\Ftopos}{A}, J_{f}|_{(1_{\cal F}\downarrow A)})$: thus, it induces a geometric morphism $\Sh(B):\Ftopos\to \Sh(\dbicat,K)$. Finally, let us denote by $\tau_\dcat$ and $\tau_{\textup{comma}}$ the right adjoints to $p_\dcat:\gbicat(\dcat)\rightarrow \cbicat$ and $\pi_\cbicat:\comma{1_\Ftopos}{A}\rightarrow \cbicat$, again by Lemma \ref{lemma:fib_with_terminals_has_radj}: they are morphisms of sites, since their left adjoints are comorphisms of sites (see \cite[Proposition 3.14(iii)]{denseness}), and it holds that $\Sh(\tau_\dcat)\cong C_{p_\dcat}$ and $\Sh(\tau_{\textup{comma}})\cong C_{\pi_\cbicat}\cong f$. A computation also immediately shows that the triangle of morphisms of sites
	\[
	\begin{tikzcd}
		{(\gbicat(\dcat), K)} \ar[r, "B"] & {(\comma{1_\Ftopos}{A},J_f|_{\comma{1_\Ftopos}{A}})} \\
		{(\cbicat,J)} \ar[u, "\tau_\dcat"] \ar[ur, "\tau_{\textup{comma}}"'] &
	\end{tikzcd}
	\]
	commutes; thus $\Sh(\tau_\dcat)\circ \Sh(B)\cong \Sh(\tau_{\textup{comma}})$, \ie $C_{p_\dcat}\circ \Sh(B)\cong f$ and $\Sh(B)$ is a morphism over $\Sh(\cbicat,J)$, as required.
	
	The fact that these two correspondences yield functors which are quasi-inverse to each other follows from Diaconescu's classical equivalence. 
\end{proof}

\begin{remark}\label{remrelDiaconescu}
	In light of Lemma \ref{lemma:fib_with_terminals_has_radj}, the morphisms on the right-hand side of the equivalence are precisely the morphisms of relative sites $(\gbicat(\dcat), K) \to (\comma{1_\Ftopos}{A} , J_{f}|_{\comma{1_\Ftopos}{A}})$ (in the sense of section \ref{subsec:relsitesreltop}). In fact, as will be shown in future work, the theorem generalizes to the non-cartesian setting, by taking, on the right hand-side, the category of morphisms of relative sites $(\gbicat(\dcat), K) \to (\comma{1_\Ftopos}{A} , J_{f}|_{\comma{1_\Ftopos}{A}})$.     
\end{remark}

\begin{cor}\label{correlativemorphisms}
	Let $f:\Ftopos\to \Etopos$ and $f':\Ftopos' \to \Etopos$ be geometric morphisms towards the same base topos $\Etopos$. Then we have an equivalence of categories
	\[
	\Topos/\Etopos([f], [f'])\simeq \Fib_{\Etopos}\cartcov( 
	(\comma{1_{\Ftopos'}}{f'^*}, J_{f'}), 
	( \comma{1_{\Ftopos}}{f^*}, J_{f})
	),
	\]	
	where $\Fib_{\Etopos}\cartcov( 
	(\comma{1_{\Ftopos'}}{f'^*}, J_{f'}), 
	( \comma{1_{\Ftopos}}{f^*}, J_{f})
	)$ is the category of morphisms of fibrations over $\Etopos$ which are cartesian at each fibre and cover-preserving.
\end{cor}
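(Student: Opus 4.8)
The plan is to deduce the statement directly from Theorem~\ref{thm:RelativeDiaconescuCartesian}, by specializing its data to the canonical relative sites of $f$ and $f'$. Concretely, I would take as base site the canonical site $(\Etopos, J^{\textup{can}}_{\Etopos})$ of the Grothendieck topos $\Etopos$, whose underlying category is cartesian (it has all finite limits) and which is small-generated; identifying $\Etopos$ with $\Sh(\Etopos, J^{\textup{can}}_{\Etopos})$ makes the canonical functor $l_{J^{\textup{can}}_{\Etopos}}$ into (an equivalence that we may treat as) the identity.

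For the cartesian pseudofunctor I would take $\dcat := {\mathbb I}_{f'}:\Etopos\op\to\CAT$, the canonical stack of $f'$ recalled in Section~\ref{sec:canonicalrelativesite}, which sends $E$ to $\Ftopos'/f'^*(E)$ and $u:E'\to E$ to the pullback functor $u^*$. Its Grothendieck construction $\gbicat({\mathbb I}_{f'})$ is exactly the comma category $\comma{1_{\Ftopos'}}{f'^*}$, with $p_{\dcat}=\pi_{\Etopos}$; I would set $K:=J_{f'}$ and $A:=f^*$. With these choices the comma category $\comma{1_\Ftopos}{A}$ of the theorem becomes $\comma{1_\Ftopos}{f^*}$ with topology $J_f$, the comorphism $C_{p_\dcat}$ becomes $f'$, and the left-hand side $\Topos/\Sh(\Etopos, J^{\textup{can}}_{\Etopos})([f],[C_{p_\dcat}])$ becomes $\Topos/\Etopos([f],[f'])$; the right-hand category of cover-preserving fibrewise-cartesian morphisms of fibrations over $\Etopos$ then matches that in the statement, so the equivalence follows with its quasi-inverse functors inherited verbatim.

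Before applying the theorem I would verify its three hypotheses for these choices. The pseudofunctor ${\mathbb I}_{f'}$ is cartesian because each fibre $\Ftopos'/f'^*(E)$ is a topos, hence cartesian, and the transition functors $u^*$ are pullback functors, hence finite-limit-preserving. The topology $J_{f'}$ contains Giraud's topology $J_{{\mathbb I}_{f'}}$ because, by Section~\ref{sec:canonicalrelativesite}, $\pi_{\Etopos}:(\comma{1_{\Ftopos'}}{f'^*}, J_{f'})\to(\Etopos, J^{\textup{can}}_{\Etopos})$ is the canonical relative site of $f'$, so it is in particular a relative site and $J_{f'}\supseteq J_{{\mathbb I}_{f'}}$ by definition, while $C_{\pi_{\Etopos}}\cong f'$. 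Finally $A=f^*$ is a cartesian $J^{\textup{can}}_{\Etopos}$-continuous functor inducing $f$: it preserves finite limits and sends canonical (\ie jointly epimorphic) covers to jointly epimorphic families, hence is a morphism of sites $(\Etopos, J^{\textup{can}}_{\Etopos})\to(\Ftopos, J^{\textup{can}}_{\Ftopos})$ with $\Sh(f^*)\cong f$.

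The only genuinely delicate point is the smallness requirement: one must ensure that $(\Etopos, J^{\textup{can}}_{\Etopos})$ legitimately counts as a small-generated site so that Theorem~\ref{thm:RelativeDiaconescuCartesian} applies, which is the case because $\Etopos$ admits a small separating set and $J^{\textup{can}}_{\Etopos}$ is subcanonical. Everything else reduces to matching the notation of the theorem with that of the canonical relative sites of $f$ and $f'$, so once these hypotheses are recorded no further computation is needed.
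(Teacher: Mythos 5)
Your proposal is correct and follows exactly the paper's own route: the paper likewise proves this corollary by applying Theorem~\ref{thm:RelativeDiaconescuCartesian} with $({\cal C}, J)=(\Etopos, J^{\textup{can}}_{\Etopos})$, ${\mathbb D}={\mathbb I}_{f'}$, $A=f^{\ast}$ and $K=J_{f'}$. Your explicit verification of the theorem's hypotheses (cartesianness of ${\mathbb I}_{f'}$, containment of the Giraud topology in $J_{f'}$, continuity of $f^{\ast}$, and small-generation of the canonical site) is a sound elaboration of what the paper leaves implicit.
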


\begin{proof}
	It suffices to apply Theorem \ref{thm:RelativeDiaconescuCartesian} in the particular case $({\cal C}, J)=(\Etopos, J\can_{\Etopos})$, ${\mathbb D}={\mathbb I}_{f'}$, $A=f^{\ast}$ and $K=J_{f'}$.   
\end{proof}

\begin{cor}\label{cor:fullandfaithfulnessinternallocales}
	Let $\Etopos$ be a Grothendieck topos and $L$, $L'$ internal locales in $\Etopos$. Then we have an equivalence of categories
	\[
	\Topos/\Etopos(\Sh_{\Etopos}(L), \Sh_{\Etopos}(L'))\simeq \Loc(\Etopos)(L, L'),
	\]
	where $\Loc(\Etopos)(L, L')$ is the category of morphisms of internal locales from $L$ to $L'$ in $\Etopos$.	
\end{cor}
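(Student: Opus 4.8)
The plan is to deduce the equivalence from Corollary \ref{correlativemorphisms} by identifying the relative sites of the two localic structure morphisms with the internal frames $\Ocal(L)$ and $\Ocal(L')$. Write $f:\Sh_{\Etopos}(L)\to\Etopos$ and $f':\Sh_{\Etopos}(L')\to\Etopos$ for the structure geometric morphisms. Applying Corollary \ref{correlativemorphisms} with base topos $\Etopos$ immediately gives
\[
\Topos/\Etopos(\Sh_{\Etopos}(L),\Sh_{\Etopos}(L'))\simeq \Fib_{\Etopos}\cartcov((\comma{1_{\Sh_{\Etopos}(L')}}{f'^\ast}, J_{f'}),(\comma{1_{\Sh_{\Etopos}(L)}}{f^\ast}, J_f)),
\]
so it remains to show that the right-hand side is equivalent to $\Loc(\Etopos)(L,L')\cong\Frame(\Etopos)(\Ocal(L'),\Ocal(L))$.

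First I would pass from the full relative sites to the reduced relative sites $(1_{\Sh_{\Etopos}(L)}\downarrow^{\Sub} f^{\ast})$ (and likewise for $f'$). Since $f$ is localic, the subobjects of the objects $f^{\ast}(E)$ generate $\Sh_{\Etopos}(L)$, so the reduced relative site is a dense subsite of the full one. Proposition \ref{prop:preservationmono} shows that the fibred inverse-image functors realising the morphisms of fibrations restrict to the reduced sites, while density guarantees that a cartesian cover-preserving morphism defined on the reduced site extends uniquely to the full one (its value on a general object being forced, via cover-preservation and cartesianness, by its values on subobjects of the $f^{\ast}(E)$). This makes the restriction an equivalence of the two $\Fib\cartcov$ categories and reduces the problem to the reduced relative sites.

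The key step is the identification of the reduced relative site with the externalisation of the internal frame. Indexing over $\Etopos$, the fibre of $(1_{\Sh_{\Etopos}(L)}\downarrow^{\Sub} f^{\ast})$ over an object $E$ is the poset $\Sub_{\Sh_{\Etopos}(L)}(f^{\ast}(E))$; using that $\Sh_{\Etopos}(L)$ is localic over $\Etopos$, this poset is naturally isomorphic to $\Hom_{\Etopos}(E,\Ocal(L))$ with the pointwise order. Hence the $\Etopos$-indexed poset $E\mapsto\Sub_{\Sh_{\Etopos}(L)}(f^{\ast}(E))$ is precisely the externalisation of the internal frame $\Ocal(L)$, and the restricted topology $J_f$ identifies a family of subobjects $\{U_i\mono U\}$ as covering exactly when $\bigvee_i U_i=U$ holds internally, i.e. it is the canonical join topology of the frame.

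Finally I would translate the defining conditions of $\Fib_{\Etopos}\cartcov$ into frame-theoretic ones. By the $2$-Yoneda lemma, a morphism of fibrations over $\Etopos$ between the externalisations of $\Ocal(L')$ and $\Ocal(L)$ is the same datum as an internal monotone map $\phi:\Ocal(L')\to\Ocal(L)$, the reversal of direction being exactly that expected of a locale morphism $L\to L'$. Being cartesian at each fibre forces $\phi$ to preserve finite limits in the fibres, i.e. finite meets including the top element; being cover-preserving forces, by the description of the topology above, preservation of arbitrary joins. Thus the cartesian cover-preserving morphisms of fibrations are precisely the internal frame homomorphisms $\Ocal(L')\to\Ocal(L)$, and on both sides the $2$-cells reduce to the pointwise order, yielding $\Fib_{\Etopos}\cartcov(\dots)\simeq\Frame(\Etopos)(\Ocal(L'),\Ocal(L))=\Loc(\Etopos)(L,L')$. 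I expect the main obstacle to lie in the second and third steps: rigorously establishing that the reduced relative sites carry the same classifying data as the full ones (density together with the unique extension of cartesian cover-preserving morphisms), and carrying out the internal-language verification of the natural isomorphism $\Sub_{\Sh_{\Etopos}(L)}(f^{\ast}(E))\cong\Hom_{\Etopos}(E,\Ocal(L))$ compatibly with the topologies.
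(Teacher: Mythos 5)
Your overall strategy can be made to work, but there is a genuine gap in your second step, and it is precisely the step the paper's proof is designed to avoid. You enter through Corollary \ref{correlativemorphisms}, whose right-hand side consists of morphisms of fibrations out of the \emph{full} relative site $(\comma{1_{\Sh_{\Etopos}(L')}}{f'^{\ast}}, J_{f'})$, and you then claim that restriction to the reduced relative site is an equivalence because ``a cartesian cover-preserving morphism defined on the reduced site extends uniquely to the full one, its value on a general object being forced, via cover-preservation and cartesianness, by its values on subobjects of the $f'^{\ast}(E)$.'' This mechanism fails. First, the denseness of Proposition \ref{prop:densenessrelsitelocalicmorphism} covers a general object $(F,E,\alpha)$ by an object of the reduced site lying over a \emph{different} object of the base, namely $f'_{\ast}(\tilde{F})\times E$; the value at $(F,E,\alpha)$ is not determined fibrewise. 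Indeed fibrewise density is false in general: take $\Etopos=\Set^{G}$ for a nontrivial group $G$, $L'$ the internal locale with $L'(E)=\Sub_{\Etopos}(E)$ (so $f'\cong 1_{\Etopos}$, which is localic); the object $(G,1,!)$ of the full relative site receives no arrow at all from a reduced-site object in the same fibre, since $G$ has no fixed points. Second, even allowing covers from all fibres, ``forcing'' a value from a cover makes no sense inside the target, which is a \emph{site} rather than a topos: covering families give epimorphic families only after sheafification, so there is no gluing by which the extension could be defined, shown independent of the chosen cover, and shown again cartesian and cover-preserving --- doing this honestly amounts to passing through the induced geometric morphisms, i.e.\ to reproving the classification theorem.

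The repair is to notice that the detour through the full site of $f'$ is unnecessary, which is exactly the paper's proof: $\Sh_{\Etopos}(L')$ is \emph{by definition} $\Sh({\cal G}(L'), J^{\textup{can}}_{L'})$ with structure morphism $C_{p_{L'}}$, and ${\cal G}(L')$ is a legitimate cartesian relative site over $(\Etopos, J^{\textup{can}}_{\Etopos})$, so Theorem \ref{thm:RelativeDiaconescuCartesian} applies directly with ${\mathbb D}=L'$, producing morphisms of fibrations $p_{L'}\to \comma{1_{\Sh_{\Etopos}(L)}}{f_{L}^{\ast}}$ that are cartesian at each fibre and cover-preserving; no reduction on the domain side is ever needed. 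On the codomain side your restriction argument (via Proposition \ref{prop:preservationmono}) is fine and is what the paper does. One more point: your isomorphism $\Sub_{\Sh_{\Etopos}(L)}(f^{\ast}(E))\cong \Hom_{\Etopos}(E,\Ocal(L))$, compatibly with the topologies, is not a routine internal-language verification --- it is the statement that the unit $L\to L_{f_{L}}$ is an isomorphism, i.e.\ the content of Proposition \ref{propsitecharinternallocales}, and it must be invoked (the paper does so implicitly when it writes $L_{f_{L}}=L$). Your final translation of cartesianness-at-each-fibre plus cover-preservation into internal frame homomorphisms $\Ocal(L')\to\Ocal(L)$, reversing direction to give locale morphisms $L\to L'$, agrees with the paper's concluding step.
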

\begin{proof}
	Let us denote by $f_{L}$ the canonical structure morphism $\Sh_{\Etopos}(L)\to {\Etopos}$, which is isomorphic to the morphism $C_{p_{L}}$, where $p_{L}$ is regarded as a comorphism of sites $({\cal G}(L), J^{\textup{can}}_{L}) \to  ({\Etopos}, J\can_{\Etopos})$. 
	
	By Theorem \ref{thm:RelativeDiaconescuCartesian}, the category $\Topos/\Etopos(\Sh_{\Etopos}(L), \Sh_{\Etopos}(L'))$ is equivalent to the category of morphisms of fibrations $p_{L'} \to \comma{1_{\Sh_{\Etopos}(L)}}{f_{L}^{\ast}}$
	which send $J\can_{L'}$-covering families to $J_{f_{L}}$-covering sieves and which are cartesian at each fibre. 
	
	By Proposition \ref{prop:preservationmono}, such morphisms of fibrations actually take values in the reduced relative site of the morphism $f_{L}$, namely in the locale $L_{f_{L}}=L$. So the cartesianness at each fibre and the cover-preservation condition amount precisely to requiring having a homomorphisms of internal frames from $L'$ to $L$, that is an internal locale homomorphism $L\to L'$. 
\end{proof}

\section{Localic morphisms and internal locales}\label{sec:localicmorphisms}

In this section we shall investigate the relatives sites of localic geometric morphisms.

Recall that a geometric morphism $f:{\cal F}\to {\cal E}$ is said to be \emph{localic} if every object of $\cal F$ is a subquotient (i.e. a quotient of a subobject) of an object of the form $f^{\ast}(E)$ for $E\in {\cal E}$.

Let $(1_{\cal F}\downarrow^{\Sub} f^{\ast})$ be the the full subcategory of $(1_{\cal F}\downarrow f^{\ast})$	on the objects of the form $(F, E, \alpha:F\to f^{\ast}(E))$, where $\alpha$ is a monomorphism.

\begin{prop}\label{prop:densenessrelsitelocalicmorphism}
	Let $f:{\cal F}\to {\cal E}$ be a localic geometric morphism. Then the full subcategory $(1_{\cal F}\downarrow^{\Sub} f^{\ast})$ of $(1_{\cal F}\downarrow f^{\ast})$	is closed in $(1_{\cal F}\downarrow f^{\ast})$ under finite limits and it is $J_{f}$-dense. 
\end{prop}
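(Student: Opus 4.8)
The plan is to treat the two assertions separately, keeping in mind two facts: first, that $f^{\ast}$ is cartesian, being the inverse-image part of a geometric morphism; and second, that $J_{f}$ is by definition the topology on $(1_{\cal F}\downarrow f^{\ast})$ induced from the canonical topology $J\can_{\cal F}$ along the projection $\pi_{\cal F}$, so that a sieve is $J_{f}$-covering exactly when its image under $\pi_{\cal F}$ is an epimorphic family in $\cal F$.

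For the closure under finite limits, I would invoke the standard computation of limits in a comma category. Since $f^{\ast}$ preserves finite limits, $(1_{\cal F}\downarrow f^{\ast})$ has finite limits computed componentwise: the limit of a finite diagram $i\mapsto(F_{i},E_{i},\alpha_{i})$ is $(\lim_{i}F_{i},\ \lim_{i}E_{i},\ \alpha)$, where $\alpha\colon\lim_{i}F_{i}\to f^{\ast}(\lim_{i}E_{i})\cong\lim_{i}f^{\ast}(E_{i})$ is induced by the cone $(\alpha_{i})_{i}$. Now $(\alpha_{i})_{i}$ is a natural transformation between the two diagrams which is pointwise monic by hypothesis, hence monic in the functor category; as $\lim$ is right adjoint to the constant-diagram functor it preserves monomorphisms, so $\alpha=\lim_{i}\alpha_{i}$ is again monic. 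Therefore the limit lies in $(1_{\cal F}\downarrow^{\Sub}f^{\ast})$. (The terminal object is the case of the empty diagram, namely $(1_{\cal F},1_{\cal E},\id)$, whose structure map is an iso, hence monic.)

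For $J_{f}$-density I would argue pointwise in the domain. Fix an object $(F,E,\alpha\colon F\to f^{\ast}(E))$. Because $f$ is localic, $F$ is a subquotient of some $f^{\ast}(E')$, so there are a monomorphism $m\colon S\mono f^{\ast}(E')$ and an epimorphism $e\colon S\epi F$. The only delicate point is to produce from these a morphism into $(F,E,\alpha)$ whose source still has a \emph{monic} structure map; the trick is to pair $m$ with $\alpha\circ e$. Concretely, set $\beta:=\langle m,\ \alpha\circ e\rangle\colon S\to f^{\ast}(E')\times f^{\ast}(E)\cong f^{\ast}(E'\times E)$, which is monic since its first component $m$ is. Thus $(S,\ E'\times E,\ \beta)$ is an object of $(1_{\cal F}\downarrow^{\Sub}f^{\ast})$, and $(e,\ \mathrm{pr}_{E})\colon(S,E'\times E,\beta)\to(F,E,\alpha)$ is a morphism of $(1_{\cal F}\downarrow f^{\ast})$: the comma square commutes because $f^{\ast}(\mathrm{pr}_{E})\circ\beta=\mathrm{pr}_{f^{\ast}(E)}\circ\langle m,\alpha e\rangle=\alpha\circ e$, where $\mathrm{pr}_{E}\colon E'\times E\to E$ is the projection.

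To finish, I would observe that the sieve on $(F,E,\alpha)$ generated by all morphisms from objects of $(1_{\cal F}\downarrow^{\Sub}f^{\ast})$ contains the arrow $(e,\mathrm{pr}_{E})$ just constructed, whose image under $\pi_{\cal F}$ is the epimorphism $e$. Hence the $\pi_{\cal F}$-image of this sieve is an epimorphic family in $\cal F$, so the sieve is $J_{f}$-covering, which is exactly $J_{f}$-density of the subcategory. I expect the closure under finite limits to be routine; the genuine obstacle is the density step, where the requirement of landing in the subcategory (monic structure map) must be reconciled with the requirement of admitting a morphism to the prescribed object $(F,E,\alpha)$ — and it is precisely the pairing $\langle m,\alpha e\rangle$ together with the projection $\mathrm{pr}_{E}$ that achieves both at once.
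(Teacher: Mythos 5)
Your proof is correct, and on the finite-limits part it coincides with the paper's (limits in $(1_{\cal F}\downarrow f^{\ast})$ are computed componentwise because $f^{\ast}$ is left exact, and a limit of a pointwise-monic cone of structure maps is monic). On the density part your route is genuinely leaner than the paper's. The paper does not feed the subquotient presentation $m:S\mono f^{\ast}(E')$, $e:S\epi F$ directly into the covering construction; instead, following Lemma A4.6.3 of \cite{elephant}, it first builds the \emph{canonical} such presentation via the partial-map representer $\tilde{F}$: it pulls back the canonical mono $m_{F}:F\mono\tilde{F}$ along the counit $\epsilon_{\tilde{F}}:f^{\ast}(f_{\ast}(\tilde{F}))\to\tilde{F}$ to obtain $z:P\mono f^{\ast}(f_{\ast}(\tilde{F}))$ and $w_{F}:P\to F$, and it is exactly here that the localic hypothesis (i.e., the existence of an arbitrary presentation $m,e$) is used, namely to show that $w_{F}$ is an epimorphism. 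After that detour the paper performs precisely your pairing trick: it covers $(F,E,\alpha)$ by the object $(P,\,f_{\ast}(\tilde{F})\times E,\,\langle z,\alpha\circ w\rangle)$ via the arrow $(w,\pi_{E})$, which is $J_{f}$-covering since $w$ is epi. So the key step --- pair the mono with $\alpha\circ(\textup{epi})$ to land monically in a product $f^{\ast}(-\times E)$, then use the projection to map down to $(F,E,\alpha)$ --- is identical in both arguments; what you do differently is to skip the partial-map-representer machinery and apply the pairing to the arbitrary presentation supplied by the definition of localic morphism. Your version is shorter and more elementary, and it suffices for the statement, which is a property and not a structure; what the paper's construction buys is canonicity, as the covering object is determined functorially by $(F,E,\alpha)$ with no arbitrary choice of subquotient presentation, which can matter for naturality considerations elsewhere but is not needed here.
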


\begin{proof}
	The closure of $(1_{\cal F}\downarrow^{\Sub} f^{\ast})$ under finite limits in $(1_{\cal F}\downarrow f^{\ast})$ is immediately seen in light of the concrete construction of such limits in $(1_{\cal F}\downarrow f^{\ast})$, which is preserved by both projections functors to $\cal E$ and $\cal F$.
	
	As far as denseness is concerned, as observed in the proof of Lemma A4.6.3 \cite{elephant}, for any object $F$ of $\cal F$, there is a canonical representation of $F$ as a subquotient of an object in the image of $f^{\ast}$, obtained as follows. Let $\tilde{F}$ be the partial-map representer associated with $f$ (cf. Proposition A2.4.7 \cite{elephant}), with the canonical monomorphism $m_{F}:F\mono \tilde{F}$. Recall that $m_{F}$ is characterized by the following universal property: for any partial map $(m, f):B \pmap F$, there is a unique arrow $\tilde{f}:B\to \tilde{F}$ such that the following square is a pullback:
	\[\begin{tikzcd}
		R & F \\
		B & {\tilde{F}}
		\arrow["{m_{F}}", tail, from=1-2, to=2-2]
		\arrow["m", tail, from=1-1, to=2-1]
		\arrow["{\tilde{f}}", from=2-1, to=2-2]
		\arrow["f", from=1-1, to=1-2]
	\end{tikzcd}\]
	Then, the arrow $w_{F}$ in the pullback square
	\[\begin{tikzcd}
		P & F & \\
		{f^{\ast}(f_{\ast}(\tilde{F}))} & {\tilde{F}}
		\arrow["w_{F}", from=1-1, to=1-2]
		\arrow["z"', tail, from=1-1, to=2-1]
		\arrow["{m_{F}}", tail, from=1-2, to=2-2]
		\arrow["{\epsilon_{\tilde{F}}}", from=2-1, to=2-2]
		\arrow["\lrcorner"{anchor=center, pos=0.125}, draw=none, from=1-1, to=2-2]
	\end{tikzcd}\]
	is an epimorphism (cf. the proof of Lemma A4.6.3 of \cite{elephant}). Indeed, since $f$ is localic, $F$ fits in a diagram of the form
	\[\begin{tikzcd}
		C & F \\
		{f^{\ast}(E)}
		\arrow["n", tail, from=1-1, to=2-1]
		\arrow["u", two heads, from=1-1, to=1-2]
	\end{tikzcd}\]
	where $n$ is a monomorphism and $u$ is an epimorphism. Then, by the universal property of $m_{F}$, we have a pullback square
	\[\begin{tikzcd}
		C & F \\
		{f^{\ast}(E)} & {\tilde{F}}
		\arrow["n", tail, from=1-1, to=2-1]
		\arrow["u", two heads, from=1-1, to=1-2]
		\arrow["{m_{F}}", from=1-2, to=2-2]
		\arrow["{\tilde{u}}", from=2-1, to=2-2]
	\end{tikzcd}\]

	But, by the adjunction $(f^{\ast}\dashv f_{\ast})$, the arrow $\tilde{u}$ factors through $\epsilon_{\tilde{F}}$, whence the epimrophism $u$ factors through $w_{F}$, which is then \emph{a fortiori} an epimorphism.
	
	Now, given an object $(F, E, \alpha:F\to f^{\ast}(E))$ of the category $(1_{\cal F} \downarrow f^{\ast})$,  consider the object
	\[
	(P, f_{\ast}(\tilde{F})\times E, <z, \alpha \circ w>:P \to f^{\ast}(f_{\ast}(\tilde{F})\times E)\cong f^{\ast}(f_{\ast}(\tilde{F}))\times f^{\ast}(E))
	\]
	of the category $(1_{\cal F} \downarrow f^{\ast})$, where the arrow $<z, \alpha \circ w>$ is determined by the arrows $z$ and $\alpha \circ w$ by the universal property of the product and hence is monic (as $z$ is). So this object lies in the subcategory $(1_{\cal F}\downarrow^{\Sub} f^{\ast})$. But it is related to our original object by the arrow 
	\[\begin{tikzcd}
		{(P, f_{\ast}(\tilde{F})\times E, <z, \alpha \circ w>:P \to f^{\ast}(f_{\ast}(\tilde{F}))\times f^{\ast}(E))} \\
		\\
		{(F, E, \alpha:F\to f^{\ast}(E)),}
		\arrow["{(w, \pi_{E})}", from=1-1, to=3-1]
	\end{tikzcd}\] 
	which is $J_{f}$-covering since $w$ is an epimorphism. 
	
	This argument shows that every object of the category $(1_{\cal F}\downarrow f^{\ast})$ can be $J_{f}$-covered by an object lying in the full subcategory $(1_{\cal F}\downarrow^{\Sub} f^{\ast})$, as required.
\end{proof}

Let $L_{f}:{\cal E}^{\textup{op}}\to \CAT$ be the functor sending any object $E$ of $\cal E$ to the subobject lattice $\Sub_{\cal F}(f^{\ast}(E))$ and acting on arrows by pullback. The category $(1_{\cal F}\downarrow^{\Sub} f^{\ast})$  is clearly isomorphic to the category ${\cal G}(L_{f})$. 

Proposition \ref{prop:densenessrelsitelocalicmorphism} ensures that, by the Comparison Lemma, we have an equivalence of toposes
\[
\Sh((1_{\cal F}\downarrow f^{\ast}), J_{f})\simeq \Sh({\cal G}(L_{f}), J_{f}|_{{\cal G}(L_{f})}),
\] 
where $J_{f}|_{{\cal G}(L_{f})}$ is the Grothendieck topology induced by $J_{f}$ on the $J_f$-dense subcategory ${\cal G}(L_{f})\hookrightarrow (1_{\cal F}\downarrow f^{\ast})$. It is therefore natural to wonder whether this induced topology admits an intrinsic description directly in terms of the internal locale $L_{f}$. The answer to this question is affirmative, as we are going to see.

Let us first observe that we can express the property of a family of arrows 
\[
\{(f_i, e_i):(F_i, E_i, \alpha_i:F_i\mono f^\ast(E_i))\to (F, E, \alpha:F\mono f^{\ast}(E)) \mid i\in I\}
\]
in ${\cal G}(L_{f})$ to be $J_{f}$-covering in terms of the subobjects of $f^{\ast}(E)$ obtained by taking the images of the composite arrows $f^{\ast}(e_i)\circ \alpha_{i}$, as follows: $\{(f_i, e_i) \mid i\in I \}$ is $J_{f}$, that is, the family $\{f_i \mid i\in I\}$ is epimorphic, if and only if $\bigcup_{i\in I}\Im(f^{\ast}(e_i)\circ \alpha_{i})=1_{f^{\ast}(E)}$ in $\Sub_{\cal F}(f^{\ast}(E))$.

From a structural point of view, the subobjects $\Im(f^{\ast}(e_i)\circ \alpha_{i})$ are precisely the images of the subobjects $\alpha_i\in \Sub_{\cal F}(f^{\ast}(E_i))$ under the functor
\[
\exists_{f^{\ast}(e_i)}:\Sub_{\cal F}(f^{\ast}(E_i)) \to \Sub_{\cal F}(f^{\ast}(E)),
\]
which is left adjoint to the functor
\[
L_{f}(f^{\ast}(e_i))=(f^{\ast}(e_i))^{\ast}:L_{f}(E)=\Sub_{\cal F}(f^{\ast}(E))\to L_{f}(E_i)=\Sub_{\cal F}(f^{\ast}(E_i)). 
\]
Therefore the $J_f$-covering condition can be naturally expressed in terms of such left adjoints of transition morphisms of $L_{f}$.

Our analysis of localic geometric morphisms in terms of relative sites naturally brings us into the subject of internal locales. Indeed, as we shall see, $L_{f}$ is an internal locale to $\cal E$.

An internal locale to a Grothendieck topos is a model of the (higher-order) theory of frames in that topos. Johnstone characterized in \cite{elephant} the internal locales to a topos of sheaves on a cartesian site; specializing his characterization to the canonical site of a Grothendieck topos yields the following definition: 

\begin{defn}[cf. Lemma C1.6.9 \cite{elephant}]\label{definternallocale}
	Let $\cal E$ be a Grothendieck topos. An \emph{internal locale} to $\cal E$ is a functor $L:{\cal E}^{\textup{op}}\to \CAT$ satisfying the following conditions:
	
	\begin{enumerate}[(i)]
		\item For each $E\in {\cal E}$, $L(E)$ is a frame (regarded as a preorder category) and for each arrow $e:E\to E'$, $L(e):L(E')\to L(E)$ is a frame homomorphism admitting a left adjoint $\exists_{e}:L(E)\to L(E')$; 
		
		\item The underlying set-valued functor of $L$ is a sheaf for the canonical topology on $\cal E$; 
		
		\item $L$ satisfies the \emph{Beck-Chevalley condition}, that is, for any pullback square
		
		\[\begin{tikzcd}
			U & V \\
			W & Z
			\arrow["a", from=1-1, to=1-2]
			\arrow["c", from=1-2, to=2-2]
			\arrow["b"', from=1-1, to=2-1]
			\arrow["d", from=2-1, to=2-2]
			\arrow["\lrcorner"{anchor=center, pos=0.125}, draw=none, from=1-1, to=2-2]
		\end{tikzcd}\]
		
		in $\cal E$, the following square commutes:
		
		\[\begin{tikzcd}
			{L(V)} & {L(U)} \\
			{L(Z)} & {L(W)}
			\arrow["{L(a)}", from=1-1, to=1-2]
			\arrow["{\exists_b}", from=1-2, to=2-2]
			\arrow["{\exists_c}"', from=1-1, to=2-1]
			\arrow["{L(d)}", from=2-1, to=2-2]
		\end{tikzcd}\]
		
		\item $L$ satisfies the \emph{Frobenius reciprocity conditions}, that is, for any $a:E\to E'$, $l\in L(E)$ and $l'\in L(E')$,
		\[
		\exists_a(L(a)(l') \wedge l)= \exists_a(l) \wedge l'. 
		\] 	
	\end{enumerate}
\end{defn}

The above discussion shows that the description for the Grothendieck topology induced on ${\cal G}(L_{f})$ by $J_{f}$ can be entirely expressed in terms of the categorical structure present on $L_{f}$ (the transition morphisms and their left adjoints). In fact, such a description actually makes sense for an arbitrary internal locale $L:{\cal E}^{\textup{op}}\to \CAT$ in $\cal E$, yielding a relative site of presentation for the structure geometric morphism $\Sh_{\cal E}(L)\to {\cal E}$: the underlying category is ${\cal G}(L)$, and the relative topology, which we denote by $J_{L}^{\textup{ext}}$ (note that this topology generalizes the canonical topology on a frame $L$ in $\Set$), is defined as follows: a family
\[
\{e_i:(E_i, l_i)\to (E, l) \mid i\in I\}
\] 
of arrows in ${\cal G}(L)$ is $J_{L}^{\textup{ext}}$-covering if and only if, denoting by
\[
\exists_{e_i}:L(e_i)\to L(e)
\]
the left adjoints to the transition maps $L(e_i):L(e)\to L(e_i)$, $l=\bigvee \exists_{e_i}(l_i)$ in $L(e)$.

As we shall see in the proof of Theorem \ref{thmmainexistential}, the Beck-Chevalley condition and the Frobenius reciprocity law ensure that $J_{L}^{\textup{ext}}$ is a well-defined Grothendieck topology on ${\cal G}(L)$.

As it is well known, localic geometric morphisms towards a topos $\cal E$ can all be represented as toposes of sheaves on an internal locale to $\cal E$. In fact, there is an equivalence between internal locales in $\cal E$ and localic morphisms to $\cal E$:

\begin{prop}\label{propequivlocalicmorphisms}
	Let $\cal E$ be a Grothendieck topos. Then there is an equivalence
	\[
	\textup{Loc}({\cal E}) \simeq \mathfrak{Loc}\slash {\cal E} 
	\]
	where $\textup{Loc}({\cal E})$ is the category of internal locales in $\cal E$ and the slice category $\mathfrak{Loc}\slash {\cal E}$, where $\mathfrak{Loc}$ is the category of Grothendieck toposes and localic morphisms between them, given by the following correspondences:

Given an internal locale $L$ in $\cal E$, we associate with it the structure morphism $\Sh_{\cal E}(L)\to {\cal E}$, which can be represented as $C_{p_{L}}:\Sh({\cal G}(L), J_{L}^{\textup{ext}}) \to ({\cal E}, J^{\textup{can}}_{\cal E})$, where $p_{L}$ is the canonical projection functor.

Conversely, given a (localic) geometric morphism $f:{\cal F}\to {\cal E}$, we associate with it the functor $L_{f}:{\cal E}^{\textup{op}}\to \CAT$ sending each $E\in {\cal E}$ to the frame $\Sub_{\cal F}(f^{\ast}(E))$, which yields an internal locale in $\cal E$.
\end{prop}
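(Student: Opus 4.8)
The plan is to exhibit the assignment $\Phi\colon L\mapsto [f_{L}\colon \Sh_{\cal E}(L)\to {\cal E}]$ as an equivalence and to identify the opposite assignment $\Psi\colon [f]\mapsto L_{f}$ as a quasi-inverse. Concretely, I would prove that $\Phi$ is well-defined, fully faithful and essentially surjective; essential surjectivity will be witnessed precisely by $\Psi$, so that the two correspondences are mutually inverse.

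First I would check that $\Phi$ lands in $\mathfrak{Loc}\slash{\cal E}$: the structure morphism $f_{L}=C_{p_{L}}\colon \Sh({\cal G}(L), J_{L}^{\textup{ext}})\to {\cal E}$ is localic, since every object of $\Sh_{\cal E}(L)$ is a subquotient of an object pulled back from $\cal E$, the objects of ${\cal G}(L)$ being subobjects of such objects. An internal locale homomorphism $L\to L'$ supplies an internal frame homomorphism in the opposite direction and hence a geometric morphism $\Sh_{\cal E}(L)\to \Sh_{\cal E}(L')$ over $\cal E$, giving the action of $\Phi$ on morphisms. Full faithfulness is then immediate from Corollary \ref{cor:fullandfaithfulnessinternallocales}, which identifies $\Topos\slash{\cal E}(\Sh_{\cal E}(L), \Sh_{\cal E}(L'))$ with $\Loc({\cal E})(L, L')$; since the two structure morphisms are localic, this equivalence of hom-categories restricts to the hom-categories of $\mathfrak{Loc}\slash{\cal E}$.

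For essential surjectivity I would start from an arbitrary localic morphism $f\colon {\cal F}\to {\cal E}$ and first verify that $L_{f}$ satisfies the axioms of Definition \ref{definternallocale}: each $\Sub_{\cal F}(f^{\ast}(E))$ is a frame, the pullback functors $(f^{\ast}(e))^{\ast}$ are frame homomorphisms with left adjoints $\exists_{f^{\ast}(e)}$, the underlying presheaf is a canonical sheaf, and the Beck--Chevalley and Frobenius conditions follow from $f^{\ast}$ being cartesian together with the standard calculus of images in a topos. By the discussion preceding the statement, the topology induced by $J_{f}$ on ${\cal G}(L_{f})$ coincides with $J_{L_{f}}^{\textup{ext}}$. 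Invoking the canonical relative site presentation $f\cong C_{\pi_{\cal E}}$, Proposition \ref{prop:densenessrelsitelocalicmorphism} (which makes ${\cal G}(L_{f})\cong (1_{\cal F}\downarrow^{\Sub}f^{\ast})$ a $J_{f}$-dense, finite-limit-closed subcategory) and the Comparison Lemma, I obtain
\[
{\cal F}\simeq \Sh((1_{\cal F}\downarrow f^{\ast}), J_{f})\simeq \Sh({\cal G}(L_{f}), J_{L_{f}}^{\textup{ext}})=\Sh_{\cal E}(L_{f})
\]
over $\cal E$, so $[f]\cong \Phi(L_{f})$ in $\mathfrak{Loc}\slash{\cal E}$.

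The main obstacle will be the bookkeeping over $\cal E$ rather than any single hard computation: I must ensure that the Comparison-Lemma equivalence is an equivalence of \emph{relative} toposes, i.e. that it commutes with the two structure morphisms to $\cal E$, and that the quasi-inverse to $\Phi$ produced abstractly from full faithfulness and essential surjectivity is genuinely naturally isomorphic to $\Psi$. The latter reduces to checking the unit isomorphism $L_{f_{L}}\cong L$, namely that the subobjects of $f_{L}^{\ast}(E)$ in $\Sh_{\cal E}(L)$ recover the frame $L(E)$ naturally in $E$, which follows from the explicit description of $\Sh_{\cal E}(L)$ as sheaves on $({\cal G}(L), J_{L}^{\textup{ext}})$.
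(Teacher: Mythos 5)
Your skeleton follows the same route as the paper's own proof: localicity of $C_{p_{L}}$ from the structure of ${\cal G}(L)$ (the paper gets this from faithfulness of $p_{L}$ via Proposition 7.11 of the denseness paper, but your subobject argument is the same phenomenon), internality of $L_{f}$ from the categorical calculus of a geometric morphism, the isomorphism $[f]\cong [C_{p_{L_{f}}}]$ via Proposition \ref{prop:densenessrelsitelocalicmorphism} and the Comparison Lemma together with the identification of the induced topology with $J^{\textup{ext}}_{L_{f}}$, and full faithfulness via Corollary \ref{cor:fullandfaithfulnessinternallocales} (your remark that the hom-categories of $\Topos\slash{\cal E}$ and of $\mathfrak{Loc}\slash{\cal E}$ agree, a morphism over $\cal E$ between localic toposes being automatically localic, is exactly the paper's Remark (b)).

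The gap is in your final sentence. The unit isomorphism $L_{f_{L}}\cong L$, i.e. that $\Sub_{\Sh_{\cal E}(L)}(f_{L}^{\ast}(E))\cong L(E)$ naturally in $E$, does \emph{not} simply ``follow from the explicit description of $\Sh_{\cal E}(L)$ as sheaves on $({\cal G}(L), J_{L}^{\textup{ext}})$''. What that description yields (this is Proposition \ref{prop:unitfibredpreorder} in the paper) is that $\Sub_{\Sh_{\cal E}(L)}(f_{L}^{\ast}(E))$ is the frame of $J^{\textup{ext}}_{L}$-closed subobjects of the presheaf $\Hom_{\cal E}(p_{L}(-), E)$, together with a canonical comparison map out of $L(E)$. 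Proving that this map is an isomorphism --- in particular that every $J^{\textup{ext}}_{L}$-closed subobject is \emph{principal}, i.e. generated by a single element of $L(E)$ --- is the substantive point: it genuinely uses the internal-locale axioms (Beck--Chevalley, Frobenius, the frame joins), and it is false for a general fibred preorder site. This is exactly where the paper invests real machinery: its proof invokes Proposition \ref{propsitecharinternallocales}, which is established via the hyperconnectedness of the fiber comparison morphisms of Proposition \ref{propfibers} and a localic-reflection argument. To close the gap you should either carry out such an argument (or cite that proposition), or --- more economically, and available within your own setup --- drop the direct verification altogether: once you know $\Phi$ is fully faithful and $\Phi\Psi\cong 1$ (your essential surjectivity, witnessed by $\Psi$), the natural isomorphism $\Psi\Phi\cong 1$ follows by the standard formal argument that a fully faithful functor with a one-sided quasi-inverse is an equivalence with that functor as quasi-inverse; so the unit isomorphism is a consequence of what you have already proved, not an extra fact needing the hand-waved justification.
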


\begin{proof}
	The morphism $C_{p_{L}}$ is localic by Proposition 7.11 \cite{denseness}, since $p_{L}:{\cal G}(L)\to {\cal E}$ is clearly faithful. 
	
	The fact that, for any morphism $f$, $L_{f}$ is an internal locale, that is, satisfies the conditions of Definition \ref{definternallocale}, follows at once from the categorical properties of the geometric morphism $f$.
	
	It remains to show that the two correspondences are quasi-inverse to each other. Given a localic morphism $f:{\cal F}\to {\cal E}$, the fact that $f\cong C_{p_{L_{f}}}$ over $\cal E$ follows from the above discussion. Given an internal locale $L$ in $\cal E$, we want to show that the canonical morphism 
	\[
	\eta_{L}:L \to L_{C_{p_{L}}}
	\] 
	of Proposition \ref{prop:unitfibredpreorder} is an isomorphism. But this follows from Proposition \ref{propsitecharinternallocales}.
	
	The functoriality of the construction of $L_{f}$ from $f$ is clear, while functoriality of the assignment $L \mapsto \Sh_{\cal E}(L)$, as well as its fullness and faithfulness, follows from Corollary \ref{cor:fullandfaithfulnessinternallocales}.	
\end{proof}

\begin{remarks}
	\begin{enumerate}[(a)]
		\item The content of Proposition \ref{propequivlocalicmorphisms} is essentially, except for the functorial part, that of Lemma 1.2 of \cite{JohnstoneFactI}. However, in that paper the construction of the two functors is presented in non-explicit terms (the localic morphism corresponding to an internal locale is described as \ac the topos of sheaves on the internal locale with respect to the canonical topology', and there is no indication of a functor between sites inducing this morphism; on the other hand, the internal locale corresponding to a localic morphism $f:{\cal F}\to {\cal E}$ is presented therein as $f_{\ast}(\Omega)$, where $\Omega$ is the subobject classifier in $\cal F$). In other words, our treatment differs from Johnstone's one in being fully explicit and written in the language of stacks and indexed categories rather than in that of internal categories, according to the foundations for relative topos theory introduced in \cite{CaramelloZanfa}. Our explicit, site-theoretic treatement notably allows us to obtain specific results about these concepts, of which a selection is provided below. 
		
		\item The category $\mathfrak{Loc}\slash {\cal E}$ coincides with the category having an objects the localic morphisms towards $\cal E$ and as arrows \emph{all} the morphisms between them. Indeed, as remarked in \cite{JohnstoneFactI} is that if $g\circ f$ is localic and $f$ is localic then $g$ is localic (this is actually a formal consequence of the fact that, as shown in \cite{JohnstoneFactI}, localic morphisms form the right class of an orthogoncal factorization system). 
		
		\item It follows from Proposition \ref{propequivlocalicmorphisms} that every the transition morphisms of an internal locale admit not only left adjoints, but also right adjoints; indeed, this is clearly the case for all the internal locales of the form $L_{f}$ for a geometric morphism $f$ and every internal locale is, up to isomorphism, of this form.
		
		\item The notion of internal locale, as specified by the above axioms, can be seen as an axiomatisation for the hyperdoctrines (in the sense of \cite{hyperdoctrines}) arising from (localic) geometric morphisms. We shall obtain a topos-theoretic analogue of this below (in the context of existential fibred sites).   
	\end{enumerate}
\end{remarks}

\begin{defn}\label{defreducedrelativesite}
	For a localic geometric morphism $f:{\cal F}\to {\cal E}$, we shall call the site $p_{L_{f}}:({\cal G}(L_{f}), J^{\textup{can}}_{L_{f}}) \to ({\cal E}, J^{\textup{can}}_{\cal E})$ over $\cal E$ the \emph{reduced relative site} of $f$. 
\end{defn}

We have seen that the inclusion $i_{f}:{\cal G}(L_{f}) \hookrightarrow (1_{\cal F}\downarrow f^{\ast})$ is $J_{f}$-dense. In fact, it is both a morphism and a comorphism of sites $i_{f}:({\cal G}(L_{f}), J^{\textup{can}}_{L_{f}}) \hookrightarrow ((1_{\cal F}\downarrow f^{\ast}), J_{f})$ inducing equivalences of toposes $\Sh(i_{f})$ $C_{i_{f}}$ which are quasi-invese to each other. In fact, $i_{f}$ is a morphism of fibrations over $\cal E$.

Still, the canonical relative site of $f$ and the reduced relative site of $f$ are also related by another functor, namely the functor $t_{f}:(1_{\cal F}\downarrow f^{\ast}) \to {\cal G}(L_{f})$ sending and object $(F, E, \alpha:F \to f^{\ast}(E))$ to the object $(\dom(\textup{Im}(\alpha)), E,  \textup{Im}(\alpha))$ of ${\cal G}(L_{f})$ and acting on the arrows accordingly. This functor is actually a morphism of fibrations as well as a comorphism of sites $((1_{\cal F}\downarrow f^{\ast}), J_{f}) \to ({\cal G}(L_{f}), J^{\textup{can}}_{L_{f}})$. Since $t_{f}\circ i_{f}=1_{{\cal G}(L_{f})}$, it follows, by functoriality of the construction of the geometric morphism induced by a comorphism of sites, that $C_{t_{f}}$ provides a quasi-inverse to $C_{i_{f}}$. 

Summarizing, we have the following result:

\begin{prop}
	Let $f:{\cal F}\to {\cal E}$ be a localic geometric morphism. Then $f$ can be represented in terms of relative sites as displayed in the following diagram:
	\[\begin{tikzcd}
		{\textup{\bf Sh}({\cal G}(L_{f}), J^{\textup{can}}_{L_{f}})} & {} & {\textup{\bf Sh}((1_{\cal F}\downarrow f^{\ast}), J_{f})} && {{\cal F}} \\
		\\
		&&& {{\cal E}}
		\arrow["f", from=1-5, to=3-4]
		\arrow["{C_{\pi_{\cal F}}}", from=1-3, to=1-5]
		\arrow["\sim"', from=1-3, to=1-5]
		\arrow["{C_{\pi_{\cal E}}}"', from=1-3, to=3-4]
		\arrow["{C_{i_{f}}}", shift left=1, from=1-1, to=1-3]
		\arrow["{C_{p_{L_{f}}}}"', from=1-1, to=3-4]
		\arrow["{C_{t_{f}}}", shift left=1, from=1-3, to=1-1]
	\end{tikzcd}\]
	
\end{prop}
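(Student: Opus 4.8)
The statement only \emph{collects} the facts established in the preceding paragraphs, so the plan is to read off each edge of the diagram and then check that the two triangles commute over $\cal E$ up to canonical isomorphism; no new construction is required, only a careful bookkeeping of the geometric morphisms induced by the comorphisms of sites appearing in the discussion.

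First I would settle the right-hand triangle. The identification $f\cong C_{\pi_{\cal E}}$ is exactly the theorem recalled above (cf.\ Theorem 3.16 of \cite{denseness}), which moreover tells us that $\pi_{\cal E}:((1_{\cal F}\downarrow f^{\ast}), J_{f})\to ({\cal E}, J^{\textup{can}}_{\cal E})$ is a comorphism of sites. For the edge $C_{\pi_{\cal F}}$ I would invoke the very definition of the relative topology $J_{f}$, which is the topology lifted along $\pi_{\cal F}$ from the canonical topology of $\cal F$: this makes $\pi_{\cal F}:((1_{\cal F}\downarrow f^{\ast}), J_{f})\to ({\cal F}, J^{\textup{can}}_{\cal F})$ a comorphism of sites, and the fact that $C_{\pi_{\cal F}}$ is an \emph{equivalence} is a consequence of the canonical-stack analysis of $f$ (the fibres $(F\downarrow f^{\ast})$ of $\pi_{\cal F}$ are cofiltered, so that $J_{f}$-sheaves on $(1_{\cal F}\downarrow f^{\ast})$ are precisely objects of $\cal F$). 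The commutativity $f\circ C_{\pi_{\cal F}}\cong C_{\pi_{\cal E}}$ then expresses, at the level of induced geometric morphisms, the comparison between the two projections out of the comma category.

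Next I would assemble the left-hand part, which is essentially already done: the inclusion $i_{f}:({\cal G}(L_{f}), J^{\textup{can}}_{L_{f}})\hookrightarrow ((1_{\cal F}\downarrow f^{\ast}), J_{f})$ was shown to be a $J_{f}$-dense morphism and comorphism of sites, and $t_{f}$ a comorphism of sites with $t_{f}\circ i_{f}=1_{{\cal G}(L_{f})}$; hence $C_{i_{f}}$ and $C_{t_{f}}$ are mutually quasi-inverse equivalences. Since $i_{f}$ is a morphism of fibrations over $\cal E$, i.e.\ $\pi_{\cal E}\circ i_{f}=p_{L_{f}}$ as comorphisms of sites, the functoriality of the assignment $G\mapsto C_{G}$ yields $C_{p_{L_{f}}}\cong C_{\pi_{\cal E}}\circ C_{i_{f}}$, which closes the lower triangle.

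I expect the only genuine point of care to be the \emph{coherence of the natural isomorphisms over} $\cal E$: one must verify that the equivalences $C_{\pi_{\cal F}}$, $C_{i_{f}}$, $C_{t_{f}}$ together with the commuting triangles fit together as morphisms of relative toposes, i.e.\ that the comparison $2$-cells all arise from the structural isomorphisms of the comorphisms of sites involved and paste together correctly. Everything else is a direct citation of the results established above.
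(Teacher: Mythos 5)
Your proposal is correct and takes essentially the same route as the paper: there the proposition is stated as a pure summary (no separate proof is given), and it collects exactly the facts you assemble — $f\cong C_{\pi_{\cal E}}$ and the equivalence $C_{\pi_{\cal F}}$ from the canonical relative site theorem (Theorem 3.16 of \cite{denseness}), the fact that $i_{f}$ is a dense morphism and comorphism of sites with $t_{f}\circ i_{f}=1_{{\cal G}(L_{f})}$ from Proposition \ref{prop:densenessrelsitelocalicmorphism} and the ensuing discussion, and the functoriality of $G\mapsto C_{G}$ for the commuting triangles. One small caveat: your parenthetical gloss that cofilteredness of the fibres $(F\downarrow f^{\ast})$ by itself yields $\Sh((1_{\cal F}\downarrow f^{\ast}), J_{f})\simeq {\cal F}$ is not a complete argument (one also needs the fibration structure and the fact that $J_{f}$ is both preserved and reflected by $\pi_{\cal F}$), but since you invoke the established canonical-stack theorem rather than reprove it, this does not affect the correctness of your proof.
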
\qed

\section{Existential fibred sites}\label{sec:existentialfibredsites}

In this section we shall introduce a wide generalization (from the setting of fibred preorder sites to that of fibred sites) of the notion of internal locale and show that, as in the case of internal locales, one can define a Grothendieck topology on the Grothendieck's category of such an indexed category entirely in terms of the structure present on its fibers and the transition morphisms between them (as well as their left adjoints). 

This kind of relative sites is notably relevant in connection with \ac relative dualities', in an analogous way as sites arising by equipping ordinary categories with Grothendieck topologies defined entirely in terms of their categorical structure play an important role in connection with the topos-theoretic understanding of dualities (cf. \cite{caramello2011topostheoretic}). 

We will show that \emph{every} geometric morphism can be represented as the topos of sheaves on an existential fibred site. This representation is actually very natural since, by the relative Diaconescu's theorem established in section \ref{sec:relativeDiaconescu}, the morphisms between relative geometric morphisms correspond precisely to the morphisms between these existential fibred sites. 

This result can be seen as a generalization of the fact that the category of locales (resp. internal locales) embeds fully and faithfully in the category of toposes (resp. in the category of relative toposes), and also of the characterization of morphisms of toposes as morphisms between their canonical sites.

Our construction of the existential topology can also be used to associate toposes with theories formalized as hyperdoctrines (in the sense of \cite{hyperdoctrines}), as the latter give naturally rise to existential fibred sites (one may actually have different hyperdoctines depending on whether the given theory belongs to a smaller fragment of geometric logic such as regular or coherent logic, since the left adjoints exist already at the level of those fragments). In particular, in the context of our development of \ac relative geometric logic', it will be interesting to investigate to which extent geometric theories relative to a base topos $\cal E$ can be presented as existential fibred sites over the canonical site of $\cal E$. As it will be discussed in section \ref{sec:hyperconn}, considering ordinary geometric theories as internal locales to the classifying topos of simpler theories (such as the empty theory over their signature) yields, for instance, alternative sites of definition for their classifying toposes (cf. \cite{WrigleyToposesOnline} and the forthcoming paper by the same author).

\begin{defn}
Let $({\cal C}, J)$ be a small-generated site. 

\begin{enumerate}[(a)]
	\item A \emph{fibred site over ${\cal C}$} (in the sense of the \emph{Exposé VI} of \cite{SGA4_II}) is an indexed category $L:{\cal C}^{\textup{op}} \to \CAT$ taking values in the category of small-generated sites and morphisms of sites between them; we shall denote by $J^{L}_{c}$ the Grothendieck topology on the fiber $L(c)$.
	
	\item A \emph{fibred site over $({\cal C}, J)$} is a $J$-stack $L:{\cal C}^{\textup{op}} \to \CAT$ taking values in the category of small-generated sites and morphisms of sites between them.
	
	\item A \emph{fibred preorder site} is a fibred site which takes values in the category of small-generated preorder sites and morphisms of sites between them.	
	
	\item A fibred site $L:{\cal C}^{\textup{op}} \to \CAT$ over $\cal C$ is said to be \emph{$J$-reflecting}, where $J$ is a Grothendieck topology on $\cal C$, if for any $J$-covering family $S$ on an object $c$ of $\cal C$ and any family $T$ of arrows with common codomain in the category $L(c)$, if $L(f)(T)$ is $J^{L}_{\dom(f)}$-covering in the category $L(\dom(f))$ for every $f\in S$ then $T$ is $J^{L}_{c}$-covering (note that the converse implication always holds since by our hypotheses the transition functors are cover-preserving).
	
	\item A fibred site $L:{\cal C}^{\textup{op}} \to \CAT$ over ${\cal C}$ is said to be \emph{existential} if for any arrow $a:E' \to E$ in $\cal C$, the transition functor $L(a):L(E)\to L(E')$ has a left adjoint, denoted $\exists_{a}:L(E')\to L(E)$, and the following two conditions are satisfied ($\eta_{f}$ denotes the unit of the adjunction $\exists_f \dashv L(f)$, for each arrow $f$, and $\overline{(-)}$ denotes the operation of transposition along the adjunction):

	\begin{enumerate}[(i)]
		\item \emph{Relative Beck-Chevalley condition:}
	 
		For any arrows $c:V\to Z$ and $d:W\to Z$ in $\cal C$ with common codomain and any $l\in L(W)$, the family of arrows $$\{\overline{\widetilde{L(a)(\eta_{c}(l))}}:(\exists_{b})(L(a)(l)) \to L(d)(\exists_c(l)) \mid (a, b)\in  B_{(c, d)} \}$$ is $J^{L}_{V}$-covering, where $B_{(c, d)}$ is the collection of spans $(a:U\to V, b:U\to W)$ such that $c\circ a=d\circ c$
		\[\begin{tikzcd}
			U & V \\
			W & Z
			\arrow["c", from=1-2, to=2-2]
			\arrow["d", from=2-1, to=2-2]
			\arrow["a", from=1-1, to=1-2]
			\arrow["b"', from=1-1, to=2-1]
		\end{tikzcd}\]
		
		and $\overline{\widetilde{L(a)(\eta_{c}(l))}}$ is the transpose of the arrow
		\[
		\widetilde{L(a)(\eta_{c}(l))}: L(a)(l) \to L(b)(L(d)(\exists_c(l)))
		\]
		given by the composite of the arrow $L(a)(\eta_{c}(l))$ with the inverse of the isomorphism
		\[
		L(b)(L(d)(\exists_c(l))) \to L(a)(L(c)(\exists_c(l))) 
		\]
	    resulting from the equality $c\circ a=d\circ b$ in light of the pseudofonctoriality of $L$. 
		
		\item \emph{Relative Frobenius condition:} For any arrows $f:E\to E'$ in $\cal C$, any $l\in L(E)$ and any arrow $\alpha:l'\to \exists_{f}(l)$, the family of arrows $\{\overline{\delta}:\exists_{f}(m) \to l' \mid  (\delta, \rho)\in Q_{(f, l, \alpha)}\}$ is $J^{L}_{E'}$-covering, where $Q_{(f, l, \alpha)}$ is the collection of spans of arrows $(\rho:m\to l, \delta:m\to L(f)(l'))$ in $L(E)$ which make the rectangle
		\[\begin{tikzcd}
		m && l \\
		{L(f)(l')} && {L(f)(\exists_f(l))}
		\arrow["{\eta_{f}(l)}", from=1-3, to=2-3]
		\arrow["\rho", from=1-1, to=1-3]
		\arrow["\delta"', from=1-1, to=2-1]
		\arrow["{L(f)(\alpha)}"', from=2-1, to=2-3]
		\end{tikzcd}\]
		commute (that is, such that $\alpha\circ \overline{\delta}=\exists_{f}(\rho)$)
		 
		[Note that this is a form of denseness of the functor $\exists_f$ in the sense of \cite{denseness}, since it states that any arrow $\alpha$ to an object in the image of $\exists_f$ can be locally represented in terms of arrows in the image of $\exists_f$.]
	\end{enumerate} 

\item An existential fibred site is said to be \emph{open} if, for any arrow $a:E'\to E$ in $\cal C$, the functor $\exists_{f}:(L(E'), J^{L}_{E'})\to (L(E), J^{L}_{E})$ is cover-preserving. 

\item A \emph{morphism $\alpha:L \to L'$ of existential fibred sites} over $({\cal C}, J)$ is a morphism of fibrations $L \to L'$ over $\cal C$ such that for each $E\in {\cal C}$, $\alpha_{E}$ is a morphism of sites $(L(E), J^{L}_{E})\to (L'(E), J^{L'}_{E'})$ and for any arrow $a:E' \to E$ in $\cal C$ the following diagram commutes up to isomorphism:
\[\begin{tikzcd}
	{L(E')} & {L'(E')} \\
	{L(E)} & {L'(E)}
	\arrow["{\exists^{L}_{a}}"', from=1-1, to=2-1]
	\arrow["{\exists^{L'}_{a}}", from=1-2, to=2-2]
	\arrow["{\alpha_{E}}", from=1-1, to=1-2]
	\arrow["{\alpha_{E'}}", from=2-1, to=2-2]
\end{tikzcd}\]

\end{enumerate}
\end{defn}

\begin{thm}\label{thmmainexistential}
	Let $({\cal C}, J)$ be a small-generated site and $L:{\cal C}^{\textup{op}} \to \CAT$ an existential fibred site over ${\cal C}$. Then the families on the category ${\cal G}(L)$ of the form 
	\[
	\{(e_i, \alpha_i):(E_i, l_i)\to (E, l) \mid i\in I\}
	\] 
	 where the family $\{\overline{\alpha_i}:\exists_{e_i}(l_i) \to l \mid i\in I\}$ is $J^{L}_{L(e)}$-covering are the covering families for a Grothendieck topology $J_{L}^{\textup{ext}}$, called the \emph{existential topology}, on ${\cal G}(L)$.
	 
	 Conversely, if $L$ is a fibred site over $\cal C$ with left adjoints $\exists_f$ to the transition functors $L(f)$ for any arrow $f$ in $\cal C$ such that the families 	
	 \[
	 \{(e_i, \alpha_i):(E_i, l_i)\to (E, l) \mid i\in I\}
	 \] 
	 such that $\{\overline{\alpha_i}:\exists_{e_i}(l_i) \to  l \mid i\in I\}$ is $J^{L}_{L(e)}$-covering form a Grothendieck topology on ${\cal G}(L)$ then $L$ is existential. 
\end{thm}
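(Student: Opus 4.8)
The plan is to prove the two implications separately, in both directions exploiting three standing facts: each fibre topology $J^{L}_{E}$ is a genuine Grothendieck topology, the transition functors $L(e)$ are cover-preserving, and their left adjoints $\exists_{e}$ assemble into a covariant pseudofunctor, so that the transpose of a composite arrow of ${\cal G}(L)$ factors as $\overline{L(e')(\alpha)\circ\beta}=\overline{\alpha}\circ\exists_{e}(\overline{\beta})$. For the forward direction I would verify, for the sieves generated by the given families, the three axioms of a Grothendieck topology on ${\cal G}(L)$. Maximality is immediate, since the identity family on $(E,l)$ has transpose $\overline{1_{l}}\colon\exists_{1_{E}}(l)\to l$, an isomorphism, which generates the maximal (hence $J^{L}_{E}$-covering) sieve.

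For stability under pullback I would transcribe the locale-case computation, with covering families replacing joins and fibre pullbacks replacing meets. Given a covering family $\{\overline{\alpha_{i}}\colon\exists_{e_{i}}(l_{i})\to l\}$ and an arrow $(d,\beta)\colon(Z,m)\to(E,l)$, I would apply $L(d)$ and invoke the \emph{relative Beck--Chevalley condition} to cover each $L(d)\exists_{e_{i}}(l_{i})$ by objects $\exists_{b}L(a)(l_{i})$ indexed by the spans $(a,b)\in B_{(e_{i},d)}$ — this avoids assuming that ${\cal C}$ has pullbacks — and then use the \emph{relative Frobenius condition} to cut these down along $\beta$, landing on arrows into $m$ of the prescribed shape; transitivity of $J^{L}_{E}$ then assembles the result into a $J_{L}^{\textup{ext}}$-cover of $(Z,m)$.

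The delicate axiom, and the main obstacle, is local character, precisely because the $\exists_{e}$ are \emph{not} assumed cover-preserving (the site need not be open). Composing a cover $\{\overline{\alpha_{i}}\colon\exists_{e_{i}}(l_{i})\to l\}$ with covers $\{\overline{\beta_{ij}}\colon\exists_{e_{ij}}(l_{ij})\to l_{i}\}$ produces the arrows $\overline{\alpha_{i}}\circ\exists_{e_{i}}(\overline{\beta_{ij}})$, and one must see that these generate a $J^{L}_{E}$-covering sieve on $l$ without simply pushing the fibre-covers of the $l_{i}$ forward along $\exists_{e_{i}}$. Here I would use the reading of the relative Frobenius condition as \emph{density of $\exists_{e}$ in the sense of} \cite{denseness}: the Comparison-Lemma machinery there detects covering sieves on an object $\exists_{e}(l_{i})$ by testing against the dense family of $\exists_{e}$-images, so that $\langle\exists_{e_{i}}(\overline{\beta_{ij}})\rangle$ is $J^{L}_{E}$-covering as soon as $\{\overline{\beta_{ij}}\}$ is $J^{L}_{E_{i}}$-covering. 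This turns the apparently circular push-forward of covers into a single appeal to denseness, after which transitivity of $J^{L}_{E}$ closes the argument.

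For the converse I would run these computations in reverse, now \emph{assuming} that the collection is a Grothendieck topology and deducing the two structural conditions from its stability and local-character axioms applied to covers built from the adjunction units. The relevant manifest covers are the singleton families $\{(f,\eta_{f}(l))\colon(E,l)\to(E',\exists_{f}(l))\}$ — covering because their transposes are identities — together with their pullbacks along arrows of the form $(\id_{E'},\alpha)\colon(E',l')\to(E',\exists_{f}(l))$; unwinding the composition rule in ${\cal G}(L)$ identifies the arrows occurring in these pullback (and local-character) sieves with the spans in $Q_{(f,l,\alpha)}$, and the covering condition then reads as the relative Frobenius assertion. Relative Beck--Chevalley is extracted in the same spirit using the span data recorded in $B_{(c,d)}$. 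The real content of this direction is the bookkeeping that matches the sieves certified as covering by the axioms with the precise index sets $Q_{(f,l,\alpha)}$ and $B_{(c,d)}$, for which the explicit behaviour of transposition under the pseudofunctoriality isomorphisms of $L$ is again the essential tool.
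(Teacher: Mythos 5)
You have located the real pressure point correctly, and in fact more honestly than the text you were reconstructing: the paper's own proof spends essentially all of its effort on pullback stability, which it organizes by factoring every arrow of ${\cal G}(L)$ as a vertical arrow following a unit arrow $(e, \eta_{e}(l'))$ and then showing that the two nontrivial stability cases (units pulled back along horizontal arrows, units pulled back along vertical arrows) are \emph{equivalent} to the relative Beck--Chevalley and relative Frobenius conditions respectively --- an equivalence which also delivers the converse direction exactly as in your last paragraph. Stability under multicomposition, your ``local character'', is dismissed there in a single sentence as easy to see. Your instinct that this is the delicate axiom, precisely because the $\exists_{e}$ are not assumed cover-preserving, is therefore sharp; your treatment of maximality, stability and the converse is essentially parallel to the paper's.

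The gap is in your repair of that step, and it is fatal as written. The inference you attribute to ``relative Frobenius as denseness'' --- that $\langle \exists_{e_i}(\overline{\beta_{ij}})\rangle$ is $J^{L}_{E}$-covering as soon as $\{\overline{\beta_{ij}}\}$ is $J^{L}_{E_i}$-covering --- is word-for-word the condition that $\exists_{e_i}$ be cover-preserving, i.e.\ that the existential fibred site be \emph{open}, a property the paper introduces afterwards as a separate strengthening and verifies by hand for the sites $E_{f}$. It does not follow from the two relative conditions, and no Comparison-Lemma argument can produce it: denseness of a functor transfers covers only when the topology on the codomain is the one \emph{induced} by the dense functor, whereas here $J^{L}_{E}$ and $J^{L}_{E_i}$ are independent data. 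Note that the relative Frobenius instance based at $l'=\exists_{e_i}(l_i)$ with $\alpha = 1_{\exists_{e_i}(l_i)}$ supplies only the family $\{\exists_{e_i}(\rho) \mid \rho:m\to l_i \hbox{ arbitrary}\}$, which contains an identity and hence carries no information about the given cover. Worse, the implication you need is actually false under the stated hypotheses: take ${\cal C}=\twocat$ with arrow $f:E\to E'$, let $L(E')$ be the chain $0<1$ with its trivial topology, let $L(E)$ be the poset $\{\bot < z, m < \top\}$ (with $z,m$ incomparable) topologized by declaring a sieve covering if and only if it contains $\bot$, and let $L(f)(0)=z$, $L(f)(1)=\top$, so that $\exists_{f}$ sends $\bot,z$ to $0$ and $m,\top$ to $1$. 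One checks that this is an existential fibred site (the element $\bot$ witnesses every covering required by relative Beck--Chevalley and every span required by relative Frobenius), yet the covering sieve $\{\bot \leq m\}$ of $m$ is sent by $\exists_{f}$ to $\{0\leq 1\}$, which does not cover $1$; correspondingly, the multicomposite of the two $J_{L}^{\textup{ext}}$-covers $\{(f,\eta_{f}(m))\}$ and $\{(1_{E},\bot\leq m)\}$ has sole transpose $0\leq 1$ and is not of the prescribed form, so local character genuinely fails. Consequently this step cannot be extracted from relative Beck--Chevalley and relative Frobenius alone; it requires an additional hypothesis such as openness (or fibre topologies induced along the $\exists_{e}$), a caveat which applies equally to the multicomposition claim that the paper's proof leaves to the reader.
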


\begin{proof}
	It is easy to see that the families in $J_{L}^{\textup{ext}}$ are stable under multicomposition and that any family through which a family in $J_{L}^{\textup{ext}}$ factors lies also in $J_{L}^{\textup{ext}}$. So, $J_{L}^{\textup{ext}}$ is a Grothendieck topology if and only if it satisfies the pullback stability property. 
	
	Since every arrow $\{(e, \alpha):(E', l')\to (E, l)\}$ in ${\cal G}(L)$ can be written as the composite of the arrow $(e, \eta_{e}(l'):(E', l') \to (E, \exists_{e}(l')))$ with the arrow $(1_{E}, \overline{\alpha}):(E, \exists_{e}(l')) \to (E, l)$, where $\overline{\alpha}$ is the transpose of the arrow $\alpha:l'\to L(e)(l)$ along the adjunction $(\exists_{e} \dashv L(e))$, it is sufficient (and necessary) to show, in light of the behaviour of the multicomposition operation for sieves with respect to the pullback operation, that the pullbacks of families in a fiber which are covering in the topology on the fiber are covering and that pullbacks of arrows of the form $(e, \eta_{e}(l'))$ are covering. Now, the fact that pullbacks of covering families in a fiber along an horizontal arrow in ${\cal G}(L)$ are covering follows from the fact that transition morphisms are cover-preserving, while the fact that the pullbacks of covering families in a fiber along a vertical arrow in ${\cal G}(L)$ are covering follows from the fact that the topology on the fiber satisfies the pullback-stability axiom. It thus remains to consider the other two cases, that is, the pullbacks of an arrow of the form $(e, \eta_{e}(l'))$ along horizontal arrows and the pullbacks of an arrow of the form $(e, \eta_{e}(l'))$ along vertical arrows. Let us show that   
	
	\begin{enumerate}[(1)]
		\item The relative Beck-Chevalley condition states precisely that for any arrows $c:V\to Z$ and $d:W\to Z$ in $\cal C$ and any $l\in L(V)$, the sieve $(d, 1)^{\ast}(<(c, \eta_{c}(l))>)$ is $J_{L}^{\textup{ext}}$-covering:
		
		\[\begin{tikzcd}
			& {(V, l)} \\
			{(W, L(d)(\exists_c(l)))} & {(Z, \exists_c(l))}
			\arrow["{(c, \exists_c(l))}", from=1-2, to=2-2]
			\arrow["{(d, 1)}", from=2-1, to=2-2]
		\end{tikzcd}\] 
		
		
		\item The relative Frobenius condition states precisely that for any arrow $f:E\to E'$ in $\cal C$ and any arrow $\alpha:l'\to \exists_{f}(l)$, the sieve $$(1_{E'}, \alpha)^{\ast}(<(f, \eta_{f}(l))>)$$ is $J_{L}^{\textup{ext}}$-covering:
		
		\[\begin{tikzcd}
			&& {(E, l)} \\
			{(E', l')} && {(E', \exists_f(l))}
			\arrow["{(f, \eta_{f}(l))}", from=1-3, to=2-3]
			\arrow["{(1_{E'}, \alpha)}", from=2-1, to=2-3]
		\end{tikzcd}\]
	\end{enumerate}  
	
	(1) The condition that for any arrows $c:V\to Z$ and $d:W\to Z$ in $\cal C$ and any $l\in L(V)$, the sieve $(d, 1)^{\ast}(<(c, \eta_{c}(l))>)$ be $J_{L}^{\textup{ext}}$-covering is equivalent to requiring that the family of arrows $$\{\overline{\chi}:(\exists_{b})(l') \to L(d)(\exists_c(l)) \mid (\chi, \xi, l', a, b)\in  B_{(c, d, l)} \}$$ be $J^{L(V)}$-covering, where $B_{(c, d, l)}$ is the collection of quintuples $(a:U\to V, b:U\to W, l'\in L(U), \chi:l'\to L(b)(L(d)(\exists_c(l)))\cong L(a)(L(c)(\exists_c(l))), \xi:l'\to L(a)(l))$ such that the following diagrams commute:
	 
	 \[\begin{tikzcd}
	 	U & V && {l'} & {L(a)(l)} \\
	 	W & Z && {L(b)(L(d)(\exists_c(l)))} & {L(a)(L(c)(\exists_c(l)))}
	 	\arrow["a", from=1-1, to=1-2]
	 	\arrow["b"', from=1-1, to=2-1]
	 	\arrow["d"', from=2-1, to=2-2]
	 	\arrow["c", from=1-2, to=2-2]
	 	\arrow["\chi"', from=1-4, to=2-4]
	 	\arrow["\xi", from=1-4, to=1-5]
	 	\arrow["\cong", from=2-4, to=2-5]
	 	\arrow["{L(a)(\eta_{c}(l))}", from=1-5, to=2-5]
	 \end{tikzcd}\]
(where the isomorphism in the square on the right-hand side results from the equality $c\circ a=d\circ b$ in light of the pseudofonctoriality of $L$); indeed, the arrows $(b, \chi)$ belonging to this sieve are parametrized by the quintuples in $B_{(c, d, l)}$:
	\[\begin{tikzcd}
		{(U, l')} & {(V, l)} \\
		{(W, L(d)(\exists_c(l)))} & {(Z, \exists_c(l))}
		\arrow["{(c, \exists_c(l))}", from=1-2, to=2-2]
		\arrow["{(d, 1)}", from=2-1, to=2-2]
		\arrow["{(a, \xi)}", from=1-1, to=1-2]
		\arrow["{(b, \chi)}", from=1-1, to=2-1]
	\end{tikzcd}\]
But every arrow $\overline{\chi}$ as above factors through the arrow
	\[
	\widetilde{L(a)(\eta_{c}(l))}: L(a)(l) \to L(b)(L(d)(\exists_c(l)))
	\]
	given by the composite of the arrow $L(a)(\eta_{c}(l))$ with the inverse of the isomorphism
	\[
	L(b)(L(d)(\exists_c(l))) \to L(a)(L(c)(\exists_c(l))) 
	\]
	appearing in the above square, so the condition that the family of such arrows be covering amounts precisely to the condition that such arrows yield a covering family, that is to the condition in the statement of the relative Beck-Chevalley condition.
	
	(2) The condition that the family of arrows $\{\overline{\delta}:\exists_{f}(m) \to l' \mid  (\delta, \rho)\in Q_{(f, \alpha)}\}$ be $J^{L(E')}$-covering is clearly equivalent to the requirement that
	the family
	\[
	\{(f, \delta):(E, m) \to (E', l') \mid (\delta, \rho)\in Q_{(f, \alpha)}\},
	\] 
	be $J_{L}^{\textup{ext}}$-covering, and the commutativity of the square appearing in the condition amounts precisely to that of the following diagram:
	\[\begin{tikzcd}
		{(E, m)} && {(E, l)} \\
		{(E', l')} && {(E', \exists_f(l))}
		\arrow["{(f, \eta_f(l))}", from=1-3, to=2-3]
		\arrow["{(f, \delta)}", from=1-1, to=2-1]
		\arrow["{(1_E, \rho)}", from=1-1, to=1-3]
		\arrow["{(1_E', \alpha)}", from=2-1, to=2-3]
	\end{tikzcd}\]
	
	On the other hand, as shown by the following diagram, an arbitrary commutative square always factors through one of the above form:
	\[\begin{tikzcd}
		{(E'', l'')} && {(E, l)} \\
		& {(E, \exists_a(l''))} \\
		{(E', l')} && {(E', \exists_f(l))}
		\arrow["{(f, \eta_f(l))}", from=1-3, to=3-3]
		\arrow["{(b, \chi)}", from=1-1, to=3-1]
		\arrow["{(a, \xi)}"{pos=0.6}, from=1-1, to=1-3]
		\arrow["{(1_E', \alpha)}"{pos=0.7}, from=3-1, to=3-3]
		\arrow["{(a, \eta_{a}(l''))}"{pos=0.9}, from=1-1, to=2-2]
		\arrow["{(1_E, \overline{\xi})}"'{pos=0.7}, from=2-2, to=1-3]
		\arrow["{(f, \overline{\chi})}"{pos=0.1}, from=2-2, to=3-1]
	\end{tikzcd}\]
	
	Therefore, it suffices to consider sieves of the above form, as any family through which a covering family factors is also covering.
	
	From conditions (1) and (2) our thesis obviously follows.
\end{proof}

The above result justifies the following definition:

\begin{defn}
	Let $({\cal C}, J)$ be a small-generated site and $L:{\cal C}^{\textup{op}} \to \CAT$ an existential fibred site over ${\cal C}$. The relative topos 
	\[
	C_{\pi_{L}}:\Sh({\cal G}(L), J_{L}^{\textup{ext}}) \to \Sh({\cal C}, J)
	\] 
	is called the \emph{existential topos of $L$}.
\end{defn}

\begin{remarks}	
	\begin{enumerate}[(a)]
		\item The existential topology is generated by both \ac vertical' components (provided by the Grothendieck topologies on the fibers) and by \ac horizontal' components (provided by the left adjoints $\exists_e$); in fact, it strictly contains the total topology of the fibered site given by $L$. 	
		
		\item Any morphism of existential fibred sites $L\to L'$ over a small-generated site $({\cal C}, J)$ induces a morphism of sites $({\cal G}(L), J_{L}^{\textup{ext}}) \to ({\cal G}(L'), J_{L'}^{\textup{ext}})$ over $({\cal C}, J)$ and hence a morphism $[C_{\pi_{L'}}]\to [C_{\pi_{L}}]$ of existential toposes over $\Sh({\cal C}, J)$. 
		
		\item Given relative toposes $[f :{\cal F} \to {\cal E}]$ and $[f':{\cal F}\to {\cal E}]$, the
		geometric morphisms $f\to f'$ over $\cal E$ correspond precisely to the
		morphisms of existential fibred sites $E_{f'}\to E_{f}$ (cf. Corollary \ref{correlativemorphisms}).
	\end{enumerate}	
\end{remarks}

In the case of pseudofunctors $L$ defined on a cartesian category $\cal C$ and which the property that each category $L(c)$ has finite limits which are preserved by the transition functors, the relative Beck-Chevalley and the relative Frobenius condition admit simpler reformulations, as shown by the following proposition.

For this, we need a lemma:

\begin{lemma}
	Let $\cal C$ be a cartesian category and $L:{\cal C}^{\textup{op}}\to \CAT$ a $\cal C$-indexed category such that for any $c\in {\cal C}$, $L(c)$ is a cartesian category and for any $f:c\to c'$, $L(f):L(c')\to L(c)$ preserves finite limits. Then the category ${\cal G}(L)$ is cartesian; more specifically, $(1, 1_{L(1)})$ is the terminal object of ${\cal G}(L)$ and for any arrows $(f, \alpha):(c', x')\to (c, x)$ and $(g, \beta):(c'', x'')\to (c, x)$ in ${\cal G}(L)$, the square 
\[\begin{tikzcd}
	{(c'\times_c c'', z)} && {(c'', x'')} \\
	{(c', x')} && {(c, x)}
	\arrow["{(g, \beta)}", from=1-3, to=2-3]
	\arrow["{(f, \alpha)}", from=2-1, to=2-3]
	\arrow["{(\pi'', u'')}", from=1-1, to=1-3]
	\arrow["{(\pi', u')}", from=1-1, to=2-1]
\end{tikzcd}\]
is a pullback in ${\cal G}(L)$, where 
\[\begin{tikzcd}
	{c'\times_c c''} & {c''} \\
	{c'} & c
	\arrow["g", from=1-2, to=2-2]
	\arrow["f"', from=2-1, to=2-2]
	\arrow["{\pi'}"', from=1-1, to=2-1]
	\arrow["{\pi''}", from=1-1, to=1-2]
	\arrow["\lrcorner"{anchor=center, pos=0.125}, draw=none, from=1-1, to=2-2]
\end{tikzcd}\]
	and 
\[\begin{tikzcd}
	z && {L(\pi'')(x'')} \\
	{L(\pi')(x')} && {L(\pi')(L(f)(x))\cong L(\pi'')(L(g)(x))}
	\arrow["{L(\pi'')(\beta)}", from=1-3, to=2-3]
	\arrow["{L(\pi')(\alpha)}", from=2-1, to=2-3]
	\arrow["{u''}", from=1-1, to=1-3]
	\arrow["{u'}", from=1-1, to=2-1]
\end{tikzcd}\]
	are pullback squares respectively in $\cal C$ and in the category $L(c'\times_c c'')$. The element $z$ will be denoted $x'\times_c x''$. 
\end{lemma}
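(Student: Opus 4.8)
The plan is to verify the two universal properties directly inside the Grothendieck construction, recalling that an object of ${\cal G}(L)$ is a pair $(c, x)$ with $c \in {\cal C}$ and $x \in L(c)$, and that an arrow $(f, \alpha) : (c', x') \to (c, x)$ is a morphism $f : c' \to c$ of ${\cal C}$ together with a morphism $\alpha : x' \to L(f)(x)$ of $L(c')$, composition being given by the transition functors and the pseudofunctorial structure isomorphisms of $L$.

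For the terminal object, I would take an arbitrary object $(c, x)$ and analyse the hom-set ${\cal G}(L)((c, x), (1, 1_{L(1)}))$. Such an arrow consists of the unique morphism $! : c \to 1$ in ${\cal C}$ together with a morphism $\alpha : x \to L(!)(1_{L(1)})$ in $L(c)$; since $L(!)$ preserves finite limits, $L(!)(1_{L(1)})$ is terminal in $L(c)$, so $\alpha$ is also unique. Hence this hom-set is a singleton and $(1, 1_{L(1)})$ is terminal.

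For the pullback, I would start from an arbitrary competing cone, i.e. an object $(e, w)$ equipped with arrows $(h, \gamma) : (e, w) \to (c', x')$ and $(k, \delta) : (e, w) \to (c'', x'')$ satisfying $(f, \alpha) \circ (h, \gamma) = (g, \beta) \circ (k, \delta)$, and produce a unique mediating arrow $(m, \epsilon)$. At the level of base categories the equality forces $f \circ h = g \circ k$, so the universal property of the pullback $c' \times_c c''$ in ${\cal C}$ yields a unique $m : e \to c' \times_c c''$ with $\pi' \circ m = h$ and $\pi'' \circ m = k$. At the level of fibres, I would apply the finite-limit-preserving functor $L(m)$ to the pullback square defining $z$: this exhibits $L(m)(z)$ as the pullback in $L(e)$ of $L(m)(u')$ and $L(m)(u'')$. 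Using the structure isomorphisms $L(m) \circ L(\pi') \cong L(h)$ and $L(m) \circ L(\pi'') \cong L(k)$, the arrows $\gamma$ and $\delta$ form a cone over this pullback, whose compatibility over the common corner $L(h)(L(f)(x)) \cong L(k)(L(g)(x))$ is precisely the fibre component of the hypothesis $(f, \alpha) \circ (h, \gamma) = (g, \beta) \circ (k, \delta)$. The universal property of $L(m)(z)$ then provides a unique $\epsilon : w \to L(m)(z)$ recovering $\gamma$ and $\delta$, and $(m, \epsilon)$ is the required mediating arrow; its uniqueness follows from that of $m$ and of $\epsilon$.

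The main obstacle is not conceptual but bookkeeping: one has to keep track of the pseudofunctorial coherence isomorphisms throughout, checking that the identifications $L(m) \circ L(\pi') \cong L(\pi' \circ m) = L(h)$, the isomorphism $L(\pi')(L(f)(x)) \cong L(\pi'')(L(g)(x))$ built into the definition of $z$, and the associativity constraints of $L$ all fit together so that the statement ``$\gamma$ and $\delta$ form a cone over $L(m)(z)$'' coincides on the nose with the given cone condition. This is routine but delicate; if desired, the bookkeeping can be reduced by first passing to a split replacement of $L$, for which the structure isomorphisms are identities, and then transporting the conclusion back along the resulting equivalence. Once the coherence is dealt with, the verification of both universal properties is entirely formal.
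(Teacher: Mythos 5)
Your proposal is correct: the paper itself leaves this proof to the reader as ``straightforward'' (citing Proposition 2.4.2 of \cite{CaramelloZanfa}), and your direct verification of the two universal properties --- uniqueness of the fibre component for the terminal object via preservation of terminals by $L(!)$, and construction of the mediating arrow by first using the pullback in $\cal C$ and then applying the finite-limit-preserving functor $L(m)$ to the fibrewise pullback defining $z$ --- is exactly the intended argument. Your remark that the remaining work is coherence bookkeeping for the pseudofunctorial isomorphisms (optionally simplified by strictification) is also accurate and completes the picture.
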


\begin{proof}
	Straightforward and left to the reader (cf. also Proposition 2.4.2 \cite{CaramelloZanfa}).
\end{proof}

\begin{prop}\label{propcartesianexistential}
Let $\cal C$ be a cartesian category and $L:{\cal C}^{\textup{op}}\to \CAT$ a $\cal C$-indexed category such that for any $c\in {\cal C}$, $L(c)$ is a cartesian category and for any $f:c\to c'$, $L(f):L(c')\to L(c)$ preserves finite limits. Then 

\begin{enumerate}[(i)]
	\item $L$ satisfies the relative Beck-Chevalley condition if and only if for any arrows $c:V\to Z$ and $d:W\to Z$ in $\cal C$ with common codomain and any $l\in L(W)$, the arrow $$\overline{u'}:\exists_{\pi'}( L(d)(\exists_c(l)) \times_Z l ) \to L(d)(\exists_c(l)) $$ is $J^{L}_{V}$-covering, where the following is a pullback square in ${\cal G}(L)$:
	\[\begin{tikzcd}
		{(V\times_Z W, L(d)(\exists_c(l))\times_Z l)} & {(W, l)} \\
		{(V, L(d)(\exists_c(l)))} & {(Z, \exists_c(l))}
		\arrow["{(c, \exists_c(l))}", from=1-2, to=2-2]
		\arrow["{(d, 1)}", from=2-1, to=2-2]
		\arrow[from=1-1, to=1-2]
		\arrow["{(\pi', u')}", from=1-1, to=2-1]
	\end{tikzcd}\]

	\item $L$ satisfies the relative Frobenius condition if and only if for any arrows $f:E\to E'$ in $\cal E$, any $l\in L(E)$ and any arrow $\alpha:l'\to \exists_{f}(l)$, the arrow
	$\overline{u'}:\exists_f(l'\times_{E'} l) \to l'$ is $J^{L}_{E'}$-covering, where the following is a pullback square in ${\cal G}(L)$:
	
	\[\begin{tikzcd}
		{(E, l'\times_{E'} l)} && {(E, l)} \\
		{(E', l')} && {(E', \exists_f(l))}
		\arrow["{(f, \eta_f(l))}", from=1-3, to=2-3]
		\arrow["{(f, u')}", from=1-1, to=2-1]
		\arrow["{(1_E, u'')}", from=1-1, to=1-3]
		\arrow["{(1_E', \alpha)}", from=2-1, to=2-3]
	\end{tikzcd}\]
\end{enumerate}
\end{prop}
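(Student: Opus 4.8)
The plan is to reduce both equivalences to the sieve-theoretic reformulations of the relative Beck--Chevalley and relative Frobenius conditions already obtained in the proof of Theorem \ref{thmmainexistential}, and then to exploit the fact that, under the cartesianness hypotheses, the sieves appearing there become \emph{principal}. Recall from parts (1) and (2) of that proof that, for an arbitrary existential fibred site, the relative Beck--Chevalley condition for arrows $c:V\to Z$, $d:W\to Z$ and an object $l$ is equivalent to the requirement that the pulled-back sieve $(d,1)^{\ast}(<(c,\eta_{c}(l))>)$ be $J_{L}^{\textup{ext}}$-covering, and the relative Frobenius condition for $f:E\to E'$ and $\alpha:l'\to\exists_{f}(l)$ is equivalent to the requirement that $(1_{E'},\alpha)^{\ast}(<(f,\eta_{f}(l))>)$ be $J_{L}^{\textup{ext}}$-covering. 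I would take these two reformulations, which hold verbatim in our setting, as the starting point.

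First I would invoke the preceding Lemma: since $\cal C$ and each fibre $L(c)$ are cartesian and the transition functors preserve finite limits, the category ${\cal G}(L)$ has pullbacks, computed levelwise as displayed there. Hence the cospans $(c,\eta_{c}(l)),(d,1)$ (respectively $(f,\eta_{f}(l)),(1_{E'},\alpha)$) admit a genuine pullback in ${\cal G}(L)$, which is exactly the object $(V\times_{Z}W,\,L(d)(\exists_{c}(l))\times_{Z}l)$ (respectively $(E,\,l'\times_{E'}l)$) appearing in the statement, together with the displayed projection. The key elementary fact I would then use is that pulling back a \emph{principal} sieve $<\phi>$ along a morphism $g$ in a category with pullbacks again yields a principal sieve, namely the one generated by the projection of the pullback of $\phi$ along $g$: indeed an arrow $k$ lies in $g^{\ast}(<\phi>)$ iff $g\circ k$ factors through $\phi$, which by the universal property of the pullback is equivalent to $k$ factoring through the pullback projection. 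Applying this to $\phi=(c,\eta_{c}(l))$, $g=(d,1)$ (resp. $\phi=(f,\eta_{f}(l))$, $g=(1_{E'},\alpha)$) shows that the sieve furnished by Theorem \ref{thmmainexistential} is the principal sieve generated by the single projection $(\pi',u')$ (resp. $(f,u')$).

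It then remains to unwind the definition of $J_{L}^{\textup{ext}}$ for a one-element family. By that definition, the singleton $\{(\pi',u')\}$ generates a $J_{L}^{\textup{ext}}$-covering sieve precisely when its transpose $\overline{u'}:\exists_{\pi'}(L(d)(\exists_{c}(l))\times_{Z}l)\to L(d)(\exists_{c}(l))$ is $J^{L}$-covering in the appropriate fibre, and symmetrically $\overline{u'}:\exists_{f}(l'\times_{E'}l)\to l'$ in the Frobenius case. Chaining the three equivalences---relative condition $\Leftrightarrow$ sieve $J_{L}^{\textup{ext}}$-covering (Theorem \ref{thmmainexistential}) $\Leftrightarrow$ principal sieve generated by the pullback projection covering (the Lemma together with the principal-sieve fact) $\Leftrightarrow$ transpose of the projection $J^{L}$-covering (definition of $J_{L}^{\textup{ext}}$)---gives exactly the two stated reformulations. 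Conceptually, the single arrow $\overline{u'}$ captures the entire covering family of the general condition because, in the cartesian setting, the indexing category of spans over the cospan (respectively of compatible squares) has the pullback as a terminal object, through which every member of the family factors.

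The hard part will be purely bookkeeping: one must verify that the fibre component of the pullback projection supplied by the Lemma is genuinely the arrow whose transpose is $\overline{u'}$, that is, that the levelwise pullback object $L(d)(\exists_{c}(l))\times_{Z}l$ (resp. $l'\times_{E'}l$) and its projection coincide with the data in the statement. This uses that one leg of the levelwise pullback is an identity---arising from the $1$ in $(d,1)$, respectively from $\eta_{f}(l)$ in $(f,\eta_{f}(l))$---so that the fibre pullback simplifies and its projection becomes precisely the map transposing to $\overline{u'}$. Beyond this identification, no ingredient is needed other than the preceding Lemma, Theorem \ref{thmmainexistential}, and the elementary behaviour of principal sieves under pullback.
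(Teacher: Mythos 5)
Your proposal is correct and takes essentially the same route as the paper: the paper's proof consists of a single sentence stating that the result follows immediately from the preceding Lemma together with the sieve-theoretic characterizations (1) and (2) of the relative Beck--Chevalley and relative Frobenius conditions established in the proof of Theorem \ref{thmmainexistential}. Your expansion---that under the cartesianness hypotheses the pulled-back sieves $(d,1)^{\ast}(<(c,\eta_{c}(l))>)$ and $(1_{E'},\alpha)^{\ast}(<(f,\eta_{f}(l))>)$ become principal, generated by the pullback projections supplied by the Lemma, whose fibre components transpose exactly to $\overline{u'}$---is precisely the bookkeeping the paper leaves implicit.
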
 

\begin{proof}
The proof follows immediately from the lemma in light of the characterization of the relative Beck-Chevalley and the relative Frobenius conditions established in the proof of Theorem \ref{thmmainexistential}.	
\end{proof}

\begin{defn}
	Let $\cal C$ be a category.	
	\begin{enumerate}[(a)]
		\item A family $S$ in $\cal C$ of arrows towards a given object $c$ is said to be \emph{linearizable} if there is a way of associating to $S$ an arrow $i_{S}:\dom(i_{S})\to c$ such that the every arrow in $S$ factors through $i_{S}$. We call the arrow $i_{S}$ a \emph{linearization} of $S$.
		
		\item A Grothendieck topology $J$ on $\cal C$ is said to admit a \emph{reflecting linearization} if every family $S$ in $\cal C$ of arrows with common codomain admits a linearization $i_{S}$ such that $S$ is $J$-covering if and only if $i_{S}$ is an isomorphism.
		
		\item A fibred site $L$ over $\cal C$ is said to admit a \emph{reflecting linearization} if all the Grothendieck topologies on its fibers admit a reflecting linearisation and the transition functors preserve such linearisations up to isomorphism (in the sense that for any $S$ and any $L(f)$, $i_{L(f)(S)}\cong L(f)(i_S)$ (as objects of the relevant slice category)). 
	\end{enumerate}
\end{defn}

\begin{ex}
	Most of the Grothendieck topologies considered in \cite{CaramelloStone}, which are specified by declaring a certain kind of joins which exist in a preorder covering, admit a reflecting linearization. In particular, the canonical topology on a topos and the canonical topology on a frame admit a reflecting linearization. Note that if a Grothendieck topology $J$ satisfies the property that the $J$-closed sieves are principal (i.e., generated by a single arrow) then it admits a reflecting linearisation, since any sieve $S$ is $J$-covering if and only if its $J$-closure is. Indeed, this is the situation for the above-mentioned Grothendieck topologies. 
\end{ex}

\begin{prop}\label{propcontainmentGiraudtopology}
	Let $L:{\cal C}^{\textup{op}}\to \CAT$ be a $J$-reflecting existential fibred site and $J$ a Grothendieck topology on $\cal C$. Then the existential topology $J^{\textup{ext}}_{L}$ contains the Giraud topology induced by $J$.
\end{prop}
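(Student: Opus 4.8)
The plan is to prove the containment by showing that $p_L$ is itself a comorphism of sites $({\cal G}(L), J_L^{\textup{ext}}) \to ({\cal C}, J)$; since the Giraud topology $J_L$ is by definition the \emph{smallest} topology on ${\cal G}(L)$ for which $p_L$ is such a comorphism, this yields $J_L \subseteq J_L^{\textup{ext}}$ at once. Unwinding the defining covering-lifting property of a comorphism, what must be checked is that for every object $(E, l)$ of ${\cal G}(L)$ and every $J$-covering sieve $S$ on $E = p_L(E, l)$, the sieve $p_L^{\ast}(S) = \{(e, \alpha) \mid e \in S\}$ is $J_L^{\textup{ext}}$-covering. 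Since $J_L^{\textup{ext}}$ is a Grothendieck topology (Theorem \ref{thmmainexistential}), hence closed upwards, and since $p_L^{\ast}(S)$ contains, for any generating family $\{e_i : E_i \to E \mid i \in I\}$ of $S$, the family of cartesian lifts $(e_i, 1) : (E_i, L(e_i)(l)) \to (E, l)$, it suffices to show that this family of cartesian lifts is $J_L^{\textup{ext}}$-covering.

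Next I would translate the existential-covering condition for the cartesian lifts into a statement in the fibre $L(E)$. Writing the cartesian lift of $e_i$ as the datum $l_i := L(e_i)(l)$ together with $\alpha_i := 1_{L(e_i)(l)}$, its transpose $\overline{\alpha_i}$ along the adjunction $\exists_{e_i} \dashv L(e_i)$ is exactly the counit $\epsilon_{e_i}(l) : \exists_{e_i}(L(e_i)(l)) \to l$. Thus, by the very definition of $J_L^{\textup{ext}}$, the cartesian lifts form a $J_L^{\textup{ext}}$-covering family if and only if the family of counits $\{\epsilon_{e_i}(l) : \exists_{e_i}(L(e_i)(l)) \to l \mid i \in I\}$ is $J^L_E$-covering in the fibre $L(E)$.

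The remaining point, which I expect to be the crux, is to derive exactly this from the $J$-reflecting hypothesis. I would apply $J$-reflectingness to the $J$-covering family $S = \{e_i \mid i \in I\}$ and to the family $T := \{\epsilon_{e_i}(l) \mid i \in I\}$ of arrows of $L(E)$ with common codomain $l$: it then suffices to verify, for each $j \in I$, that the transported family $L(e_j)(T)$ is $J^L_{E_j}$-covering in $L(E_j)$. But $L(e_j)(T)$ contains the single arrow $L(e_j)(\epsilon_{e_j}(l))$, and the triangle identity for $\exists_{e_j} \dashv L(e_j)$ gives $L(e_j)(\epsilon_{e_j}(l)) \circ \eta_{e_j}(L(e_j)(l)) = 1_{L(e_j)(l)}$, exhibiting it as a split epimorphism. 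A split epimorphism generates the maximal sieve (the identity of its codomain factors through it), so $L(e_j)(T)$ is $J^L_{E_j}$-covering. By $J$-reflectingness $T$ is therefore $J^L_E$-covering, the cartesian lifts form a $J_L^{\textup{ext}}$-covering family, and the containment $J_L \subseteq J_L^{\textup{ext}}$ follows. The only delicate bookkeeping is checking that transposing the identity yields precisely the counit and that the relevant triangle identity is the one producing the section $\eta_{e_j}(L(e_j)(l))$; the rest is formal and uses only the upward closure of a Grothendieck topology.
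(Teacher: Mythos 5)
Your proposal is correct and takes essentially the same route as the paper's proof: both reduce the claim to showing that the family of counits $\{\epsilon_{e_i}(l):\exists_{e_i}(L(e_i)(l))\to l\}$ is covering in the fibre $L(E)$, and both establish this by applying the $J$-reflecting hypothesis and then using the triangle identity to exhibit each $L(e_j)(\epsilon_{e_j}(l))$ as (generating) a maximal, hence covering, sieve. The only difference is that you make explicit the preliminary bookkeeping (Giraud topology as the smallest comorphism topology, cartesian lifts, and the identification of the transpose of the identity with the counit) that the paper's terser proof leaves implicit.
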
	

\begin{proof}
	Let $S$ be a $J$-covering family on an object $c$ of $\cal C$. We want to show that, for any $l\in L(c)$, the family $\epsilon^{f}(l):\exists_f(L(f)(l))\to l$, where $\epsilon^f$ is the counit of the adjunction $(\exists_f \dashv L(f))$ is $J^{L}_{c}$-covering. Since $L$ is $J$-reflecting, it is equivalent to show that for any $f\in S$ the image under $L(f)$ of this family is $J^{L}_{L(\dom(f))}$-covering. But this is the case, since by one of the triangular identities, such a family contains the identity on $L(\dom(f))$.
\end{proof}

\begin{prop}\label{propreflectingfibredsite}
	Let $L:{\cal C}^{\textup{op}}\to \CAT$ be a fibred site. If $L$ is a $J$-prestack and it admits a reflecting linearization then $L$ is $J$-reflecting.
\end{prop}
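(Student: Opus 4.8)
The plan is to combine the reflecting linearization on the fibres with the prestack hypothesis, the point being that the latter guarantees that \emph{being an isomorphism is a local property} for morphisms in the fibres of $L$.

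First I would set up the reduction. Let $S$ be a $J$-covering family on an object $c$ of $\mathcal{C}$, and let $T$ be a family of arrows with common codomain $t$ in the fibre $L(c)$ such that $L(f)(T)$ is $J^{L}_{\dom(f)}$-covering for every $f\in S$; I must show that $T$ is $J^{L}_{c}$-covering. Since $L$ admits a reflecting linearization, the family $T$ has a linearization $i_{T}\colon \dom(i_{T})\to t$ in $L(c)$, and by the defining property of a reflecting linearization, $T$ is $J^{L}_{c}$-covering if and only if $i_{T}$ is an isomorphism. Likewise, for each $f\in S$ the family $L(f)(T)$ has a linearization $i_{L(f)(T)}$, with $L(f)(T)$ being $J^{L}_{\dom(f)}$-covering if and only if $i_{L(f)(T)}$ is an isomorphism. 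By the hypothesis that the transition functors preserve linearisations up to isomorphism we have $i_{L(f)(T)}\cong L(f)(i_{T})$ in the relevant slice, so the assumption on $T$ translates into the statement that $L(f)(i_{T})$ is an isomorphism for every $f\in S$. The claim thus reduces to showing that a morphism of $L(c)$ which restricts to an isomorphism along every member of a $J$-covering family is itself an isomorphism.

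Next I would invoke the $J$-prestack hypothesis, which says exactly that the hom-presheaves of $L$ are $J$-sheaves, so that morphisms between fixed objects of a fibre can be reconstructed and compared locally along $J$-covers. Writing $i:=i_{T}$ and $a:=\dom(i)$, I set $j_{f}:=L(f)(i)^{-1}$ for each $f\in S$. These local inverses form descent data for the hom-presheaf $\Hom_{L(-)}(t|_{-},a|_{-})$ over the sieve generated by $S$: this compatibility is automatic, since every $j_{f}$ arises by inverting the restriction of the \emph{single} global arrow $i$, and inverses are unique and stable under further restriction up to the pseudofunctoriality isomorphisms of $L$. The glueing half of the prestack condition then yields a morphism $j\colon t\to a$ in $L(c)$ with $L(f)(j)=j_{f}$ for all $f\in S$. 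Since $L(f)(j\circ i)=\id$ and $L(f)(i\circ j)=\id$ for every $f\in S$, the separation half of the prestack condition forces $j\circ i=\id_{a}$ and $i\circ j=\id_{t}$; hence $i$ is an isomorphism and $T$ is $J^{L}_{c}$-covering, as required.

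The main obstacle I anticipate is bookkeeping rather than conceptual: one must check that the family $\{L(f)(i)^{-1}\}_{f\in S}$ genuinely constitutes descent data, keeping track of the coherence isomorphisms witnessing pseudofunctoriality of $L$ when comparing restrictions over intersections. Because all these arrows descend from the one morphism $i$, their compatibility is forced by uniqueness of inverses, so no real work is hidden there; the substantive input is simply that both parts of the sheaf (prestack) condition are used—the glueing part to produce the candidate inverse $j$, and the separation part to certify that it is a two-sided inverse.
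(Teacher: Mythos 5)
Your proof is correct and follows essentially the same route as the paper: reduce, via the reflecting linearization and its preservation by transition functors, to the claim that $i_{T}$ is an isomorphism as soon as $L(f)(i_{T})$ is one for every $f$ in the $J$-covering family, and then conclude from the $J$-prestack hypothesis. The only difference is one of detail rather than of method: the paper simply asserts this locality of isomorphisms as an immediate consequence of being a $J$-prestack, whereas you spell out the underlying descent argument (glueing the local inverses of the restrictions of $i_{T}$ into a global arrow, then using separatedness to verify it is a two-sided inverse).
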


\begin{proof}
Given a family $S$ of arrows with common codomain $c$ in $\cal C$, let us consider a linearisation $i_{S}$. Let us suppose that for a $J$-covering family $T$ on $c$, $L(g)(S)$ is $J^{L}_{\dom(g)}$-covering. We want to show that $S$ is $J^{L}_{c}$-covering. Since $L$ admits a reflective linearisation, $S$ is $J^{L}_{c}$-covering if and only if $i_{S}$ is an isomorphism, and  $L(g)(S)$ is $J^{L}_{\dom(g)}$-covering if and only if $i_{L(g)(S)}=L(g)(i_{S})$ is an isomorphism. 

But, since $L$ is a $J$-prestack, $i_{S}$ is an isomorphism if and only if $L(g)(i_{S})$ is for any $g\in T$, whence our thesis follows. 	
\end{proof}	

\begin{defn}
	Let $f:{\cal F}\to {\cal E}$ be a geometric morphism. The \emph{existential fibred site of $f$} is the indexed functor $E_{f}:{\cal E}^{\textup{op}}\to \CAT$ sending any object $E$ of $\cal E$ to the topos ${\cal F}\slash f^{\ast}(E)$ endowed with its canonical topology (for any arrow $k:E'\to E$ in $\cal E$, the pullback functor $E_{f}(k):=(f^{\ast}(k))^{\ast}:{\cal F}\slash f^{\ast}(E)\to {\cal F}\slash f^{\ast}(E')$ has a left adjoint $\exists_{k}:{\cal F}\slash f^{\ast}(E')\to {\cal F}\slash f^{\ast}(E)$ given by composition with $f^{\ast}(k)$).  
	
	If $({\cal C}, J)$ is a site of definition for $\cal E$, the composite of $E_{f}$ with the canonical functor ${\cal C}\to \Sh({\cal C}, J)$ is also called the existential fibred site of $f$. 
\end{defn}

\begin{remark}
	For any geometric morphism $f$, the fibred site $E_{f}$ is open and has a reflective linearization.
\end{remark}

\begin{prop}
	Let $f:{\cal F}\to {\cal E}$ be a geometric morphism. Then, under the identification $(1 \downarrow f^{\ast})\cong {\cal G}(E_{f})$, the topology $J_{f}$ on $(1 \downarrow  f^{\ast})$ corresponds to the existential topology $J^{\textup{ext}}_{E_{f}}$ on ${\cal G}(E_{f})$ associated with the existential fibred site of $f$.  
\end{prop}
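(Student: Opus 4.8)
The plan is to compare the two topologies family-by-family through the explicit identification $(1_{\cal F}\downarrow f^{\ast})\cong {\cal G}(E_{f})$, reducing both covering conditions to joint epimorphicity of the \emph{same} family of arrows of ${\cal F}$. First I would recall what each side means. By definition of the relative topology, a family $\{(g_{i}, e_{i}):(F_{i}, E_{i}, \alpha_{i})\to (F, E, \alpha)\mid i\in I\}$ in $(1_{\cal F}\downarrow f^{\ast})$ is $J_{f}$-covering precisely when its image $\{g_{i}:F_{i}\to F\mid i\in I\}$ under $\pi_{\cal F}$ is $J^{\textup{can}}_{\cal F}$-covering, i.e. jointly epimorphic in ${\cal F}$. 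On the other side, by Theorem \ref{thmmainexistential} applied to $L=E_{f}$, a family $\{(e_{i}, \beta_{i}):(E_{i}, l_{i})\to (E, l)\mid i\in I\}$ in ${\cal G}(E_{f})$ is $J^{\textup{ext}}_{E_{f}}$-covering precisely when the transposed family $\{\overline{\beta_{i}}:\exists_{e_{i}}(l_{i})\to l\mid i\in I\}$ is covering for the canonical topology on the fibre $E_{f}(E)={\cal F}\slash f^{\ast}(E)$, that is, jointly epimorphic in the slice topos ${\cal F}\slash f^{\ast}(E)$.

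Next I would make the identification explicit and track arrows. An object $(F, E, \alpha:F\to f^{\ast}(E))$ corresponds to the pair $(E, l)$ with $l=[F\xrightarrow{\alpha} f^{\ast}(E)]$ in ${\cal F}\slash f^{\ast}(E)$. Given a comma arrow $(g_{i}, e_{i})$, so that $\alpha\circ g_{i}=f^{\ast}(e_{i})\circ\alpha_{i}$, the defining pullback of $E_{f}(e_{i})(l)=(f^{\ast}(e_{i}))^{\ast}(l)$ along $f^{\ast}(e_{i})$ together with this commutativity yields, by its universal property, a unique arrow $\beta_{i}:l_{i}\to (f^{\ast}(e_{i}))^{\ast}(l)$ over $f^{\ast}(E_{i})$ whose composite with the projection $(f^{\ast}(e_{i}))^{\ast}(l)\to l$ equals $g_{i}$; this $\beta_{i}$ is exactly the arrow corresponding to $(g_{i}, e_{i})$ in ${\cal G}(E_{f})$.

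The heart of the argument — and the step I expect to demand the most care — is to identify the underlying ${\cal F}$-morphism of the transpose $\overline{\beta_{i}}$. Since $\exists_{e_{i}}$ is composition with $f^{\ast}(e_{i})$ and is left adjoint to the pullback $(f^{\ast}(e_{i}))^{\ast}$, the transpose $\overline{\beta_{i}}:\exists_{e_{i}}(l_{i})\to l$ is $\varepsilon_{l}\circ \exists_{e_{i}}(\beta_{i})$, where the counit $\varepsilon_{l}$ is precisely the pullback projection $(f^{\ast}(e_{i}))^{\ast}(l)\to l$. As $\exists_{e_{i}}(\beta_{i})$ has underlying arrow $\beta_{i}$, the underlying arrow of $\overline{\beta_{i}}$ in ${\cal F}$ is the composite of $\beta_{i}$ with that projection, namely $g_{i}=\pi_{\cal F}(g_{i}, e_{i})$.

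Finally I would pass from the fibre to ${\cal F}$. The forgetful functor $U:{\cal F}\slash f^{\ast}(E)\to {\cal F}$ is a left adjoint (its right adjoint is $f^{\ast}(E)\times (-)$) and is faithful; being a left adjoint it preserves jointly epimorphic families, and being faithful it reflects them, since if $\{U(h_{i})\}$ is jointly epic then any $u,v$ of the slice with $u\,h_{i}=v\,h_{i}$ satisfy $U(u)=U(v)$, whence $u=v$. Consequently a family of ${\cal F}\slash f^{\ast}(E)$ is jointly epimorphic if and only if its underlying family in ${\cal F}$ is. Combining this with the identification of underlying arrows above, $\{\overline{\beta_{i}}\}$ is jointly epimorphic in ${\cal F}\slash f^{\ast}(E)$ if and only if $\{g_{i}\}$ is jointly epimorphic in ${\cal F}$, which is exactly the $J_{f}$-covering condition. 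Hence $J_{f}$ and $J^{\textup{ext}}_{E_{f}}$ have the same covering families under the identification $(1_{\cal F}\downarrow f^{\ast})\cong {\cal G}(E_{f})$, as required.
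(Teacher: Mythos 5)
Your proof is correct and follows essentially the same route as the paper, whose entire proof is a remark that the statement is ``obvious from the discussion'' in its section on localic morphisms, where the $J_{f}$-covering condition is re-expressed, via the left adjoints $\exists_{e_i}$ to the transition functors, as a covering condition in the fibre. Your explicit verification --- identifying the underlying ${\cal F}$-arrow of the transpose $\overline{\beta_{i}}$ as $g_{i}$, and noting that the slice forgetful functor both preserves and reflects jointly epimorphic families --- is precisely the content the paper leaves implicit.
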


\begin{proof}
	Obvious from the discussion in section \ref{sec:localicmorphisms}.
\end{proof}

The following result shows that, for any existential fibred site $L$, the toposes of sheaves on the fibers of $L$ are related through hyperconnected morphisms to the fibers of the existential topos of $L$:

\begin{prop}\label{propfibers}
	Let $({\cal C}, J)$ be a small-generated site and $L$ an existential fibred site over $({\cal C}, J)$ and $c$ an object of $\cal C$. Let ${\cal G}^{\textup{ext}}_{c}(L)$ be the category whose objects are the pairs $(f, y)$ where $f$ is an arrow in $\cal C$ with codomain $c$ and $y\in L(\dom(f))$, and whose arrows $(f, y) \to (f', y')$ are the pairs $(\xi, \delta)$ consisting of an arrow $\xi$ in $\cal C$ such that $f'\circ \xi=f$ and an arrow $\delta:y\to L(\xi)(y')$ in the category $L(\dom(f))$. Let $\tilde{J}_{c}$ be the Grothendieck topology defined by saying that a family 
	\[
	\{(\xi_i, \delta_i): (f_i, y_i) \to (f, y) \mid  i\in I \}
	\]
	is $\tilde{J}_{c}$-covering if and only if the family 
	\[
	\{ \overline{\delta_i}:(\exists_{f_i}(y_i)) \to y \mid i\in I\}
	\]
	is $J^{L}_{\dom(f)}$-covering.
	
	Then:
	
	\begin{enumerate}[(i)]
		\item Then the canonical functor $p^{L}_{c}:{\cal G}^{\textup{ext}}_{c}(L) \to {\cal G}(L)$ is a comorphism of sites $({\cal G}^{\textup{ext}}_{c}(L), \tilde{J}_{c}) \to ({\cal G}(L), J_{L}^{\textup{ext}})$.
		
		\item The fibre $\Sh({\cal G}(L), J_{L}^{\textup{ext}}) \slash C_{\pi_{L}}^{\ast}(l(c))$ at $c$ of the existential topos $$C_{\pi_{L}}:\Sh({\cal G}(L), J_{L}^{\textup{ext}}) \to \Sh({\cal C}, J)$$ of $L$ is equivalent over  $\Sh({\cal G}(L), J_{L}^{\textup{ext}})$ to the topos $\Sh({\cal G}^{\textup{ext}}_{c}(L), \tilde{J}_{c})$:
		
		\[\begin{tikzcd}
			{\textup{\bf Sh}({\cal G}(L), J_{L}^{\textup{ext}}) / C_{\pi_{L}}^{\ast}(l(c))} && {\textup{\bf Sh}({\cal G}^{\textup{ext}}_{c}(L), \tilde{J}_{c})} \\
			& {\textup{\bf Sh}({\cal G}(L), J_{L}^{\textup{ext}})}
			\arrow[from=1-1, to=2-2]
			\arrow["{C_{p^L_{c}}}"{pos=0.4}, from=1-3, to=2-2]
			\arrow["\sim", from=1-1, to=1-3]
		\end{tikzcd}\]
		
		\item For any arrow $k:c\to c'$ in $\cal C$, the pullback functor
		\[
		\Sh({\cal G}(L), J_{L}^{\textup{ext}}) \slash C_{\pi_{L}}^{\ast}(l(c')) \to \Sh({\cal G}(L), J_{L}^{\textup{ext}}) \slash C_{\pi_{L}}^{\ast}(l(c))
		\]
		along the arrow $(C_{\pi_{L}})^{\ast}(l(k)):C_{\pi_{L}}^{\ast}(l(c)) \to C_{\pi_{L}}^{\ast}(l(c'))$ admits a left adjoint, given by the composition functor $\Sigma_{(C_{\pi_{L}})^{\ast}(l(k))}$ with $(C_{\pi_{L}})^{\ast}(l(k))$, which is induced by the comorphism of sites 
		\[
		({\cal G}^{\textup{ext}}_{c}(L), \tilde{J}_{c}) \to ({\cal G}^{\textup{ext}}_{c'}(L), \tilde{J}_{c'})
		\]
		given by composition with $k$.
		
		\item The morphism of sites
		\[
		i_{c}:(L(c), J^{L}_{c}) \to ({\cal G}^{\textup{ext}}_{c}(L), \tilde{J}_{c})
		\]
		sending an object $x$ of $L(c)$ to the object $(1_{c}, x)$ of ${\cal G}^{\textup{ext}}_{c}(L)$, and the (left adjoint) comorphism of sites
		\[
		\textup{ext}_{c}:({\cal G}^{\textup{ext}}_{c}(L), \tilde{J}_{c}) \to (L(c), J^{L}_{c})
		\]
		sending an object $(f, y)$ of ${\cal G}^{\textup{ext}}_{c}(L)$ to the object $\exists_{f}(y)$ of $L(c)$ induce 	an hyperconnected geometric morphism
		\[
		\Sh(i_{c})\cong C_{\textup{ext}_{c}}:\Sh({\cal G}^{\textup{ext}}_{c}(L), \tilde{J}_{c}) \to \Sh(L(c), J^{L}_{c}).
		\]
		
		\item For any arrow $k:c\to c'$ in $\cal C$, the following diagram of comorphism of sites commutes:
		\[\begin{tikzcd}
			{({\cal G}^{\textup{ext}}_{c}(L), \tilde{J}_{c})} & {(L(c), J^{L}_{c})} \\
			{({\cal G}^{\textup{ext}}_{c'}(L), \tilde{J}_{c'})} & {(L(c'), J^{L}_{c'})}
			\arrow["{E_k}", from=1-1, to=2-1]
			\arrow["{\exists_k}", from=1-2, to=2-2]
			\arrow["{\textup{ext}_c}", from=1-1, to=1-2]
			\arrow["{\textup{ext}_c'}", from=2-1, to=2-2]
		\end{tikzcd}\]
		
	\end{enumerate}
\end{prop}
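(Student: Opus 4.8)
\textit{Overall approach.} The category ${\cal G}^{\textup{ext}}_{c}(L)$ is the Grothendieck construction of $L$ restricted along the domain functor ${\cal C}\slash c \to {\cal C}$; equivalently it is the category of elements $\int_{{\cal G}(L)} P$ of the presheaf $P$ on ${\cal G}(L)$ given by $P(d,y)={\cal C}(d,c)$, with $p^{L}_{c}$ the associated projection sending $(f,y)$ to $(\dom f, y)$. The whole proposition is organized around this identification: (i)--(ii) exhibit $\tilde{J}_{c}$ as the induced topology on this category of elements, (iii) and (v) are functoriality statements, and (iv) is the substantial point. The key structural observation, used throughout, is that the defining covering condition for $\tilde{J}_{c}$ on a family $\{(\xi_i,\delta_i):(f_i,y_i)\to(f,y)\}$ is computed entirely in the single fibre $L(\dom f)$, as the requirement that $\{\overline{\delta_i}:\exists_{\xi_i}(y_i)\to y\}$ be $J^{L}_{\dom f}$-covering; this is \emph{literally} the image under $p^{L}_{c}$ of the condition defining $J^{\textup{ext}}_{L}$ on ${\cal G}(L)$.

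\textit{Parts (i) and (ii).} For (i), since every generator $(\xi,\delta):(\dom g,w)\to(\dom f,y)$ of a $J^{\textup{ext}}_{L}$-covering sieve on $p^{L}_{c}(f,y)=(\dom f,y)$ lifts uniquely to an arrow of ${\cal G}^{\textup{ext}}_{c}(L)$ into $(f,y)$ (put $g:=f\circ\xi$), and since the covering condition transports verbatim across $p^{L}_{c}$, the lifted family is $\tilde{J}_{c}$-covering; this is exactly the covering-lifting property of a comorphism of sites. For (ii), I would use the formula $C_{G}^{\ast}\cong \sheafify\circ\,(-\circ G^{\textup{op}})$ for the inverse image of a comorphism to identify $C_{\pi_{L}}^{\ast}(l(c))\cong \sheafify_{J^{\textup{ext}}_{L}}(P)$, observe (using (i)) that $\tilde{J}_{c}$ is precisely the topology induced by $J^{\textup{ext}}_{L}$ on $\int_{{\cal G}(L)}P$, and then invoke the standard slice-site equivalence $\Sh({\cal D},K)\slash \sheafify_{K}(P)\simeq \Sh(\int P, K_{P})$, under which the structure morphism is the projection comorphism $C_{p^{L}_{c}}$. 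This yields the equivalence over $\Sh({\cal G}(L),J^{\textup{ext}}_{L})$.

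\textit{Parts (iii) and (v).} The functor $E_{k}$ sending $(f,y)$ to $(k\circ f, y)$ is a morphism of fibrations over ${\cal C}$, and its covering-lifting property is immediate because the $\tilde{J}_{c}$- and $\tilde{J}_{c'}$-covering conditions are both computed in the \emph{same} fibre $L(\dom f)$ and are unaffected by postcomposition with $k$; hence $E_{k}$ is a comorphism of sites. Under (ii) it corresponds to the morphism $C_{\pi_{L}}^{\ast}(l(k))$ of the category of elements and induces the dependent-sum functor $\Sigma_{C_{\pi_{L}}^{\ast}(l(k))}$ left adjoint to pullback, giving (iii). For (v), both composites send $(f,y)$ to $\exists_{k\circ f}(y)$, respectively to $\exists_{k}(\exists_{f}(y))$, and these agree by pseudofunctoriality of the assignment $f\mapsto\exists_{f}$ (left adjoints compose), so the square commutes up to canonical isomorphism.

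\textit{Part (iv), the heart.} An arrow $(f,y)\to(1_{c},x)$ in ${\cal G}^{\textup{ext}}_{c}(L)$ is a pair $(f,\delta)$ with $\delta\colon y\to L(f)(x)$, whose transpose $\overline{\delta}\colon\exists_{f}(y)\to x$ witnesses the adjunction $\textup{ext}_{c}\dashv i_{c}$, with $\textup{ext}_{c}\circ i_{c}=\Id$. Since $\textup{ext}_{c}$ is a comorphism of sites (covering-lifting reads off the definition of $\tilde{J}_{c}$), its right adjoint $i_{c}$ is a morphism of sites by \cite[Proposition 3.14(iii)]{denseness}, and the identity $\Lan_{i_{c}^{\textup{op}}}\cong(-)\circ\textup{ext}_{c}^{\textup{op}}$ gives $\Sh(i_{c})\cong C_{\textup{ext}_{c}}$. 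To prove hyperconnectedness I would apply the characterization \cite[A4.6.6]{elephant}: the inverse image $C_{\textup{ext}_{c}}^{\ast}=\sheafify(F\circ\textup{ext}_{c})$ has value $F(\exists_{1_{c}}x)=F(x)$ at $(1_{c},x)$, so the unit of $C_{\textup{ext}_{c}}^{\ast}\dashv C_{\textup{ext}_{c}\ast}$ is an isomorphism and $C_{\textup{ext}_{c}}^{\ast}$ is fully faithful. The remaining, and main, obstacle is closure of the image under subobjects. Here the decisive fact is that the canonical arrow $(f,\eta_{f}(y))\colon(f,y)\to(1_{c},\exists_{f}(y))$ is a \emph{singleton} $\tilde{J}_{c}$-cover, because its transpose is $\Id_{\exists_{f}(y)}$, while $F\circ\textup{ext}_{c}$ sends it to $F(\Id)=\Id$; thus $C_{\textup{ext}_{c}}^{\ast}(F)(1_{c},\exists_{f}(y))\to C_{\textup{ext}_{c}}^{\ast}(F)(f,y)$ is an isomorphism. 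For any subsheaf $S\hookrightarrow C_{\textup{ext}_{c}}^{\ast}(F)$ the singleton-cover together with the subsheaf (closed-under-$\tilde{J}_{c}$) condition then forces $S(f,y)\cong S(1_{c},\exists_{f}(y))$, whence $S\cong C_{\textup{ext}_{c}}^{\ast}(C_{\textup{ext}_{c}\ast}S)$, giving closure under subobjects and hence hyperconnectedness. I expect the genuine technical work to lie in this last step, namely in reconciling sheafification on ${\cal G}^{\textup{ext}}_{c}(L)$ with the identity-fibre values; everything else is formal once the singleton-cover observation is in place.
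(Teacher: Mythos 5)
Parts (i), (ii), (iii) and (v) of your proposal are correct and essentially coincide with the paper's own argument: the paper likewise opens by identifying ${\cal G}^{\textup{ext}}_{c}(L)$ with the category of elements of $\Hom_{\cal C}(\pi_{L}(-),c)$ and $\tilde{J}_{c}$ with the topology induced along the projection, and then invokes the general comorphism fact for categories of elements, the slice-site equivalence, and the (pseudo)functoriality of $f\mapsto\exists_{f}$. The divergence, and the problems, are in (iv). A first inaccuracy: your parenthetical claim that the cover-lifting property of $\textup{ext}_{c}$ ``reads off the definition of $\tilde{J}_{c}$'' is not correct. A $J^{L}_{c}$-cover of $\textup{ext}_{c}(f,y)=\exists_{f}(y)$ lives in the fibre $L(c)$, whereas the condition for a family over $(f,y)$ to be $\tilde{J}_{c}$-covering is computed in the \emph{different} fibre $L(\dom(f))$; transporting the one into the other is precisely the content of the Relative Frobenius condition, which is exactly what the paper invokes at this point. (Equivalently: lift the given cover $S$ to the $\tilde{J}_{c}$-cover $i_{c}(S)$ of $(1_{c},\exists_{f}(y))$ and pull it back along your singleton cover $(f,\eta_{f}(y))$; but pullback-stability of $\tilde{J}_{c}$ is itself not definitional --- it rests on Theorem \ref{thmmainexistential}, whose proof runs through the relative Frobenius and Beck--Chevalley conditions.)

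The genuine gap is in the hyperconnectedness argument itself. You run the topos-level criterion (fully faithful inverse image plus image closed under subobjects), and both halves rest on the claim that $C_{\textup{ext}_{c}}^{\ast}(F)$ has value $F(x)$ at $(1_{c},x)$. But $C_{\textup{ext}_{c}}^{\ast}(F)=a_{\tilde{J}_{c}}(F\circ\textup{ext}_{c}^{\textup{op}})$, and sheafification can change the value at $(1_{c},x)$: matching families for a $\tilde{J}_{c}$-cover of $(1_{c},x)$ take values in the $F(\exists_{f_{i}}(y_{i}))$, and two such objects can be related by arrows of $L(c)$ that do not come from ${\cal G}^{\textup{ext}}_{c}(L)$, so identifying these data with $F(x)$ via the sheaf property of $F$ is exactly what would have to be proved. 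The same unproven identification is what is supposed to ``force'' $S(f,y)\cong S(1_{c},\exists_{f}(y))$ in your subobject step: for an arbitrary sheaf, restriction along the singleton cover is only injective, and surjectivity for sheaves of the form $C_{\textup{ext}_{c}}^{\ast}(F)$ and their subsheaves is precisely the missing content. You flag this yourself as ``the genuine technical work'', and it is: as written, (iv) is a strategy, not a proof. The paper closes (iv) much more cheaply by staying at the level of sites: it applies the characterization (Proposition 6.25 of \cite{denseness}) that the morphism induced by the morphism of sites $i_{c}$ is hyperconnected if and only if $i_{c}$ is cover-reflecting and closed-sieve-lifting. Cover-reflection is immediate from the definition of $\tilde{J}_{c}$, and closed-sieve-lifting is a two-line argument using your singleton cover together with \emph{closedness}: if $S$ is a $\tilde{J}_{c}$-closed sieve on $(1_{c},x)$ and $(f,\delta)\in S$, then $(1_{c},\overline{\delta})\circ(f,\eta_{f}(y))=(f,\delta)$ with $(f,\eta_{f}(y))$ covering, so $(1_{c},\overline{\delta})\in S$, whence $S$ is the $\tilde{J}_{c}$-closure of $i_{c}$ applied to the family $\{\overline{\delta}\mid(f,\delta)\in S\}$ in $L(c)$. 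If you wish to keep your route, you must actually prove that restriction along $l(f,\eta_{f}(y))$ is bijective on sections of sheaves in the image of $C_{\textup{ext}_{c}}^{\ast}$; switching to the site-level criterion avoids computing any sheafification at all.
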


\begin{proof}
	As a first remark, we observe that the category ${\cal G}^{\textup{ext}}_{c}(L)$ defined in the statement of the Proposition is precisely the category of elements of the functor $\Hom_{\cal C}(\pi_{L}(-), c):{\cal G}(L)^{\textup{op}}\to \Set$, and $\tilde{J}_{c}$ is the Grothendieck topology on it induced by the existential topology $J_{L}^{\textup{ext}}$ via the canonical projection functor ${\cal G}^{\textup{ext}}_{c}(L) \to {\cal G}(L)$.
	
	(i) This is a particular case of the fact that for any presheaf $Q$ on a category ${\cal D}$, endowed with a Grothendieck topology $K$, the category $\int Q$, endowed with the Grothendieck topology $K_{Q}$ induced by the canonical projection $\pi_{Q}:{\int Q} \to {\cal D}$, is a comorphism of sites $({\int Q}, K_{Q})\to ({\cal D}, K)$.
	
	(ii) This follows from the equivalence discussed in section 6.7 of \cite{denseness}.
	
	(iii) This follows from Theorem 3.3 of \cite{dependent}.
	
	(iv) First, we notice that $\textup{ext}_{c}$ is a comorphism of sites $({\cal G}^{\textup{ext}}_{c}(L), \tilde{J}_{c}) \to (L(c), J^{L}_{c})$. Indeed, this follows immediately from the Relative Frobenius condition. Next, we observe that, as $\textup{ext}_{c}$ is left adjoint to $i_{c}$ (as a consequence of the fact that, for any arrow $f$ (with codomain $c$), $\exists_{f}$ is left adjoint to $L(f)$), $i_{c}$ is a morphism of sites $(L(c), J^{L}_{}c)) \to ({\cal G}^{\textup{ext}}_{c}(L), \tilde{J}_{c})$ inducing the same geometric morphism (cf. Proposition 3.14 \cite{denseness}). 

	We can thus apply Proposition 6.25 \cite{denseness}, providing necessary and sufficient conditions for the geometric morphism induced by a morphism of sites to be hyperconnected: the geometric morphism $\Sh(F):\Sh({\cal D}, K) \to \Sh({\cal C}, J)$ induced by a morphism of small-generated sites $F:({\cal C}, J)\to ({\cal D}, K)$ is hyperconnected if and only if $F$ is cover-reflecting and \emph{closed-sieve-lifting}, in the sense that for every object $c$ of $\cal C$ and any $K$-closed sieve $S$ on $F(c)$ there exists a ($J$-closed) sieve $R$ on $c$ such that $S$ coincides with the $K$-closure of the sieve on $F(c)$ generated by the arrows $F(f)$ for $f\in R$.  
	
	The fact that $i_{c}:(L(c), J^{L}_{c})) \to ({\cal G}^{\textup{ext}}_{c}(L), \tilde{J}_{c})$ is cover-reflecting is clear by definition of the topology $\tilde{J}_{c}$. It remains to show that it is closed-sieve-lifting. For this, let us suppose that $S=\{(f, \delta): (f, y) \to (1_{c}, x) \mid  (f, \delta) \in S\}$ is a $\tilde{J}_{c}$-closed sieve on an object of the form $i_{c}(c)(x)=(1_{c}, x)$, for any $(f, \delta) \in S$, the arrow $(1_{c}, \overline{\delta}):(1_{c}, \exists_{f}(y)) \to (1_{c}, x)$ belongs to the $\tilde{J}_{c}$-closure of $S$, since the arrow $(f, \eta_{f}(y)):(f, y)\to (1_{c}, \exists_{f}(y))$ is $\tilde{J}_{c}$-covering and $(1_{c}, \overline{\delta})\circ (f, \eta_{f}(y))=(f, \delta)\in S$. So, since $S$ is $\tilde{J}_{c}$-closed by our hypothesis, for any $(f, \delta) \in S$, $(1_{c}, \overline{\delta}) \in S$, whence $S$ coincides with the $\tilde{J}_{c}$-closure of the image under the functor $i_{c}$ of the $J^L_{c}$-covering family $\{\overline{\delta}: \exists_{f}(y) \to x \mid (f, \delta)\in S \}$.
	
	(v) The commutativity of the diagram readily follows from the functoriality in $g$ of the construction $g\mapsto \exists_{g}$. 
\end{proof}

\begin{remark}\label{remretract}
	Under the hypotheses of Proposition \ref{propfibers}, for any $c\in {\cal C}$, $\textup{ext}_{c}\circ i_{c}\cong 1_{L(c)}$, whence the category $L(c)$ is a retract of the category ${\cal G}^{\textup{ext}}_{c}(L)$.  
\end{remark}

\subsection{Internal locales as existential fibred sites}

The following theorem, characterizing internal locales in an arbitrary presheaf topos, is an immediate corollary of Proposition \ref{propcartesianexistential}.

\begin{thm}\label{thm:existentialcartesian}
	Let $\cal C$ be a cartesian category and $L:{\cal C}^{\textup{op}}\to \CAT$ a $\cal C$-indexed category such that for any $c\in {\cal C}$, $L(c)$ is a frame, endowed with the canonical topology, and for any $f:c\to c'$, $L(f):L(c')\to L(c)$ is a frame homomorphism having a left adjoint $\exists_f$. Then $L$ is an existential fibred site if and only if it is an internal locale in $[{\cal C}^{\textup{op}}, \Set]$, that is, if and only if the following conditions are satisfied:
	
	\begin{enumerate}[(i)]
		\item \emph{Beck-Chevalley condition}: for any pullback square
		\[\begin{tikzcd}
			U & V \\
			W & Z
			\arrow["a", from=1-1, to=1-2]
			\arrow["c", from=1-2, to=2-2]
			\arrow["b"', from=1-1, to=2-1]
			\arrow["d", from=2-1, to=2-2]
			\arrow["\lrcorner"{anchor=center, pos=0.125}, draw=none, from=1-1, to=2-2]
		\end{tikzcd}\]
		
		in $\cal E$, the following square commutes:
		
		\[\begin{tikzcd}
			{L(V)} & {L(U)} \\
			{L(Z)} & {L(W)}
			\arrow["{L(a)}", from=1-1, to=1-2]
			\arrow["{\exists_b}", from=1-2, to=2-2]
			\arrow["{\exists_c}"', from=1-1, to=2-1]
			\arrow["{L(d)}", from=2-1, to=2-2]
		\end{tikzcd}\]
		
		\item \emph{Frobenius reciprocity condition}: for any $a:E\to E'$, $l\in L(E)$ and $l'\in L(E')$,
		\[
		\exists_a(L(a)(l') \wedge l)= \exists_a(l) \wedge l'. 
		\] 	
	\end{enumerate}
\end{thm}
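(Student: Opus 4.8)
The plan is to read off both equivalences directly from Proposition \ref{propcartesianexistential}, specialising its two covering conditions to the frame setting, where a singleton covering family collapses to an equality of elements.

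First I would check that the hypotheses of Proposition \ref{propcartesianexistential} are met: a frame, regarded as a preorder category, is cartesian (finite meets are the finite limits and the top element is terminal), and a frame homomorphism preserves finite meets, hence finite limits. Thus Proposition \ref{propcartesianexistential}(i) and (ii) apply verbatim. I would also recall the shape of the canonical topology $J^{L}_{c}$ on the frame $L(c)$: a family $\{l_i \to l\}$ is covering iff $\bigvee_i l_i = l$, so that a \emph{singleton} arrow $y \to x$ (necessarily witnessing $y \leq x$) is $J^{L}_{c}$-covering precisely when $y = x$. Finally, since $\cal C$ carries the trivial topology, every presheaf is a sheaf, so by the characterisation underlying Definition \ref{definternallocale} (Lemma C1.6.9 of \cite{elephant}) an internal locale in $[{\cal C}^{\textup{op}}, \Set]$ is exactly a functor as in the hypotheses satisfying the Beck--Chevalley and Frobenius conditions; hence it suffices to prove that these are equivalent to the relative Beck--Chevalley and relative Frobenius conditions.

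For Beck--Chevalley, I would use the lemma preceding Proposition \ref{propcartesianexistential} to compute the second component $L(d)(\exists_c(l)) \times_Z l$ of the pullback object appearing in part (i). By pseudofunctoriality one has $L(\pi')\circ L(d) = L(\pi'')\circ L(c)$, and since the unit gives $l \leq L(c)(\exists_c(l))$ and $L(\pi'')$ is monotone, this fibre-meet simplifies to $L(\pi'')(l)$. Hence the arrow in question is $\overline{u'}:\exists_{\pi'}(L(\pi'')(l)) \to L(d)(\exists_c(l))$; its codomain always dominates its domain (the lax Beck--Chevalley inequality, obtained by transposing $l \leq L(c)(\exists_c(l))$ along $\exists_{\pi'}\dashv L(\pi')$), so by the singleton criterion above it is covering iff $\exists_{\pi'}(L(\pi'')(l)) = L(d)(\exists_c(l))$, which is exactly the ordinary Beck--Chevalley equation for the pullback square. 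As every pullback square in $\cal C$ arises in this way, the relative Beck--Chevalley condition is equivalent to condition (i).

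For Frobenius, the same lemma identifies the second component $l' \times_{E'} l$ of the pullback object in part (ii) with $l \wedge L(f)(l')$ in $L(E)$, so the relevant arrow is $\overline{u'}:\exists_f(l \wedge L(f)(l')) \to l'$, which is covering iff $\exists_f(L(f)(l')\wedge l) = l'$. If Frobenius holds then $\exists_f(L(f)(l')\wedge l) = \exists_f(l)\wedge l'$, and the hypothesis $\alpha:l'\to \exists_f(l)$, i.e. $l' \leq \exists_f(l)$, makes this equal to $l'$; this yields the relative Frobenius condition. For the converse I would apply the relative condition to the test subobject $\alpha$ corresponding to $\exists_f(l)\wedge l' \leq \exists_f(l)$, that is, replace $l'$ by $\exists_f(l)\wedge l'$: using meet-preservation of $L(f)$ and the unit inequality $l \leq L(f)(\exists_f(l))$ one checks $L(f)(\exists_f(l)\wedge l')\wedge l = L(f)(l')\wedge l$, so the covering condition delivers $\exists_f(L(f)(l')\wedge l) = \exists_f(l)\wedge l'$, which is precisely Frobenius reciprocity. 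The only genuine bookkeeping is these two fibre-meet simplifications together with the choice of test subobject in the Frobenius converse; everything else is the triviality that, in a frame, a one-element covering family forces an equality.
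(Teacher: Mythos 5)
Your proof is correct and takes essentially the same approach as the paper's: both reduce everything to Proposition \ref{propcartesianexistential} together with the observation that, for a frame with its canonical topology, a singleton covering family $y\to x$ forces $y=x$, and both use the same fibre-meet simplifications (e.g. $L(d)(\exists_c(l))\times_Z l = L(a)(l)$) and the same test subobject $\exists_f(l)\wedge l' \leq \exists_f(l)$ for the Frobenius converse. The only cosmetic difference is that you invoke Proposition \ref{propcartesianexistential} uniformly as an equivalence in all four implications, whereas the paper argues the two forward implications (Beck--Chevalley and Frobenius imply their relative versions) by direct construction.
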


\begin{proof}
	The Beck-Chevalley condition and the Frobenius condition for an internal locale $L$ respectively imply the relative Beck-Chevalley condition and the relative Frobenius condition. Indeed, the first assertion is obvious, while the second can be proved as follows. If $l'\leq \exists_f(l)$ then $l'=\exists_f(L(f)(l')\wedge l)$; indeed, by the Frobenius reciprocity condition, $\exists_f(L(f)(l')\wedge l)=l'\circ \exists_{f}(l)=l'$. So, by taking $m=l\wedge L(f)(l')$, $\rho$ to be the arrow $m=l\wedge L(f)(l') \leq l$ and $\delta$ to be the arrow $m=L(f)(l') \leq l \leq L(f)(l')$, the condition is trivially satisfied (independently from the Grothendieck topology $J^{L}_{E'}$ on the fiber $L(E')$, as the arrow $\overline{\delta}$ is the identify on $l'$).  
	
	Conversely, let us suppose that $L$ satisfies the relative Beck-Chevalley condition, and show that $L$ satisfies the (absolute) Beck-Chevalley condition. From Proposition \ref{propcartesianexistential} we know that, for any $l\in L(W)$, we have $L(d)(\exists_c(l))=\exists_b(L(d)(\exists_c(l))\times_{Z} l)$. But $L(d)(\exists_c(l))\times_{Z} l=L(a)(l)\wedge L(b)(L(d)(\exists_c(l)))=L(a)(l)\wedge L(a)(L(c)(\exists_{c}(l)))=L(a)(l)$ (since $l\leq L(c)(\exists_{c}(l))$).
	
	Lastly, let us suppose that that $L$ satisfies the relative Frobenius condition, and show that $L$ satisfies the Frobenius reciprocity condition. But this follows immediately by applying Proposition \ref{propcartesianexistential} in the particular case of $f$ being $a$ and $\alpha$ being the arrow $\exists_a(l)\wedge l' \to \exists_a(l)$.
\end{proof}

The following proposition gives a characterization of internal locales in a topos $\Sh({\cal C}, J)$ of sheaves on an arbitrary, not necessarily cartesian, small-generated site $({\cal C}, J)$:

\begin{prop}\label{propsitecharinternallocales}
	Let $({\cal C}, J)$ be a small-generated site. Then an internal locale in $\Sh({\cal C}, J)$ is a functor $L:{\cal C}^{\textup{op}}\to \CAT$ taking values in the subcategory of frames and frame homomorphisms which is a $J$-sheaf and, when considered as a fibred site (by endowing each frame with its canonical topology), is existential i.e. the following conditions are satisfied:
	
	\begin{enumerate}[(i)]
		\item \emph{Relative Beck-Chevalley condition:}
		For any arrows $c:V\to Z$ and $d:W\to Z$ in $\cal C$ with common codomain and any $l\in L(V)$,
		\[
		L(d)(\exists_c(l))=\mathbin{\mathop{\textup{\huge
					$\vee$}}\limits_{(a, b)\in B_{(c, d)}}}(\exists_{b}(L(a)(l))),
		\] 
		where $B_{(c, d)}$ is the collection of spans $(a:U\to V, b:U\to W)$ such that $c\circ a=d\circ c$: 
		\[\begin{tikzcd}
			U & V \\
			W & Z
			\arrow["c", from=1-2, to=2-2]
			\arrow["d", from=2-1, to=2-2]
			\arrow["a", from=1-1, to=1-2]
			\arrow["b"', from=1-1, to=2-1]
		\end{tikzcd}\]

		\item \emph{Frobenius reciprocity condition}: for any $a:E\to E'$, $l\in L(E)$ and $l'\in L(E')$,
		\[
		\exists_a(L(a)(l') \wedge l)= \exists_a(l) \wedge l'. 
		\] 
	\end{enumerate}	  
\end{prop}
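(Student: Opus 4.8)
The plan is to unwind Definition \ref{definternallocale} for the topos ${\cal E}=\Sh({\cal C},J)$ and transport its data and axioms from the canonical site of ${\cal E}$ to the given site $({\cal C},J)$ along the canonical functor $l_J\colon {\cal C}\to \Sh({\cal C},J)$. First I would observe that, since each fibre $L(c)$ is a frame carrying its canonical topology, a family of subobjects of an object is covering precisely when its join is that object; hence the relative Beck--Chevalley and relative Frobenius conditions of the definition of an existential fibred site specialize to the two join-equations (i) and (ii), so that ``$L$ is existential'' is synonymous with (i)--(ii), and the substance of the statement is the identification of such $L$ with internal locales in $\Sh({\cal C},J)$. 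To set this up, note that $l_J$ exhibits $({\cal C},J)$ as a dense sub-site of $({\cal E},J^{\textup{can}}_{\cal E})$, so by the Comparison Lemma $\Sh({\cal E},J^{\textup{can}}_{\cal E})\simeq \Sh({\cal C},J)={\cal E}$ and $J^{\textup{can}}_{\cal E}$-sheaves on ${\cal E}$ are identified with $J$-sheaves on ${\cal C}$. Restriction along $l_J$ therefore turns a functor ${\cal E}^{\textup{op}}\to \CAT$ as in Definition \ref{definternallocale} into a functor $L\colon {\cal C}^{\textup{op}}\to \CAT$ valued in frames and frame homomorphisms whose underlying set-valued functor is a $J$-sheaf, carrying the fibrewise left adjoints $\exists$ along: condition (ii) of Definition \ref{definternallocale} becomes exactly the requirement that $L$ be a $J$-sheaf, and condition (i) of that definition becomes the frame-plus-left-adjoint structure. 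It remains to match the two remaining axioms, Beck--Chevalley and Frobenius, with (i) and (ii).

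The Frobenius axiom is the easy match: Definition \ref{definternallocale}(iv) and condition (ii) are the identical equation $\exists_a(L(a)(l')\wedge l)=\exists_a(l)\wedge l'$, quantified over arrows of ${\cal E}$ in the former and of ${\cal C}$ in the latter; restricting along $l_J$ gives (ii) at once, and since this reciprocity law is a single-arrow identity compatible with the sheaf gluing it propagates back to all arrows of ${\cal E}$. The forward half of the Beck--Chevalley translation is the computational heart. The geometric fact I would establish is that the pullback $P=l_J(V)\times_{l_J(Z)}l_J(W)$ formed in ${\cal E}$ is covered by the family $\{j_{a,b}\colon l_J(U)\to P\mid (a,b)\in B_{(c,d)}\}$ indexed by the spans of $B_{(c,d)}$, this being the sheafification of the presheaf pullback of representables, whose sections are exactly the commuting spans into $V$ and $W$ over $Z$. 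Applying the absolute condition to this ${\cal E}$-pullback gives $L(d)(\exists_c(l))=\exists_{\pi_W}(L(\pi_V)(l))$. Since the $j_{a,b}$ form an epimorphic family, the frame and Frobenius axioms yield $L(\pi_V)(l)=\bigvee_{(a,b)}\exists_{j_{a,b}}(L(a)(l))$, and applying $\exists_{\pi_W}$ (which preserves joins, with $\exists_{\pi_W}\exists_{j_{a,b}}=\exists_{l_J(b)}$ and $\pi_V j_{a,b}=l_J(a)$, $\pi_W j_{a,b}=l_J(b)$) produces precisely the join-formula (i).

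The reverse implication is the delicate point, and where the non-cartesianness of ${\cal C}$ is really felt: one must deduce the absolute condition for an \emph{arbitrary} pullback square in ${\cal E}$ from (i), which constrains only cospans coming from the site. I would handle this by reducing a general square to pullbacks of site-cospans, using that the $l_J(c)$ generate ${\cal E}$ and that Beck--Chevalley, being expressed through the left adjoints and finite meets, is stable under passage to covers; this reduction, where the sheaf condition and the explicit description of the covers of ${\cal E}$ carry the weight, is where I expect the technical effort to concentrate, and it is the main obstacle. Assembling the sheaf/frame correspondence of the first paragraph, the transfer of Frobenius, and both directions of the Beck--Chevalley translation yields the claimed characterization, with the cartesian case already recorded in Theorem \ref{thm:existentialcartesian} serving as both the template for the computation in (i) and a consistency check.
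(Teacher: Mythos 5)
Your forward direction (from an internal locale in the sense of Definition \ref{definternallocale} to the join-equations (i)--(ii)) is plausible and differs from the paper's, which instead invokes the correspondence with localic morphisms from Proposition \ref{propequivlocalicmorphisms}; your covering-by-spans computation for the pullback of representables is essentially sound. The genuine gap is the converse, which is the substantive content of the proposition, and which you explicitly leave open as ``the main obstacle''. The sketched reduction of ``a general square to pullbacks of site-cospans'' cannot even be set up as stated: before one can verify the absolute Beck--Chevalley condition of Definition \ref{definternallocale}, one must first extend $L$ from ${\cal C}$ to a frame-valued functor on all of ${\cal E}=\Sh({\cal C},J)$ (the sheaf condition only identifies the underlying $\Set$-valued functor with a sheaf on ${\cal E}$; it does not by itself produce frame structures on fibres over arbitrary sheaves, nor transition homomorphisms along arbitrary arrows of ${\cal E}$), then construct left adjoints $\exists$ for arbitrary arrows of ${\cal E}$, and only then check the axioms. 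None of this is addressed, and it is exactly where the difficulty is concentrated. The same issue silently undermines your claim that Frobenius ``propagates back to all arrows of ${\cal E}$'': arrows of ${\cal E}$ are not arrows of ${\cal C}$, so until the extension and its adjoints exist there is no statement to propagate.

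For comparison, the paper sidesteps any such head-on verification. From (i)--(ii) (together with $J$-separatedness, via Propositions \ref{propcontainmentGiraudtopology} and \ref{propreflectingfibredsite}) it deduces that the existential topology $J^{\textup{ext}}_{L}$ contains the Giraud topology, so that $L$ is an existential fibred site over $({\cal C},J)$; then Proposition \ref{propfibers} yields, for each $c$, a hyperconnected morphism $\Sh({\cal G}(L), J^{\textup{ext}}_{L})\slash C_{\pi_{L}}^{\ast}(l(c)) \to \Sh(L(c), J^{\textup{can}}_{L(c)})$, and since the codomain is localic it is the localic reflection of the slice, identifying $L(c)$ with $\Sub(C_{\pi_{L}}^{\ast}(l(c)))$. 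Hence $L\cong L_{C_{\pi_{L}}}$, which is automatically an internal locale because it arises from a geometric morphism. If you wish to salvage your route, you would either have to carry out the Kan extension of $L$ to ${\cal E}$ with all its adjoints and verify the axioms by descent along covers (a long and delicate argument), or adopt the paper's indirect identification $L\cong L_{C_{\pi_{L}}}$, which is what actually closes the converse.
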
	

\begin{proof}
	Let us first show that the given conditions are necessary. By the equivalence of Proposition \ref{propequivlocalicmorphisms}, internal locales in $\Sh({\cal C}, J)$ correspond to localic morphisms $f:{\cal F}\to \Sh({\cal C, J})={\cal E}$, and for any such morphism, the corresponding internal locale $L_{f}$ is given by the composite of the presheaf ${\cal E}^{\textup{op}} \to \Set$ in $\cal E$ (which sends $E$ to $\Sub_{\cal F}(f^{\ast}(E))$), which is a sheaf on $\cal E$ with respect to the canonical topology, with the canonical functor ${\cal C}\to {\cal E}$. The fact that $L_{f}$ is existential, when considered as a fibred site, was observed in the proof of Proposition \ref{propequivlocalicmorphisms}.
	
	Conversely, let us suppose that $L:{\cal C}^{\textup{op}}\to \CAT$ is a fibred site satisfying the conditions of the proposition. The fact that $L$ is $J$-separated implies, by Propositions \ref{propcontainmentGiraudtopology} and \ref{propreflectingfibredsite}, that the existential topology $J^{\textup{ext}}_{L}$ contains the Giraud topology on ${\cal G}(L)$, i.e., that $L$ is an existential fibred site over $({\cal C}, J)$. Thus Proposition \ref{propfibers} ensures that, for any $c\in {\cal C}$, the canonical geometric morphism 
	\[
	\Sh({\cal G}(L), J^{\textup{ext}}_{L})\slash (C_{\pi_{L}})^{\ast}(l(c)) \simeq \Sh({\cal G}^{\textup{ext}}_{c}(L), \tilde{J}_{c}) \to \Sh(L(c), J^{\textup{can}}_{L(c)})
	\]
	is hyperconnected. On the other hand, the topos $\Sh(L(c), J^{\textup{can}}_{L(c)})$ is localic (over $\Set$), whence it can be identified with the localic reflection of the topos $\Sh({\cal G}(L), J^{\textup{ext}}_{L})\slash (C_{\pi_{L}})^{\ast}(l(c))$, in other words, the locale $L(c)$ can be identified with the locale of subterminals of  $\Sh({\cal G}(L), J^{\textup{ext}}_{L})\slash (C_{\pi_{L}})^{\ast}(l(c))$, that is, with the locale of subobjects of $C_{\pi_{L}}^{\ast}(l(c)$ in the existential topos $\Sh({\cal G}(L), J^{\textup{ext}}_{L})$  (for any $c$).
	
	It thus follows that $L$ can be identified with the internal locale $L_{C_{\pi_{L}}}$; in particular, it is an internal locale. 
\end{proof}

\begin{remarks}
	
	\begin{enumerate}[(a)]
		\item If $({\cal C}, J)$ is the canonical site $({\cal E}, J^{\textup{can}}_{\cal E})$ of a Grothendieck topos $\cal E$, $L$ satisfies the conditions of the Proposition if and only if it satisfies the conditions of Definition \ref{definternallocale} (cf. Corollary \ref{thm:existentialcartesian}).
		
		\item If $L:{\cal E}^{\textup{op}}\to \CAT$ is an existential preorder site taking values in the category of frames and frame homomorphisms between them, if $L$ is $J^{\textup{can}}_{\cal E}$-separated then $L$ is a $J^{\textup{can}}_{\cal E}$-sheaf. This formally follows as a consequence of the proof of Proposition \ref{propsitecharinternallocales} (where only the separatedness assumption was used), but can also be easily seen directly. Let us, for instance, show how the sheaf property for a single epimorphism follows from the separatedness property by using the conditions of existential fibred site.
		
		Let us first observe that, for any epimorphism $e:E'\to E$ and any $x\in L(E)$, $\exists_e(L(e)(x))=x$. This amounts to show that the counit $\exists_e \circ L(e) \to 1_{L(E)}$ of the adjunction $(\exists_e \dashv L(e))$ is an isomorphism, equivalently, that $L(e)$ is full and faithful. Since $L(e):L(E)\to L(E')$ is obviously faithful (as the source category is a preorder), and a meet-semilattice homomorphism, it is equivalent to show that $L(e)$ is injective. But this follows immediately from the fact that $L$ is separated with with respect to the canonical topology on $\cal E$.
		
		Now, suppose that $x'\in L(E')$ is an element such that, whenever $h, k$ are such that $e\circ h=e\circ k$, $L(h)(x')=L(k)(x')$; let us consider the element $x:=\exists_{e}(x')$. We want to show that $x'=L(e)(\exists_{e}(x'))$ (in fact, if there is an amalgamation to this matching family, it is necessarily equal to $\exists_e(x')$, since for any $x\in L(E)$, $x=\exists_e(L(e)(x))$). For this, consider the pullback square  	
		\[\begin{tikzcd}
			P & {E'} \\
			{E'} & E
			\arrow["e", two heads, from=2-1, to=2-2]
			\arrow["e", two heads, from=1-2, to=2-2]
			\arrow["{p_1}", two heads, from=1-1, to=1-2]
			\arrow["{p_2}"', two heads, from=1-1, to=2-1]
			\arrow["\lrcorner"{anchor=center, pos=0.125}, draw=none, from=1-1, to=2-2]
		\end{tikzcd}\]
		
		The arrows $p_1$ and $p_2$ are epimorphisms as they are pullbacks of an epimorphism. Moreover, since $e\circ p_1=e\circ p_2$, we have by our hypothesis that $L(p_1)(x')=L(p_2)(x')$. Now, by the Beck-Chevalley condition, $L(e)(\exists_{e}(x'))=\exists_{p_1}(L(p_2)(x'))$. But $L(p_2)(x')=L(p_1)(x')$, whence $$L(e)(\exists_{e}(x'))= \exists_{p_1}(L(p_1)(x'))=x',$$ where the last equality follows from the fact that $p_1$ is an epimorphism (by what we proved above).
		
		\item From the proof of the Propositon, it is clear that one can equivalently characterize the internal locales over $({\cal C}, J)$ as the existential fibred preorder sites over $({\cal C}, J)$ such that, for every $c\in {\cal C}$, $L(c)$ is a frame, $J^{L}_{c}$ is the canonical topology on it, and the canonical morphism $L\to L_{C_{\pi_{L}}}$ is an isomorphism.  
	\end{enumerate}	
\end{remarks}

\subsection{Open existential fibred sites}

Recall that a geometric morphism $f:{\cal F}\to {\cal E}$ is said to be \emph{open} if its inverse image $f^{\ast}$ is a Heyting functor (equivalently, if it satisfies the equivalent conditions of Theorem C3.1.7 \cite{elephant}).

The following result is a generalization of Proposition 1 from section 8 of Chapter 9 of \cite{maclanemoerdijk}; it provides a sufficient condition for a comorphism of sites to induce an open morphism.

\begin{prop}\label{propopen}
	Let $\pi:({\cal D}, K)\to ({\cal C}, J)$ be a comorphism of small-generated sites. If $\pi$ is cover-preserving and moreover satisfies the property that for any object $d\in {\cal D}$ and arrow $\alpha:c\to \pi(d)$ there are a $J$-covering family $\{f_i:c_i \to c\}$ and for each $i\in I$ an arrow $g_i:d_i\to d$ such that $\alpha \circ f_i=\pi(g_i)$. Then the geometric morphism $C_{\pi}:\Sh({\cal D}, K)\to \Sh({\cal C}, J)$ induced by $F$ is open. 
\end{prop}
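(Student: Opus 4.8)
The plan is to show that $C_{\pi}$ is open by verifying that its inverse image $C_{\pi}^{\ast}$ is a Heyting functor, invoking the characterization of openness in Theorem C3.1.7 of \cite{elephant}. The starting point is the explicit description of the inverse image of a comorphism of sites: for a sheaf $F$ on $({\cal C}, J)$ one has $C_{\pi}^{\ast}(F)=a_{K}(F\circ \pi)$, the sheafification for $K$ of the presheaf obtained by precomposing with $\pi\colon {\cal D}\to {\cal C}$. Being the composite of the pointwise (hence exact) restriction functor with a sheafification, $C_{\pi}^{\ast}$ automatically preserves finite limits, monomorphisms, image factorizations and arbitrary unions of subobjects; consequently it preserves finite meets and joins of subobjects and the existential quantifiers $\exists_{g}$ along arrows. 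Thus the entire content of openness is concentrated in the preservation of the \emph{non-geometric} operations, namely Heyting implication $\Rightarrow$ and universal quantification $\forall_{g}$, and it is here that the two hypotheses on $\pi$ must be used.

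Next I would set up a site-level dictionary for subobjects, reducing to the generating objects $l_{J}(c)$, $c\in {\cal C}$. Since Heyting implication is stable under pullback along the maps of any covering family, and every object of $\Sh({\cal C}, J)$ is covered by such $l_{J}(c)$, it suffices to treat subobjects of the generators. Using the comorphism property together with the density and comparison results recalled in Section~\ref{sec:canonicalrelativesite}, subobjects of $l_{J}(c)$ are identified with $J$-closed sieves on $c$, while subobjects of $C_{\pi}^{\ast}(l_{J}(c))\cong a_{K}({\cal C}(\pi(-),c))$ are identified with $K$-closed families, indexed by $d\in {\cal D}$, of arrows $\pi(d)\to c$ that are stable under precomposition with arrows of ${\cal D}$. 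In these terms $C_{\pi}^{\ast}$ sends a $J$-closed sieve $T$ on $c$ to the $K$-closure of $\{\beta\colon \pi(d)\to c\mid \beta\in T\}$.

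The heart of the proof is the reverse inequality $C_{\pi}^{\ast}S\Rightarrow C_{\pi}^{\ast}T\leq C_{\pi}^{\ast}(S\Rightarrow T)$, the opposite one being formal from $C_{\pi}^{\ast}\big((S\Rightarrow T)\wedge S\big)\leq C_{\pi}^{\ast}T$. Unwinding the definitions at a stage $d$, membership of an element (locally an arrow $\beta\colon\pi(d)\to c$) in $C_{\pi}^{\ast}S\Rightarrow C_{\pi}^{\ast}T$ is a universal statement ranging over the arrows $\delta\colon d'\to d$ of ${\cal D}$, whereas membership in $C_{\pi}^{\ast}(S\Rightarrow T)$ amounts to the universal statement ranging over \emph{all} arrows $\alpha\colon c''\to \pi(d)$ of ${\cal C}$. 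The two hypotheses bridge exactly this gap. Given an arbitrary $\alpha\colon c''\to \pi(d)$, the \textbf{arrow-lifting} property furnishes a $J$-covering family $\{f_{i}\colon c_{i}\to c''\}$ and arrows $g_{i}\colon d_{i}\to d$ of ${\cal D}$ with $\alpha\circ f_{i}=\pi(g_{i})$, so that the ${\cal C}$-quantification over $\alpha$ is, up to a $J$-cover, subsumed by the $\pi$-images of the ${\cal D}$-quantification over $\delta$; if $\beta\alpha\in S$ then $\beta\alpha f_{i}=\beta\pi(g_{i})\in S\subseteq C_{\pi}^{\ast}S$ (as $S$ is a sieve), whence the hypothesis on $\beta$ yields $\beta\pi(g_{i})\in C_{\pi}^{\ast}T$. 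Finally the \textbf{cover-preservation} of $\pi$ converts the resulting $K$-local membership in $C_{\pi}^{\ast}T$ back into a $J$-local, hence (by $J$-closedness of $T$) genuine, membership $\beta\alpha f_{i}\in T$; as $\{f_{i}\}$ is $J$-covering and $T$ is $J$-closed, one concludes $\beta\alpha\in T$, which is the desired inequality.

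I expect the main obstacle to be precisely this reverse inequality, since it is the step where the inherently non-geometric universal quantifier is at stake: inverse image functors do not preserve $\Rightarrow$ or $\forall_{g}$ in general, and the whole substance of openness lies in showing that the quantifier computed over ${\cal D}$ reproduces the one computed over ${\cal C}$. The argument above shows that the arrow-lifting hypothesis is what matches the two ranges of quantification and the cover-preservation hypothesis is what transfers the conclusion across the change of topology; they must be used in tandem. The preservation of the universal quantifiers $\forall_{g}$ (equivalently, via the left adjoints to $C_{\pi}^{\ast}$ on subobject lattices, the relative Beck--Chevalley and Frobenius reciprocity conditions) is then handled by the same local-lifting mechanism, and the passage from the representables $l_{J}(c)$ to a general object $A$ is obtained by covering $A$ by representables and invoking the stability of all operations under pullback, the only delicate book-keeping being to keep the subobject lattices controlled by the $K$-closed sieves of the dictionary above, which is again ensured by cover-preservation. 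This completes the verification that $C_{\pi}^{\ast}$ is a Heyting functor and hence that $C_{\pi}$ is open.
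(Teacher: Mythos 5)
Your central computation is correct: for $J$-closed sieves $S,T$ on $c$, the inequality $C_{\pi}^{\ast}S\Rightarrow C_{\pi}^{\ast}T\leq C_{\pi}^{\ast}(S\Rightarrow T)$ does follow from the two hypotheses exactly as you argue (lift an arbitrary $\alpha:c''\to\pi(d)$ along a $J$-cover into the image of $\pi$, use the hypothesis on $\beta$ at the lifted arrows $g_i$, push the resulting $K$-local membership forward with cover-preservation, and conclude by $J$-closedness of $T$), and this local-lifting mechanism is precisely the one the paper uses to weaken Mac Lane--Moerdijk's surjectivity hypothesis. The gap lies in the overall architecture: openness of $C_{\pi}$ is \emph{not} equivalent to preservation of Heyting implication. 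Theorem C3.1.7 of \cite{elephant}, which you invoke, requires $C_{\pi}^{\ast}$ to be a Heyting functor, i.e.\ to preserve the dual images $\forall_{g}$; implication is only the special case of $\forall$ along monomorphisms ($U\Rightarrow V=\forall_{i_{U}}(i_{U}^{\ast}V)$), and the general case does not follow from it. Your entire treatment of $\forall_{g}$ is the assertion that it is \emph{handled by the same local-lifting mechanism}, but that is exactly the part carrying the real content of openness.

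That this is a genuine gap, and not a routine omission, is shown by the fact that implication-preservation is strictly weaker than openness: the inverse image of any dense subtopos inclusion, e.g.\ $\Sh_{\neg\neg}({\cal E})\hookrightarrow {\cal E}$, preserves $\Rightarrow$ on \emph{every} subobject lattice (the nucleus induced on each $\Sub(A)$ is double negation, and $\neg\neg(U\Rightarrow V)=\neg\neg U\Rightarrow\neg\neg V$ is a Heyting-algebra identity), yet such inclusions are in general not open. So what you have actually proved is only a weak form of openness. Completing the argument requires the infinitary and naturality content hidden in $\forall$: covering the domain of $g:A\to B$ by (possibly infinitely many) generators turns $\forall_{g}$ into an infinite intersection of $\forall$'s along maps between generators, so one needs preservation of arbitrary intersections of subobjects (here this part is in fact easy: cover-preservation makes $V\circ\pi$ already $K$-closed for $J$-closed $V$, so $C_{\pi}^{\ast}$ acts on subobjects by plain precomposition), and, crucially, the Beck--Chevalley naturality of the induced left adjoints $\Sub_{\Sh({\cal D},K)}(C_{\pi}^{\ast}A)\to \Sub_{\Sh({\cal C},J)}(A)$; Frobenius (which is what your $\Rightarrow$-computation amounts to, given the adjoints) does not imply it. This missing verification is essentially the whole of the paper's proof: following Proposition 1 of Section 8, Chapter IX of \cite{maclanemoerdijk}, it constructs these left adjoints at the site level via the sieves $T_{e}$ and uses the local-lifting hypothesis, through a multicomposition-of-sieves argument, precisely to establish the naturality step where Mac Lane and Moerdijk had used surjectivity. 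To finish your proof you would have to carry out that verification explicitly, at which point you would be reproducing the paper's argument rather than bypassing it.
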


\begin{proof}
	We can obtain a proof by modifying an argument used in the proof of Proposition 1 from section 8 of Chapter 9 of \cite{maclanemoerdijk}. In fact, the only point in the proof where the hypothesis that the functor $F/d$ be surjective on objects is used is in the proof that the sieve $T_{eu}$ on $c'$ is $J$-covering, at page 509. We shall use a more refined argument allowing us to replace their hypothesis with ours, which is weaker. For any $k\in u^{\ast}(T_{e})$, by definition of $T_{e}$ there are an arrow $v_{k}:c_k \to \pi(d_k)$ and an arrow $h_{k}:\pi_{d_{k}}\to c$ such that $E(h_{k})(e)\in Q(d_k)$ and the following diagram commutes:
	\[\begin{tikzcd}
		{c_k} & {\pi(d_k)} \\
		{c'} & c
		\arrow["{h_k}", from=1-2, to=2-2]
		\arrow["u"', from=2-1, to=2-2]
		\arrow["k"', from=1-1, to=2-1]
		\arrow["{v_k}", from=1-1, to=1-2]
	\end{tikzcd}\]
	
	Now, by our hypothesis, applied to the arrow $v_{k}$, there are a $J$-covering family $Z_{k}$ and, for each $z\in Z_{k}$, an arrow $g_{z}$ such that $v_{k}\circ z=\pi(g_z)$. We shall prove that $T_{E(u)(e)}$ is $J$-covering by showing that it contains the multicomposition of the sieve $u^{\ast}(T_e)$ with the sieves $\{Z_{k} \mid k\in u^{\ast}(T_e)\}$, that is, by verifying that for any arrow $\xi$ such that $\xi=k\circ z$ where $k\in u^{\ast}(T_e)$ and $z\in Z_{k}$, $\xi\in T_{E(u)(e)}$. In fact, this is true since $E(\xi)(E(u)(e))=E(z)(E(k)(E(u)(u)))=E(\pi_{g_{k}})(E(h_{k}(e)))$, which lies in $Q$ by functoriality of $Q$ since $E(h_{k}(e))$ does.  	    
\end{proof}

\begin{prop}
	Let $L$ be an open existential fibred site. Then, for any arrow $f:c\to c'$ in $\cal C$, the geometric morphism
	\[
	\Sh(L(f))\cong C_{\exists_f}:\Sh(L(c), J^{L}_{c}) \to \Sh(L(c'), J^{L}_{c'})  
	\]
	is open. Moreover, for any $c\in {\cal C}$, the geometric morphism
		\[
	\Sh(i_{c})\cong C_{\textup{ext}_{c}}:\Sh({\cal G}^{\textup{ext}}_{c}(L), \tilde{J}_{c}) \to \Sh(L(c), J^{L}_{c})
	\]
	of Proposition \ref{propfibers} is open. 
\end{prop}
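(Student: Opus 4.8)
The plan is to derive both openness assertions from Proposition \ref{propopen}, the sufficient criterion for a comorphism of sites to induce an open geometric morphism. Since $\Sh(L(f))\cong C_{\exists_f}$ and $\Sh(i_{c})\cong C_{\textup{ext}_{c}}$ (the latter being a comorphism of sites by Proposition \ref{propfibers}(iv)), it suffices to verify, for each of the comorphisms $\exists_{f}:(L(c), J^{L}_{c}) \to (L(c'), J^{L}_{c'})$ and $\textup{ext}_{c}:({\cal G}^{\textup{ext}}_{c}(L), \tilde{J}_{c}) \to (L(c), J^{L}_{c})$, the two hypotheses of that proposition: that the comorphism is cover-preserving, and the lifting property that every arrow into the image of an object can be covered by arrows which are themselves images of arrows of the source.

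For the comorphism $\exists_{f}$, the cover-preservation hypothesis is exactly the defining property of $L$ being open. For the lifting hypothesis I would feed the proposition the relative Frobenius condition: given an object $l$ of $L(c)$ and an arrow $\alpha:l'\to \exists_{f}(l)$ in $L(c')$, the relative Frobenius condition applied to $f:c\to c'$, to $l$ and to $\alpha$ produces a $J^{L}_{c'}$-covering family $\{\overline{\delta}:\exists_{f}(m)\to l' \mid (\rho, \delta)\in Q_{(f, l, \alpha)}\}$, and for each member the accompanying arrow $\rho:m\to l$ of $L(c)$ satisfies $\alpha\circ\overline{\delta}=\exists_{f}(\rho)$ by the very equation defining $Q_{(f, l, \alpha)}$. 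Taking $\{\overline{\delta}\}$ as the covering family and the arrows $\rho$ as the required lifts verifies the hypothesis of Proposition \ref{propopen}, so that $C_{\exists_{f}}$ is open.

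The argument for $\textup{ext}_{c}$ is parallel, but now invokes the relative Frobenius condition for the arrows of $\cal C$ with codomain $c$. Cover-preservation uses the pseudofunctorial coherence isomorphisms $\exists_{f}\circ\exists_{\xi}\cong\exists_{f\xi}$ together with openness: a $\tilde{J}_{c}$-cover of $(f, y)$, with $f:d\to c$, is by definition a family $\{(\xi_{i}, \delta_{i})\}$ whose transposes form a $J^{L}_{d}$-covering family towards $y$, and applying the cover-preserving functor $\exists_{f}:L(d)\to L(c)$ carries this onto the image family $\{\textup{ext}_{c}(\xi_{i}, \delta_{i})\}$, which is therefore $J^{L}_{c}$-covering. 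For the lifting hypothesis, given $(f, y)$ with $f:d\to c$ and an arrow $\alpha:w\to\exists_{f}(y)=\textup{ext}_{c}(f, y)$ in $L(c)$, I would apply the relative Frobenius condition to $f:d\to c$, to $l=y$ and to $\alpha$, obtaining a $J^{L}_{c}$-covering family $\{\overline{\delta}:\exists_{f}(m)\to w\}$ together with arrows $\rho:m\to y$ satisfying $\alpha\circ\overline{\delta}=\exists_{f}(\rho)$. Each domain $\exists_{f}(m)$ is $\textup{ext}_{c}(f, m)$, and the arrow $(1_{d}, \rho):(f, m)\to(f, y)$ of ${\cal G}^{\textup{ext}}_{c}(L)$ is sent by $\textup{ext}_{c}$ to $\exists_{f}(\rho)$, so that $\alpha\circ\overline{\delta}=\textup{ext}_{c}(1_{d}, \rho)$ is precisely the factorization demanded by Proposition \ref{propopen}.

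The conceptual content is thus that both hypotheses of Proposition \ref{propopen} are repackagings of data already at hand: cover-preservation is openness, and the lifting property is the relative Frobenius condition. I expect the only genuine work to be the careful bookkeeping of the transpositions along the adjunctions $\exists\dashv L(-)$ and of the coherence isomorphisms $\exists_{f}\circ\exists_{\xi}\cong\exists_{f\xi}$, needed to identify the covering families produced by the relative Frobenius condition with the covers and lifts required by Proposition \ref{propopen}, and to confirm that $\textup{ext}_{c}$ acts on the relevant morphisms as the expected composites. This matching is the main, though routine, obstacle.
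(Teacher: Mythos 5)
Your proof is correct and takes exactly the paper's approach: both openness assertions are reduced to Proposition \ref{propopen}, with cover-preservation supplied by the openness of $L$ (plus the coherence $\exists_{f}\circ\exists_{\xi}\cong\exists_{f\circ\xi}$ in the case of $\textup{ext}_{c}$) and the lifting hypothesis supplied by the relative Frobenius condition. Your treatment is in fact more explicit than the paper's own proof, which spells out only the case of the comorphism $\exists_{f}$ and leaves the parallel verification for $\textup{ext}_{c}$ implicit.
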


\begin{proof}
	This follows by applying Proposition \ref{propopen} to the comorphism of sites $\exists_{f}:(L(c), J^{L}_{c}) \to (L(c'), J^{L}_{c'})$. The hypotheses of the Proposition are satisfied since $\exists_{f}$ is cover-preserving ($L$ being open) and the property in the statement of the Proposition holds by the Relative Frobenius condition. 
\end{proof}

\subsection{Continuous existential fibred sites}

It is useful, in connection with the problem of intrinsically characterizing the existential fibred sites, to introduce the following strenghtening of the notion of open existential fibred site:

\begin{defn}\label{defcontinuousfibredsite}
An existential fibred site over a small-generated site $({\cal C}, J)$ is said to be \emph{continuous} if the following two conditions are satisfied:
\begin{enumerate}[(a)]
	\item for any arrow $a:E'\to E$ in $\cal C$, the functor $\exists_{a}:(L(E'), J^{L}_{E'})\to (L(E), J^{L}_{E})$ is continuous;
	
	\item for any object $(E, x)$ of ${\cal G}(L)$, the functor
	\[
	\xi_{(E, x)}:({\cal G}(L)\slash (E, x), (J^{\textup{ext}}_{L})_{(E, x)}) \to L(E)\slash x
	\]
	sending an object $[(e, \alpha):(E', x')\to (E, x)]$ of ${\cal G}(L)\slash (E, x)$ to the object $[\overline{\alpha}:\exists_{e}(x') \to x]$ of $L(E)\slash x$ (and acting on arrows in the obvious way) is continuous.
\end{enumerate}
\end{defn} 

\begin{ex}
	For any geometric morphism $f:{\cal F}\to {\cal E}$, the existential fibred site $E_{f}$ over $({\cal E}, J^{\textup{can}}_{\cal E})$ is continuous. Indeed, condition (a) is satisfied since for any arrow $a:E' \to E$ in $\cal E$, the functor $\exists_{a}=\Sigma_{f^{\ast}(a)}:({\cal F}\slash f^{\ast}(E'), J^{\textup{can}}_{{\cal F}\slash f^{\ast}(E')} \to ({\cal F}\slash f^{\ast}(E), J^{\textup{can}}_{{\cal F}\slash f^{\ast}(E)})$ given by composition with $f^{\ast}(a)$ is continuous by Corollary 4.47 \cite{denseness}, as it is a morphism of fibrations over $\cal F$, each endowed with the Giraud topology induced by the canonical one on the base topos $\cal F$.   
	
	As to condition (b), it is satisfied again by Corollary 4.47 \cite{denseness} since, for any object $(E, [x:F\to f^{\ast}(E)])$ of ${\cal G}(E_{f})$, the functor $\xi_{(E, x)}: {\cal G}(E_{f})\slash (E, x) \to  ({\cal F}\slash f^{\ast}(E))\slash [x:F\to f^{\ast}(E)] \simeq {\cal F}\slash F$ is clearly a morphism of fibrations over $\cal F$, each endowed with the Giraud topology induced by $J^{\textup{can}}_{\cal F}$.   
\end{ex}

\begin{prop}
	Let $L$ be a continuous existential fibred site over a small-generated site $({\cal C}, J)$. Then, for any object $c$ of $\cal C$, the comorphism of sites
	\[
	\textup{ext}_{c}:({\cal G}^{\textup{ext}}_{c}(L), \tilde{J}_{c})\to (L(c), J^{L}_{c})
	\]
	is continuous.  
\end{prop}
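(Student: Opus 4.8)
The plan is to verify continuity of $\textup{ext}_{c}$ by reducing it to a condition on slices, and then recognising the resulting slice functors as composites of functors already known to be continuous by the two clauses of Definition \ref{defcontinuousfibredsite}.

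First I would record the elementary principle that \emph{continuity can be checked on slices}: a functor of sites $F:(\mathcal{A}, P)\to (\mathcal{B}, Q)$ is continuous provided that for every object $a$ of $\mathcal{A}$ the induced functor $F\slash a:(\mathcal{A}\slash a, P_{a})\to (\mathcal{B}\slash F(a), Q_{F(a)})$ (each slice carrying the induced topology) is continuous. This is immediate from the sheaf condition: if $H$ is a $Q$-sheaf then its restriction $H_{F(a)}$ along the projection $\mathcal{B}\slash F(a)\to \mathcal{B}$ is a $Q_{F(a)}$-sheaf, so $H_{F(a)}\circ (F\slash a)^{\textup{op}}$, which is precisely the restriction of $H\circ F^{\textup{op}}$ to $\mathcal{A}\slash a$, is a $P_{a}$-sheaf; reading off the sheaf condition at the terminal object $1_{a}$ for every covering sieve yields the sheaf condition for $H\circ F^{\textup{op}}$ at $a$, and letting $a$ vary shows $H\circ F^{\textup{op}}$ is a $P$-sheaf. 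I would also record the companion fact that continuity is inherited by slices: under the equivalences $\Sh(\mathcal{A}\slash a, P_{a})\simeq \Sh(\mathcal{A}, P)\slash l_{P}(a)$ the functor induced by $F\slash a$ is the pullback, along the canonical comparison arrow, of the restriction functor $-\circ F^{\textup{op}}$, and since the latter preserves sheaves when $F$ is continuous and pullbacks of sheaves are sheaves, $F\slash a$ is continuous whenever $F$ is.

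The heart of the argument is a computation of the slice functors of $\textup{ext}_{c}$. Fix an object $(f, y)$ of ${\cal G}^{\textup{ext}}_{c}(L)$, with $f:d\to c$ and $y\in L(d)$. Since an arrow into $(f,y)$ in ${\cal G}^{\textup{ext}}_{c}(L)$ is determined by its image under the projection $p^{L}_{c}$, restricting $p^{L}_{c}$ yields an isomorphism ${\cal G}^{\textup{ext}}_{c}(L)\slash (f,y)\cong {\cal G}(L)\slash (d, y)$; and because $\tilde{J}_{c}$ is by definition the topology induced by $J_{L}^{\textup{ext}}$ through $p^{L}_{c}$, this isomorphism carries the slice topology $(\tilde{J}_{c})_{(f,y)}$ to $(J_{L}^{\textup{ext}})_{(d,y)}$. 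Under this identification the slice functor $\textup{ext}_{c}\slash (f,y)$ sends the object given by $(e,\alpha):(d',y')\to (d,y)$ to $[\exists_{f}(\overline{\alpha}):\exists_{f\circ e}(y')\to \exists_{f}(y)]$, using $\exists_{f}\circ\exists_{e}\cong \exists_{f\circ e}$; that is, it factors as
\[
{\cal G}(L)\slash (d,y)\xrightarrow{\ \xi_{(d,y)}\ } L(d)\slash y \xrightarrow{\ \exists_{f}\slash y\ } L(c)\slash \exists_{f}(y),
\]
where $\exists_{f}\slash y$ denotes the slice at $y$ of $\exists_{f}$.

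Finally I would assemble the pieces. The first factor $\xi_{(d,y)}$ is continuous by condition (b) of Definition \ref{defcontinuousfibredsite}, applied to the object $(d,y)$ of ${\cal G}(L)$. The second factor $\exists_{f}\slash y$ is a slice of $\exists_{f}:(L(d), J^{L}_{d})\to (L(c), J^{L}_{c})$, which is continuous by condition (a), so by the slice-stability recorded above $\exists_{f}\slash y$ is continuous. As a composite of continuous functors of sites is continuous, each slice functor $\textup{ext}_{c}\slash (f,y)$ is continuous, and the slice criterion then gives that $\textup{ext}_{c}$ is continuous. I expect the main obstacle to be the bookkeeping around the induced topologies: checking that the isomorphism ${\cal G}^{\textup{ext}}_{c}(L)\slash (f,y)\cong {\cal G}(L)\slash (d,y)$ really matches $(\tilde{J}_{c})_{(f,y)}$ with $(J_{L}^{\textup{ext}})_{(d,y)}$, and that the three slice topologies in the displayed factorisation chain up correctly so that it is genuinely a factorisation of \emph{morphisms of sites}; the slice-stability of continuity is the other point requiring care, though it is formal once phrased through the slice-topos equivalences.
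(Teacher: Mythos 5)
Your proposal is correct and follows essentially the same route as the paper's own proof: reduce continuity of $\textup{ext}_{c}$ to continuity of the slice functors $\textup{ext}_{c}\slash (f,y)$, factor each of these through $\xi_{(\dom(f),y)}$ and $\exists_{f}\slash y$, and conclude by conditions (b) and (a) of Definition \ref{defcontinuousfibredsite} together with the stability of continuity under slicing. The only (harmless) divergence is at the first leg: where the paper treats the canonical functor ${\cal G}^{\textup{ext}}_{c}(L)\slash (f,y)\to {\cal G}(L)\slash (\dom(f),y)$ as a continuous morphism of fibrations via Corollary 4.47 of \cite{denseness}, you observe that, since ${\cal G}^{\textup{ext}}_{c}(L)$ is the category of elements of a presheaf on ${\cal G}(L)$ and $\tilde{J}_{c}$ is the induced topology, this functor is actually an isomorphism of sites --- a slightly more self-contained justification of the same step.
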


\begin{proof}
Let $c$ be an object of $\cal C$. Then $\textup{ext}_{c}:({\cal G}^{\textup{ext}}_{c}(L), \tilde{J}_{c}) (L(c), J^{L}_{c})$ is continuous if and only if for any object $(f, y)$ of ${\cal G}^{\textup{ext}}_{c}(L)$, the functor 
\[
\textup{ext}_{c}\slash (f, y):({\cal G}^{\textup{ext}}_{c}(L)\slash (f, y), (\tilde{J}_{c})_{(f, y)}) \to (L(c)\slash (\exists_{f}(y)), (J^{L}_{c})_{\exists_{f}(y)})
\]	
is continuous.

But this functor is the composite of the canonical functor
\[
{\cal G}^{\textup{ext}}_{c}(L)\slash (f, y) \to {\cal G}(L)\slash (\dom(f), y)
\]
which is $(\tilde{J}_{c}, J^{\textup{ext}_{L}}_{(\dom(f), y)})$-continuous by Corollary 4.47 \cite{denseness} as it is a morphism of fibrations over ${\cal G}(L)$, each endowed with the Giraud topology induced by the existential topology on ${\cal G}(L)$, followed by the functor
\[
\xi_{(E, x)}:({\cal G}(L)\slash (\dom(f), y), (J^{\textup{ext}}_{L})_{(\dom(f), y)}) \to (L(\dom(f))\slash y, (J^{L}_{\dom(f)})_{y})
\]
of Definition \ref{defcontinuousfibredsite}(b), which is continuous (with respect to the relevant topologies) by our hypotheses, and the functor
\[
\exists_{f}\slash y:L(\dom(f))\slash y \to L(c)\slash \exists_{f}(y) 
\] 
which is also continuous by our hypotheses (as a relativisation of a continuous functor). Therefore, it is continuous, as required.
\end{proof}

We can apply the criterion for a continuous comorphism of sites to induce an equivalence of toposes provided by Proposition 7.17 \cite{denseness} to obtain necessary and sufficient conditions for the canonical morphism of Proposition \ref{propfibers} between the fiber of the associated existential topos and the topos of sheaves on the fiber with the respective topology to be an equivalence. This leads to the following result:

\begin{prop}\label{propcharcontinuousinducingequivalence}
	Let $L$ be a continuous existential fibred site over a small-generated site $({\cal C}, J)$. Then, for any object $c$ of $\cal C$, the following conditions are equivalent:
	\begin{enumerate}[(i)]
		\item the geometric morphism 		
		\[
		\Sh(i_{c})\cong C_{\textup{ext}_{c}}:\Sh({\cal G}^{\textup{ext}}_{c}(L), \tilde{J}_{c}) \to \Sh(L(c), J^{L}_{c})
		\]
		of Proposition \ref{propfibers} is an equivalence;
		
		\item the comorphism of sites  
		\[
		\textup{ext}_{c}:({\cal G}^{\textup{ext}}_{c}(L), \tilde{J}_{c}) \to (L(c), J^{L}_{c})
		\]
		is $\tilde{J}_{c}$-full, $\tilde{J}_{c}$-faithful and $J^{L}_{c}$-dense.
	\end{enumerate}
\end{prop}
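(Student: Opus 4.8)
The plan is to read off the result as a direct instance of the criterion for a continuous comorphism of sites to induce an equivalence, namely Proposition 7.17 of \cite{denseness}: a continuous comorphism of sites $G:({\cal D}, K)\to ({\cal C}, J)$ induces an equivalence $C_G:\Sh({\cal D}, K)\to \Sh({\cal C}, J)$ if and only if $G$ is $K$-full, $K$-faithful and $J$-dense. The whole argument amounts to verifying that we are in the scope of this criterion and then matching notation.

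First I would invoke the preceding proposition to record that, under the continuity hypothesis on $L$, the comorphism of sites
\[
\textup{ext}_{c}:({\cal G}^{\textup{ext}}_{c}(L), \tilde{J}_{c})\to (L(c), J^{L}_{c})
\]
is \emph{continuous}. This is precisely the point at which the continuity assumption on $L$ is used, and it is the reason the present statement is formulated for continuous existential fibred sites rather than arbitrary ones; that $\textup{ext}_{c}$ is in addition a comorphism of sites was already established in Proposition \ref{propfibers}(iv).

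Next I would instantiate the cited criterion with $({\cal D}, K)=({\cal G}^{\textup{ext}}_{c}(L), \tilde{J}_{c})$ and $({\cal C}, J)=(L(c), J^{L}_{c})$, so that $G=\textup{ext}_{c}$ and $C_G=C_{\textup{ext}_{c}}$. Under this identification the $K$-full, $K$-faithful and $J$-dense conditions of the criterion become verbatim the $\tilde{J}_{c}$-full, $\tilde{J}_{c}$-faithful and $J^{L}_{c}$-dense conditions of (ii), while the conclusion that $C_G$ is an equivalence coincides, via the isomorphism $\Sh(i_{c})\cong C_{\textup{ext}_{c}}$ of Proposition \ref{propfibers}(iv), with condition (i). The equivalence of (i) and (ii) follows at once.

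Accordingly I do not expect any genuine obstacle beyond the preliminary verification of continuity already carried out in the previous proposition: once that is in hand, the remainder is the bookkeeping of matching the notation of the present statement to the hypotheses of Proposition 7.17 of \cite{denseness}. The only conceptual subtlety worth flagging is that continuity is not cosmetic here — it is exactly what licenses the use of that criterion, since the corresponding characterization fails for a general (non-continuous) comorphism of sites.
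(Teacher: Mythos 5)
Your proposal is correct and takes essentially the same route as the paper: the paper likewise treats the statement as an immediate consequence of the criterion of Proposition 7.17 of \cite{denseness} for a \emph{continuous} comorphism of sites to induce an equivalence, applied to $\textup{ext}_{c}$, whose continuity is exactly what the preceding proposition establishes from the continuity hypothesis on $L$. Your identification of the notation (including using $\Sh(i_{c})\cong C_{\textup{ext}_{c}}$ from Proposition \ref{propfibers}(iv) to phrase condition (i)) is precisely how the paper reads off the result.
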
\qed

\subsection{Intrinsic characterizations}

The following theorem provides intrinsic characterizations of the property of a fibred site to be of the form $E_{f}$ for a geometric morphism $f:{\cal F}\to {\cal E}$:

\begin{thm}
	Let $({\cal C}, J)$ be a small-generated site and $L$ a fibred site over $({\cal C}, J)$. Then the following conditions are equivalent:
	
	\begin{enumerate}[(i)]
		\item $L$ is of the form $E_{f}$ for a geometric morphism $f:{\cal F}\to \Sh({\cal C}, J)$;
		
		\item $L$ is existential, $L(c)$ is a topos, $J(c)$ is the canonical topology on it and the morphism of sites
		\[
		i_{c}:(L(c), J^{L}_{c}) \to ({\cal G}^{\textup{ext}}_{c}(L), \tilde{J}_{c})
		\] 
		is weakly dense (in the sense of Definition 5.4 of \cite{denseness});
		
		\item $L$ is a continuous and existential, $L(c)$ is a topos, $J(c)$ is the canonical topology on it and, for any $c\in {\cal C}$, the comorphism of sites  
		\[
		\textup{ext}_{c}:({\cal G}^{\textup{ext}}_{c}(L), \tilde{J}_{c}) (L(c), J^{L}_{c})
		\]
		is $\tilde{J}_{c}$-full, $\tilde{J}_{c}$-faithful and $J^{L}_{c}$-dense. 
	\end{enumerate}
\end{thm}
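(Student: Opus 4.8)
The plan is to show that all three conditions are equivalent to the single geometric statement that, for every object $c$ of $\cal C$, the hyperconnected morphism $\Sh(i_{c})\cong C_{\textup{ext}_{c}}$ of Proposition \ref{propfibers}(iv) is an \emph{equivalence}, and that this is precisely what is needed to reconstruct $L$ as $E_{f}$ for the candidate morphism $f:=C_{\pi_{L}}:\Sh({\cal G}(L), J_{L}^{\textup{ext}})\to \Sh({\cal C}, J)$, which is the existential topos of $L$. The backbone is the fibre computation of Proposition \ref{propfibers}(ii), identifying the fibre of the existential topos of $L$ at $c$, namely $\Sh({\cal G}(L), J_{L}^{\textup{ext}})\slash C_{\pi_{L}}^{\ast}(l(c))$, with $\Sh({\cal G}^{\textup{ext}}_{c}(L), \tilde{J}_{c})$; by the definition of $E_{f}$ this fibre is exactly $E_{f}(c)$. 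Since the codomain of $\Sh(i_{c})$ is $\Sh(L(c), J^{L}_{c})$, which equals $L(c)$ whenever $L(c)$ is a topos carrying its canonical topology, the equivalence of $\Sh(i_{c})$ says exactly that $E_{f}(c)\simeq L(c)$.

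First I would treat (i) $\Rightarrow$ (ii) and (i) $\Rightarrow$ (iii) in one stroke. Assuming $L=E_{f}$, the Example of the previous subsection shows $E_{f}$ is continuous and existential, each fibre ${\cal F}\slash f^{\ast}(E)$ is a topos with its canonical topology, so the structural clauses of (ii) and (iii) hold. For the equivalence of $\Sh(i_{c})$ I would use the identification ${\cal G}(E_{f})\cong(1_{\cal F}\downarrow f^{\ast})$ under which $J_{E_{f}}^{\textup{ext}}$ becomes $J_{f}$: this makes the existential topos of $E_{f}$ equal to $\Sh((1_{\cal F}\downarrow f^{\ast}), J_{f})\simeq {\cal F}$, whence its fibre at $c$ is ${\cal F}\slash f^{\ast}(l(c))=E_{f}(c)=L(c)$ and $\Sh(i_{c})$ is an equivalence. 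Feeding this into Proposition \ref{propcharcontinuousinducingequivalence} yields the $\tilde{J}_{c}$-full, $\tilde{J}_{c}$-faithful, $J^{L}_{c}$-dense clause of (iii); and it yields weak denseness of $i_{c}$ for (ii), via the characterization in \cite{denseness} of the morphisms of sites inducing an equivalence as the weakly dense ones.

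For the converses (iii) $\Rightarrow$ (i) and (ii) $\Rightarrow$ (i) I would run the reconstruction in reverse. In case (iii) continuity is a hypothesis, so Proposition \ref{propcharcontinuousinducingequivalence} turns the full/faithful/dense clause into the assertion that $\Sh(i_{c})$ is an equivalence for every $c$; in case (ii) weak denseness of $i_{c}$ gives the same conclusion through the same characterization from \cite{denseness}. In both cases Proposition \ref{propfibers}(ii) then gives $E_{f}(c)=\Sh({\cal G}(L), J_{L}^{\textup{ext}})\slash C_{\pi_{L}}^{\ast}(l(c))\simeq \Sh(L(c), J^{L}_{c})\cong L(c)$ with $f=C_{\pi_{L}}$. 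It remains to upgrade these fibrewise equivalences to an equivalence of fibred sites $L\simeq E_{f}$, i.e. to check naturality in $c$: the transition functor $E_{f}(k)$ is pullback along $f^{\ast}(l(k))$, described in Proposition \ref{propfibers}(iii), and one must verify it matches $L(k)$ under the identifications, which I would do using the compatibility diagram of Proposition \ref{propfibers}(v) relating $\textup{ext}_{c}$, $\exists_{k}$ and the fibrewise transition functors. This parallels, at the level of toposes rather than frames, the reconstruction $L\cong L_{C_{\pi_{L}}}$ carried out for internal locales in Proposition \ref{propsitecharinternallocales}.

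I expect the main obstacle to be twofold. The more delicate point is the naturality in $c$ just described: passing from the pointwise equivalences $\Sh(i_{c})$ to a genuine equivalence of indexed categories $L\simeq E_{C_{\pi_{L}}}$ requires the pseudonaturality squares to commute coherently, so Proposition \ref{propfibers}(iii),(v) must be combined carefully rather than invoked formally. The second point concerns (ii) alone: since (ii) does not assume continuity a priori, Proposition \ref{propcharcontinuousinducingequivalence} is not directly available, so the bridge from weak denseness of $i_{c}$ to the equivalence of $\Sh(i_{c})$ must rest purely on the intrinsic characterization of weakly dense morphisms of sites in \cite{denseness}; continuity of $L$ is then recovered \emph{a posteriori} from (i), closing the cycle (ii) $\Rightarrow$ (i) $\Rightarrow$ (iii).
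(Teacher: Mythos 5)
Your proposal is correct and follows essentially the same route as the paper: its (two-line) proof likewise obtains (i) $\Leftrightarrow$ (ii) from the characterization of site morphisms inducing equivalences as the weakly dense ones (Theorem 5.7 of \cite{denseness}) combined with Proposition \ref{propfibers}, and (i) $\Leftrightarrow$ (iii) from Proposition \ref{propcharcontinuousinducingequivalence}. The only difference is one of explicitness: you spell out the reduction to the fibrewise equivalences $\Sh(i_{c})$, the identification of the existential topos of $E_{f}$ with $\cal F$, and the pseudonaturality check needed to reconstruct $L\simeq E_{C_{\pi_{L}}}$, all of which the paper leaves implicit in its citations.
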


\begin{proof}
The equivalence between (i) and (ii) follows from Theorem 5.7 \cite{denseness}, in light of Proposition \ref{propfibers}, while the equivalence between (i) and (iii) follows from Proposition \ref{propcharcontinuousinducingequivalence}.	
\end{proof}

\subsection{Co-Giraud topologies}

It is natural to consider, in the context of the relative sites associated with a localic morphism $f:{\cal F}\to {\cal E}$, the functor
\[
s_{f}:{\cal E}\to \CAT
\]
sending an object $E$ of $\cal E$ to the poset $\Sub_{\cal F}(f^{\ast}(E))$ and an arrow $e:E\to E'$ to the map $\exists_{f^{\ast}(e)}:\Sub_{\cal F}(f^{\ast}(E)) \to \Sub_{\cal F}(f^{\ast}(E'))$. In fact, ${\cal G}(E_{f})$ coincides with the covariant Grothendieck construction of $s_{f}$ (since, for each arrow $g:F\to F'$ in $\cal F$, the pullback functor $g^{\ast}:\Sub_{\cal F}(F')\to \Sub_{\cal F}(F)$ is right adjoint to the functor $\exists_{g}:\Sub_{\cal F}(F)\to \Sub_{\cal F}(F')$), whence the canonical projection functor
\[
p_{E_{f}}:{\cal G}(E_{f}) \to {\cal E}
\]
is both a fibration and a opfibration, and all the morphisms of the form 
\[
e:(E, l) \to (E', \exists_{f^{\ast}(E)}(l))
\] 
are co-cartesian with respect to the opfibration $s_{f}$.  

We shall say that \emph{co-relative site} over a base site $({\cal C}, J)$ is an opfibration $q:{\cal D}\to {\cal C}$ with a Grothendieck topology $H$ on $\cal D$ which contains the \emph{co-Giraud topology} induced by $J$ on $\cal D$ (that is, the Grothendieck topology on $\cal D$ whose covering families are those which contain families of co-cartesian arrows obtained by lifting $J$-covering families along $p$). 

We shall say that a co-relative topology is \emph{co-orthogonally generated} if it is generated by a collection of families which are either vertical or entirely consisting of co-cartesian arrows.

\begin{prop}
	Let $\cal E$ be a Grothendieck topos and $L$ an internal locale in $\cal E$. Then the canonical relative topology of $L$ (over the canonical site of $\cal E$) is co-orthogonally generated as a co-relative site over ${\cal E}$ (but not necessarily orthogonally generated over $\cal E$). 
\end{prop}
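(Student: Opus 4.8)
The plan is to read off the co-orthogonal generation directly from the co-cartesian/vertical factorisation of arrows in ${\cal G}(L)$ already used in the proof of Theorem \ref{thmmainexistential}. Recall from that proof, and from the description of the opfibration $p_{E_{f}}$ earlier in this section, that the left adjoints $\exists_{e}$ make $p_{L}:{\cal G}(L)\to {\cal E}$ an opfibration whose co-cartesian arrows are exactly those of the form $(e, \eta_{e}(l)):(E', l)\to (E, \exists_{e}(l))$, and that every arrow admits the factorisation
\[
(e, \alpha)\;=\;(1_{E}, \overline{\alpha})\circ(e, \eta_{e}(l')),\qquad (e, \alpha):(E', l')\to (E, l),
\]
into a co-cartesian arrow followed by a vertical one. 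I would take as generating families the \emph{vertical} ones, namely $\{(1_{E}, \beta_{i}):(E, l_{i})\to (E, l)\}_{i}$ with $\{\beta_{i}\}_{i}$ a $J^{L}_{E}$-cover of $l$ in the fibre, together with the \emph{co-cartesian singletons} $\{(e, \eta_{e}(l)):(E', l)\to (E, \exists_{e}(l))\}$.

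First I would check that both classes consist of $J^{\textup{ext}}_{L}$-covering families. For a vertical family $e=1_{E}$, so $\exists_{1_{E}}\cong \id$ and $\overline{\beta_{i}}=\beta_{i}$; hence the defining condition of the existential topology is exactly that $\{\beta_{i}\}$ be a fibre-cover of $l$, which is how the family was selected. For a co-cartesian singleton the transpose of the unit $\eta_{e}(l)$ is the identity $\id_{\exists_{e}(l)}$, so the associated one-element family in the fibre is the maximal sieve and is trivially $J^{L}_{E}$-covering.

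The key step is the converse: every $J^{\textup{ext}}_{L}$-cover is a multicomposition of these generators. Given a cover $\{(e_{i}, \alpha_{i}):(E_{i}, l_{i})\to (E, l)\}_{i}$, by definition $\{\overline{\alpha_{i}}:\exists_{e_{i}}(l_{i})\to l\}_{i}$ is a $J^{L}_{E}$-cover, so $\{(1_{E}, \overline{\alpha_{i}}):(E, \exists_{e_{i}}(l_{i}))\to (E, l)\}_{i}$ is one of the vertical generating covers; multicomposing it with the co-cartesian singletons $\{(e_{i}, \eta_{e_{i}}(l_{i}))\}$, one over each index $i$, and invoking the factorisation above, reproduces exactly the family $\{(e_{i}, \alpha_{i})\}_{i}$. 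Since a Grothendieck topology is closed under multicomposition (as noted in the proof of Theorem \ref{thmmainexistential}), the topology generated by the two classes contains $J^{\textup{ext}}_{L}$; as the generators lie in $J^{\textup{ext}}_{L}$, the two coincide, whence $J^{\textup{ext}}_{L}$ is co-orthogonally generated. I expect this part to be essentially bookkeeping, the substance having been done in Theorem \ref{thmmainexistential}; the only point to watch is that the multicomposition be read at the level of the generated sieves, which is immediate from the factorisation.

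I expect the main obstacle to be the parenthetical claim that orthogonal generation may \emph{fail}, whose source is the asymmetry between $\exists_{e}$ and $L(e)$: a cartesian arrow over $e$ into $(E, \exists_{e}(l))$ has domain $(E', L(e)(\exists_{e}(l)))$, and $l\leq L(e)(\exists_{e}(l))$ with equality precisely when the unit $\eta_{e}(l)$ is an isomorphism, so whenever some $\eta_{e}(l)$ is not invertible the co-cartesian singleton cover of $(E, \exists_{e}(l))$ lies strictly below everything the cartesian arrows reach. To settle it I would exhibit one explicit internal locale where this occurs — for instance working over a small site of definition given by the arrow category ${\cal C}=\{0\xrightarrow{u} 1\}$ (the notions of vertical, cartesian and co-cartesian being attached to the comorphism $p_{L}$, which is available over any site of definition), with $L(0)$ a three-element chain, $L(1)$ a two-element chain, and $L(u):L(1)\hookrightarrow L(0)$ the frame inclusion missing the middle element, so that $\exists_{u}$ collapses the middle element to the top and $\eta_{u}$ is not an isomorphism there. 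A direct check (Beck--Chevalley is trivial for ${\cal C}$ and Frobenius is immediate) then shows that the co-cartesian singleton cover of the top element over $1$ is $J^{\textup{ext}}_{L}$-covering, while it is not covering for the topology generated by the vertical and cartesian families: pulling its generated sieve back along the cartesian cover $(0, 1)\to (1,1)$ yields the principal sieve on the middle element in $L(0)$, which is not a fibre-cover. This exhibits a strict containment and so the failure of orthogonal generation.
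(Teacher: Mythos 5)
Your proof of the displayed claim --- that $J^{\textup{ext}}_{L}$ is co-orthogonally generated --- is correct and follows the paper's own route exactly: factor each arrow of a covering family as a co-cartesian arrow followed by a vertical one, and recover the family as a multicomposite of a vertical cover with co-cartesian singletons; your checks that the generators are themselves $J^{\textup{ext}}_{L}$-covering and that the two topologies coincide are just the bookkeeping the paper leaves implicit. Note, however, that the paper's proof stops there: the parenthetical ``not necessarily orthogonally generated'' is left as an unproved caveat, so the extra work you attempt goes beyond what the paper establishes.

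That attempt is where the genuine gap lies: your proposed counterexample is not an internal locale. Beck--Chevalley is \emph{not} trivial over ${\cal C}=\{0\xrightarrow{u}1\}$. The cospan $0\xrightarrow{u}1\xleftarrow{u}0$ admits exactly one span over it, namely $(1_{0},1_{0})$ (there are no arrows $1\to 0$), so the relative Beck--Chevalley condition of Proposition \ref{propsitecharinternallocales} --- equivalently, the Beck--Chevalley condition of Theorem \ref{thm:existentialcartesian} applied to the pullback of $u$ along itself, which is $0$ with both projections the identity --- forces $L(u)\circ\exists_{u}=\mathrm{id}_{L(0)}$. Your $L$ has $\exists_{u}(m)=1$, hence $L(u)(\exists_{u}(m))=1\neq m$, so Beck--Chevalley fails and $L$ is neither an internal locale nor even an existential fibred site. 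Indeed, your closing computation refutes itself: you exhibit a ``$J^{\textup{ext}}_{L}$-covering'' sieve (generated by the co-cartesian singleton into $(1,1)$) whose pullback along the cartesian arrow $(0,1)\to(1,1)$ is not covering; since by Theorem \ref{thmmainexistential} the existential families form a Grothendieck topology (in particular a pullback-stable one) precisely when $L$ is existential, what this computation proves is that your $L$ is not existential --- not that orthogonal generation fails.

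The obstruction is structural, so the example cannot be patched within your setup: every arrow of a poset is monic, and Beck--Chevalley at the kernel pair of a mono always yields $L(u)\circ\exists_{u}=\mathrm{id}$, i.e. $\exists_{u}$ injective, ruling out any example built from a non-surjective frame inclusion $L(u)$ over a poset base. Worse for the strategy, over $(\Set, J^{\textup{can}}_{\Set})$ one can check that the existential topology of \emph{every} internal locale is orthogonally generated (cover $(E,l)$ by the cartesian family over the points $1\to E$, and observe that any $J^{\textup{ext}}_{L}$-covering sieve on an object of the form $(1,v)$ already contains a vertical covering family). So a genuine counterexample must be sought over a base topos, and with an internal locale, chosen compatibly with Beck--Chevalley; and, strictly speaking, it must be formulated over the canonical site of $\cal E$, which is what the statement refers to, rather than over a small site of definition.
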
 

\begin{proof}
	The thesis follows at once by noticing that, given a $J_{L}^{\textup{can}}$-covering family	
	\[
	\{e_i:(E_i, l_i)\to (E, l) \mid i\in I\}
	\] 
	we have, for each $i\in I$, the following factorization of $e_{i}$ in ${\cal G}(L)$ as a co-cartesian arrow followed by a vertical arrow:
	\[\begin{tikzcd}
		{(E_i, l_i)} & {(E, \exists_{e_i}(l_i))} \\
		& {(E, l)}
		\arrow["{1_E}", from=1-2, to=2-2]
		\arrow["{e_i}", from=1-1, to=1-2]
		\arrow["{e_i}"', from=1-1, to=2-2]
	\end{tikzcd}\]
	
	So the family $\{e_i:(E_i, l_i)\to (E, l) \mid i\in I\}$ can be written as the multicomposite of the covering family $\{ (E, \exists_{e_i}(l_i)) \to (E, l)  \mid i\in I \}$ of vertical arrows with the single co-cartesian covering arrows $e_i:(E_i, l_i) \to (E, \exists_{e_i}(l_i))$ (for $i\in I$). Thus the topology $J^{\textup{ext}}_{L}$ is co-orthogonally generated, as desired.
\end{proof}

\begin{remark}
	In a completely analogous way, one can show that, for any geometric morphism $f:{\cal F}\to {\cal E}$, the existential fibred site $({\cal G}(E_{f}), J^{\textup{ext}}_{E_{f}})$ associated with $f$ is co-orthogonally generated over $({\cal E}, J^{\textup{can}}_{\cal E})$.
\end{remark}

The representation of relative toposes as existential toposes associated with existential fibred sites generalizes the construction of toposes of sheaves on a locale. In other words, this formalism allows us to regard toposes \textit{as if they were} toposes of sheaves on a locale. This can be a powerful point of view, both geometrically and logically, by the relative Diaconescu's theorem (yielding the fullness and faithfulness of the embedding, analogous to the full and faithful embedding of the category of locales into that of Grothendieck toposes). The notion of existential topology is fundamental as it allows one to reconstruct a relative topos directly from the associated fibred site. 

The logical counterpart of this point of view on relative toposes is related to the doctrinal approach to categorical logic (cf. \cite{hyperdoctrines}) and would deserve to be thoroughly investigated in connection with the development of relative geometric logic.

\section{The completion of a fibred preorder site}\label{sec:completionfibredpreordersite}

\begin{defn}
	\begin{enumerate}[(a)]
		\item 	A \emph{fibered preorder} over a category $\cal C$ is a pseudofunctor ${\cal C}^{\textup{op}}\to \CAT$ which takes values in the full subcategory of $\CAT$ on preorder categories. 
		
		\item A \emph{fibered preordered site} over a small-generated site $({\cal C}, J)$ is a fibered preorder $\mathbb D$ over $\cal C$ together with a Grothendieck topology on ${\cal G}({\mathbb D})$ which contains the Giraud topology $M_{J}^{p_{\mathbb D}}$.  
	\end{enumerate}	
\end{defn}

Let ${\mathbb P}:{\cal C}^{\textup{op}} \to \CAT$ be a fibered preorder, and $J$ a Grothendieck topology on $\cal C$. Then for any relative site $p:({\cal G}({\mathbb P}), K)\to ({\cal C}, J)$ over $({\cal C}, J)$, the geometric morphism
\[
C_p:\Sh({\cal G}({\mathbb D}), K)\to \Sh({\cal C}, J)
\]
is localic (by Proposition 7.11 \cite{denseness}). In this section we shall compute the internal locale $L_{C_p}$ corresponding to it. We shall see that it represents a sort of \ac completion' of ${\mathbb D}$ with respect to the Grothendieck topology $K$. 

Recall from \cite{CaramelloStone} that, given a preorder site $({\cal C}, J)$, the frame $\textup{Id}_{J}({\cal C})$ of $J$-ideals on $\cal C$ is a frame which, together with the canonical map ${\cal C}\to \textup{Id}_{J}({\cal C})$ (sending any element of $\cal C$ to the $J$-closure of the principal ideal generated by $c$), satisfies the following universal property: every order-preserving map ${\cal C} \to F$ to a frame $F$ which is filtering and sends $J$-covering families to joins in $F$ can be extended uniquely along the canonical map ${\cal C}\to \textup{Id}_{J}({\cal C})$ to a frame homomorphism $\textup{Id}_{J}({\cal C}) \to F$. The particular case of this result for meet-semilattices was established in \cite{johnstone.stonespaces} and played a key role in the development of formal topology. 

In this section, we are going to establish the fibered generalization of this theorem: preorder sites are replaced by fibred preorder sites, frames by internal frames, morphisms of preorder sites by morphisms of fibred preorder sites and morphisms of frames by morphisms of internal frames.

\newcommand{\vin}{\rotatebox[origin=c]{-90}{$\mathlarger{\in}$}}


$$
\resizebox{\textwidth}{!}{	
\begin{tikzcd}[row sep=small, column sep=0.02pt, ampersand replacement=\&]
	\&\&\& {\textit{\footnotesize Localic morphism}} \\
	\&\& {\textbf{Sh}({\cal G}({\mathbb P}), K)} \&\& {\textbf{Sh}_{\textbf{Sh}({\cal C}, J)}(L^{K}_{\mathbb P})} \\
	\&\&\& {\textbf{Sh}({\cal C}, J)} \\
	{({\cal G}({\mathbb P}), K)} \&\&\&\&\&\& {L^{K}_{\mathbb P}} \\
	{({\cal C}, J)} \&\&\&\&\&\& {\textbf{Sh}({\cal C},J)} \\
	{\textit{\footnotesize Relative preorder site}} \&\&\&\&\&\& {\textit{\footnotesize Internal locale}}
	\arrow["\vin", shift right=3, draw=none, from=4-7, to=5-7]
	\arrow["{p_{\mathbb P}}", from=4-1, to=5-1]
	\arrow["{C_{{\cal G}(\eta_{\mathbb P})}}", shift left=2, from=2-3, to=2-5]
	\arrow["{\textbf{Sh}({\cal G}(\eta_{\mathbb P}))}", shift left=2, from=2-5, to=2-3]
	\arrow["{C_{p_{\mathbb P}}}"', shift right=2, from=2-3, to=3-4]
	\arrow[from=2-5, to=3-4]
	\arrow["\sim", shift right=2, draw=none, from=2-3, to=2-5]
	\arrow[shift right=3,  bend left=18, shorten <=8pt, dashed, from=4-1, to=2-3]
	\arrow[bend left=18, dashed, no head, from=2-5, to=4-7]
\end{tikzcd}}
$$

The following \ac bridge' shows that the construction of the internal locale $L^{K}_{\mathbb P}$ of a relative preorder site $p_{\mathbb P}:({\cal G}({\mathbb P}), K) \to ({\cal C}, J)$ actually yields a reflection of the category of relative preorder sites over $({\cal C}, J)$ into the category of internal frames to the topos $\Sh({\cal C}, J)$:

$$
\resizebox{\textwidth}{!}{	
\begin{tikzcd}[row sep=tiny, column sep=tiny, ampersand replacement=\&]
	\&\& {\textit{\footnotesize Morphism over }} \Sh({\cal C}, J) \\[-10pt]
	\&\& \textit{\footnotesize from }\textbf{Sh}_{\textbf{Sh}({\cal C}, J)}(L')\,\textit{\footnotesize to}\vspace{0.3cm}  \\[7pt]
	\& {\textbf{Sh}({\cal G}({\mathbb P}), K)} \&\& {\textbf{Sh}_{\textbf{Sh}({\cal C}, J)}(L^{K}_{\mathbb P})} \\[5pt]
	\&\& {\textbf{Sh}({\cal C}, J)} \\
	{({\cal G}({\mathbb P}), K)} \&\&\&\& {L^{K}_{\mathbb P}} \\[5pt]
	{({\cal C}, J)} \&\&\&\& {\textbf{Sh}({\cal C},J)} \\
	{\textit{\footnotesize Morphism of relative sites} } \&\&\&\& {\textit{\footnotesize Locale morphism } } \\[-10pt]
	{p_{\mathbb P} \to p_{{\cal G}(L')}} \&\&\&\& {L'\to L^{K}_{\mathbb P}}
	\arrow["\vin", shift right=3, draw=none, from=5-5, to=6-5]
	\arrow["{p_{\mathbb P}}", from=5-1, to=6-1]
	\arrow["{C_{{\cal G}(\eta_{\mathbb P})}}", shift left=2, from=3-2, to=3-4]
	\arrow["{\textbf{Sh}({\cal G}(\eta_{\mathbb P}))}", shift left=2, from=3-4, to=3-2]
	\arrow["{C_{p_{\mathbb P}}}"', shift right=2, from=3-2, to=4-3]
	\arrow[from=3-4, to=4-3]
	\arrow["\sim", shift right=2, draw=none, from=3-2, to=3-4]
	\arrow[shift right=3, bend left=18, shorten <=6pt, dashed, no head, from=5-1, to=3-2]
	\arrow[bend left=18, dashed, no head, from=3-4, to=5-5]
\end{tikzcd}}
$$

Indeed, by Theorem \ref{thm:RelativeDiaconescuCartesian} (cf. Remark \ref{remrelDiaconescu}), the category of morphisms over $\Sh({\cal C}, J)$ from $\Sh_{\Sh({\cal C}, J)}(L')$ to $(\Sh({\cal G}({\mathbb P}), K))\to \Sh({\cal C}, J)$ is equivalent to the category $$\textup{\bf MorRelSites}_{({\cal C}, J)}((p_{\mathbb P}:({\cal G}({\mathbb P}), K)) \to ({\cal C}, J), (p_{L}({\cal G}(L), J^{\textup{ext}}_{L}) \to ({\cal C}, J)))$$ of morphisms of relative sites over $({\cal C}, J)$ from $p_{\mathbb P}$ to $p_{{\cal G}(L)}$ and natural transformations between them. 

On the other hand, by Theorem \ref{cor:fullandfaithfulnessinternallocales}, the category of morphisms over $\Sh({\cal C}, J)$ from $\Sh_{\Sh({\cal C}, J)}(L')$ to $\Sh_{\Sh({\cal C}, J)}(L^{K}_{\mathbb P})$ is equivalent to the opposite of the category $\textup{\bf Frm}_{\Sh({\cal C}, J)}$ of internal frames to the topos $\Sh({\cal C}, J)$ and internal frame homomorphisms between them.

Summarizing, we have the following result:

\begin{thm}
	Let $({\cal C}, J)$ be a small-generated site, $\textup{\bf MorRelSites}_{({\cal C}, J)}$ the $2$-category of relative preorder sites over $({\cal C}, J)$ and morphisms of relative sites between them and $\textup{\bf Frm}_{\Sh({\cal C}, J)}$ the $2$-category of internal frames to the topos $\Sh({\cal C}, J)$ and internal frame homomorphisms between them. Then the functors 
	\[
	(p_{\mathbb P}:({\cal G}({\mathbb P}), K) \to ({\cal C}, J)) \mapsto L^{K}_{\mathbb P}
	\]
	and 
	\[
	L \mapsto ({\cal G}(L), J^{\textup{ext}}_{L})
	\]
	are $2$-adjoint to each other (the former on the left and the latter on the right) and define a reflection of (since the former functor identifies $\textup{\bf MorRelSites}_{({\cal C}, J)}$ with a full subcategory of $\textup{\bf Frm}_{\Sh({\cal C}, J)}$). 
	
	The unit of this reflection is the dense bimorphism of sites $\eta_{{\cal G}({\mathbb P})}$ of Proposition \ref{prop:unitfibredpreorder}, which induces the equivalences
	\[\begin{tikzcd}
		{\Sh({\cal G}({\mathbb P}), K)} && {\Sh({\cal G}(L^{K}_{\mathbb P}), J^{\textup{ext}}_{L^{K}_{\mathbb P}})} \\
		& {\Sh({\cal C}, J)} \\
		\\
		&&& {}
		\arrow["{C_{{\cal G}(\eta_{\mathbb P})}}", shift left=2, from=1-1, to=1-3]
		\arrow["{\textup{\bf Sh}({\cal G}(\eta_{\mathbb P}))}", shift left=2, from=1-3, to=1-1]
		\arrow["{C_{p_{\mathbb P}}}"', shift right=2, from=1-1, to=2-2]
		\arrow[from=1-3, to=2-2]
		\arrow["\sim", shift right=3, draw=none, from=1-1, to=1-3]
	\end{tikzcd}\]	
\end{thm}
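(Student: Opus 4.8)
The plan is to obtain the claimed $2$-adjunction by concatenating the two natural equivalences displayed in the \ac bridge' above, so that the only genuinely new work is checking pseudonaturality in both variables and identifying the unit. First I would record that both assignments are (pseudo)functorial: functoriality of $p_{\mathbb P}\mapsto L^{K}_{\mathbb P}$ follows from the functoriality of the construction $f\mapsto L_{f}$ in Proposition \ref{propequivlocalicmorphisms}, once one knows that $C_{p_{\mathbb P}}\cong \Sh_{\Sh({\cal C}, J)}(L^{K}_{\mathbb P})$ over $\Sh({\cal C}, J)$; functoriality of $L\mapsto ({\cal G}(L), J^{\textup{ext}}_{L})$ is immediate from the definition of a morphism of existential fibred sites.

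The heart of the argument is the chain of equivalences, natural in the relative preorder site $p_{\mathbb P}$ and in the internal frame $L'$:
\begin{align*}
\textbf{Frm}_{\Sh({\cal C}, J)}(L^{K}_{\mathbb P}, L') &\simeq \Loc(\Sh({\cal C}, J))(L', L^{K}_{\mathbb P})\\
&\simeq \Topos/\Sh({\cal C}, J)([\Sh_{\Sh({\cal C}, J)}(L')], [\Sh_{\Sh({\cal C}, J)}(L^{K}_{\mathbb P})])\\
&\simeq \Topos/\Sh({\cal C}, J)([\Sh_{\Sh({\cal C}, J)}(L')], [C_{p_{\mathbb P}}])\\
&\simeq \textbf{MorRelSites}_{({\cal C}, J)}(p_{\mathbb P}, p_{{\cal G}(L')}).
\end{align*}
Here the first line is the locale/frame duality; the second is Corollary \ref{cor:fullandfaithfulnessinternallocales}; the third uses the identification $C_{p_{\mathbb P}}\cong \Sh_{\Sh({\cal C}, J)}(L^{K}_{\mathbb P})$, i.e. the fact that $L^{K}_{\mathbb P}$ is by construction the internal locale attached to the localic morphism $C_{p_{\mathbb P}}$ (Propositions \ref{propequivlocalicmorphisms} and \ref{propsitecharinternallocales}); and the fourth is the relative Diaconescu theorem in the form of Remark \ref{remrelDiaconescu}, whose right-hand side is exactly the category of morphisms of relative sites $p_{\mathbb P}\to p_{{\cal G}(L')}$. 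Reading off the outer terms yields the hom-equivalence $\textbf{Frm}(L(p_{\mathbb P}), L')\simeq \textbf{MorRelSites}(p_{\mathbb P}, R(L'))$, which witnesses $L\dashv R$ with $L$ on the left, provided the composite is pseudonatural; I would verify this by checking pseudonaturality of each of the four steps in both arguments, which for the middle two is part of the cited results and for the outer ones is routine.

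Next I would identify the unit. Its component at $p_{\mathbb P}$ is the image of $\id_{L^{K}_{\mathbb P}}$ under the equivalence read with $L'=L^{K}_{\mathbb P}$; unwinding the four steps shows that the resulting morphism of relative sites $p_{\mathbb P}\to p_{{\cal G}(L^{K}_{\mathbb P})}$ is exactly the dense bimorphism $\eta_{{\cal G}({\mathbb P})}$ of Proposition \ref{prop:unitfibredpreorder}. Since $\eta_{{\cal G}({\mathbb P})}$ is a dense bimorphism of sites, both $C_{{\cal G}(\eta_{\mathbb P})}$ and $\Sh({\cal G}(\eta_{\mathbb P}))$ are defined and are quasi-inverse equivalences over $\Sh({\cal C}, J)$, which gives the displayed triangle. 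Finally, because this unit induces an equivalence of the associated relative toposes, and by relative Diaconescu morphisms in $\textbf{MorRelSites}$ are detected through the induced relative toposes, the left adjoint $L$ is fully faithful in the bicategorical sense: it identifies $\textbf{MorRelSites}_{({\cal C}, J)}$ with a full $2$-subcategory of $\textbf{Frm}_{\Sh({\cal C}, J)}$, so that $L\dashv R$ is the asserted reflection.

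I expect the main obstacle to be the bookkeeping of pseudonaturality: each objectwise equivalence must be upgraded to a pseudonatural transformation simultaneously in $p_{\mathbb P}$ and $L'$, so that the composite is a genuine $2$-adjunction rather than a mere family of equivalences of hom-categories, and one must keep careful track of the variance introduced by the locale/frame duality and of the identifications (reduced relative site versus comma site, and $C_{p_{\mathbb P}}$ versus $\Sh_{\Sh({\cal C}, J)}(L^{K}_{\mathbb P})$) along the way. The identification of the unit with $\eta_{{\cal G}({\mathbb P})}$ is the one place where the abstract adjunction must be matched with the concrete construction of Proposition \ref{prop:unitfibredpreorder}, and this is what pins down the reflection.
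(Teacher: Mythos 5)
Your chain of four equivalences is precisely the paper's own proof: the paper obtains the hom-equivalence by combining Theorem \ref{thm:RelativeDiaconescuCartesian} (in the form of Remark \ref{remrelDiaconescu}) with Corollary \ref{cor:fullandfaithfulnessinternallocales}, glued along the identification $C_{p_{\mathbb P}}\cong \Sh_{\Sh({\cal C},J)}(L^{K}_{\mathbb P})$ coming from $L^{K}_{\mathbb P}=L_{C_{p_{\mathbb P}}}$ and Proposition \ref{propequivlocalicmorphisms}, and it identifies the unit with the functor of Proposition \ref{prop:unitfibredpreorder}. Up to and including the identification of the unit, your proposal is the paper's argument with the pseudonaturality bookkeeping made explicit.

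The genuine gap is your final step, the justification of the reflection. You claim the \emph{left} adjoint $p_{\mathbb P}\mapsto L^{K}_{\mathbb P}$ is $2$-fully-faithful because the unit induces an equivalence of the associated relative toposes and because ``morphisms in $\textup{\bf MorRelSites}$ are detected through the induced relative toposes''. Neither holds. Full faithfulness of a left adjoint is equivalent to the unit being an equivalence \emph{in the source $2$-category}, i.e.\ in $\textup{\bf MorRelSites}_{({\cal C},J)}$ itself; but $\eta_{{\cal G}({\mathbb P})}$ is only a dense morphism of sites inducing an equivalence of toposes --- the completion ${\cal G}(L^{K}_{\mathbb P})$ is genuinely larger than ${\cal G}({\mathbb P})$, and there is in general no morphism of relative sites going back. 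Moreover, relative Diaconescu does \emph{not} say that morphisms of relative sites are detected by the induced toposes: it identifies $\Topos/\Sh({\cal C},J)([\Sh_{\Sh({\cal C},J)}(L')],[C_{p_{\mathbb P}}])$ with morphisms of relative sites into the \emph{completed} target $p_{{\cal G}(L')}$, not into an arbitrary site presenting that relative topos; this discrepancy is the whole reason the completion exists. A concrete failure over the one-point base: there is no functor at all from the one-point preorder to the empty preorder, yet there is exactly one frame homomorphism between their ideal completions $2\to 1$, so the left adjoint is not even full. What actually makes the adjunction a reflection is the \emph{counit}: $L^{J^{\textup{ext}}_{L'}}_{{\cal G}(L')}\cong L'$, because an internal frame is recovered as the internal locale of its own existential topos (Proposition \ref{propsitecharinternallocales}); hence the \emph{right} adjoint $L'\mapsto ({\cal G}(L'),J^{\textup{ext}}_{L'})$ is $2$-fully-faithful, and $\textup{\bf Frm}_{\Sh({\cal C},J)}$ embeds as a full reflective sub-$2$-category of $\textup{\bf MorRelSites}_{({\cal C},J)}$ with $p_{\mathbb P}\mapsto L^{K}_{\mathbb P}$ as reflector. (The theorem's parenthetical states the inclusion in the same direction you do, but that reading is not mathematically sustainable --- as the counterexample shows --- whereas the preceding discussion in the paper, ``a reflection of the category of relative preorder sites into the category of internal frames'', is; your argument should go through the counit, not the unit.)
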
\qed

\begin{defn}
	We shall call the internal locale $L^{K}_{\mathbb P}$ the \emph{fibred ideal completion} of the relative preorder site $p_{\mathbb P}:({\cal G}({\mathbb P}), K) \to ({\cal C}, J)$.
\end{defn}

\begin{remark}
	In the case $\Sh({\cal C}, J)$ being the topos $\Set$ of sets and ${\cal C}$ being the one-object and one-arrow category with the trivial topology on it, the locale $L^{K}_{\mathbb P}$ coincides with the locale $\textup{Id}_{K}({\mathbb P})$ of $K$-ideals on the preorder ${\mathbb P}$.
	
	The equivalence in the theorem is the fibred generalization of the equivalence 
	\[
	\Sh({\mathbb P}, K)\simeq \Sh(\textup{Id}_{K}({\mathbb P}), J^{\textup{can}}_{\textup{Id}_{K}({\mathbb P})}).
	\]
\end{remark}

The following proposition gives an explicit description of the canonical functor of Proposition \ref{propbidense} in the case of a fibred preordered site.	

\begin{prop}\label{prop:unitfibredpreorder}
	Let $({\mathbb P}, K)$ be a fibered preordered site over a small-generated site $({\cal C}, J)$. Then the canonical functor $$\eta_{p_{\mathbb P}}:{\cal G}({\mathbb P}) \to (1_{\Sh({\cal G}({\mathbb P}), K)} \downarrow^{\textup{Sub}} C_{p_{\mathbb P}}^{\ast})={\cal G}(L_{C_{p_{\mathbb P}}})$$ of Proposition \ref{propbidense} is the result of applying the Grothendieck's construction to the indexed functor
	\[
	\eta_{\mathbb P}:{\mathbb P} \to L_{C_{p_{\mathbb P}}}
	\] 
	described as follows:
	
	\begin{itemize}
		\item For any $c\in {\cal C}$, $L_{C_{p_{\mathbb P}}}(c)$ identifies with the frame $$\textup{ClSub}^{K}_{[{\cal G}({\mathbb P})^{\textup{op}}, \Set]}(\Hom_{\cal C}(p_{\mathbb P}(-), c))$$ of $K$-closed subobjects in $[{\cal G}({\mathbb P})^{\textup{op}}, \Set]$ of the presheaf $\Hom_{\cal C}(p_{\mathbb P}(-), c)$.
		
		\item The indexed functor $\eta_{\mathbb P}$ acts at an object $c\in {\cal C}$ as the functor 
		\[
		\eta_{\mathbb P}(c):{\mathbb P}(c) \to L_{C_{p_{\mathbb P}}}(c)=\textup{ClSub}^{K}_{[{\cal G}({\mathbb P})^{\textup{op}}, \Set]}(\Hom_{\cal C}(p_{\mathbb P}(-), c))
		\]
		sending any element $x\in {\mathbb P}(c)$ to the $K$-closure of the subfunctor of $\Hom_{\cal C}(p_{\mathbb P}(-), c)$ sending any object $(c', x')$ of ${\cal G}({\mathbb P})$ to the subset $S_{(c', x')}\subseteq \Hom_{\cal C}(p_{\mathbb P}((c', x')), c)=\Hom_{\cal C}(c', c)$ consisting of the arrows $g:c'\to c$ such that $x'\leq L(g)(x)$.   
	\end{itemize} 
\end{prop}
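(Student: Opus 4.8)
The plan is to unwind the general construction of Proposition \ref{propbidense} in the special case $G=p_{\mathbb P}$, exploiting that $p_{\mathbb P}$ is faithful. First I would record that, since each fibre ${\mathbb P}(c)$ is a preorder, the projection $p_{\mathbb P}$ is faithful: two parallel arrows of ${\cal G}({\mathbb P})$ lying over the same arrow of ${\cal C}$ have equal (and unique) second components. By the last assertion of Proposition \ref{propbidense}, the dense bimorphism $\eta_{p_{\mathbb P}}$ therefore lands in $(1_{\Sh({\cal G}({\mathbb P}), K)} \downarrow^{\textup{Sub}} C_{p_{\mathbb P}}^{\ast})$, which --- as $C_{p_{\mathbb P}}$ is localic by Proposition 7.11 of \cite{denseness} --- is identified with ${\cal G}(L_{C_{p_{\mathbb P}}})$ as recalled in Section \ref{sec:localicmorphisms}.

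Next I would identify the fibres. For each $c\in {\cal C}$ one has $L_{C_{p_{\mathbb P}}}(c)=\Sub_{\Sh({\cal G}({\mathbb P}), K)}(C_{p_{\mathbb P}}^{\ast}(l_J(c)))$, and from the description of the inverse image of a comorphism of sites (cf. the isomorphism $C_G^{\ast}(l_J(G(D)))\cong \sheafify_{K}({\cal C}(G(-),G(D)))$ in Proposition \ref{propbidense}) we have $C_{p_{\mathbb P}}^{\ast}(l_J(c))\cong \sheafify_{K}(\Hom_{\cal C}(p_{\mathbb P}(-), c))$. Invoking the standard order-isomorphism between subobjects of a sheafification $\sheafify_{K}(P)$ and $K$-closed subobjects of the presheaf $P$ (which sends a subsheaf to its pullback along the unit $P\to \sheafify_{K}(P)$), I obtain the announced identification $L_{C_{p_{\mathbb P}}}(c)\cong \textup{ClSub}^{K}_{[{\cal G}({\mathbb P})^{\textup{op}}, \Set]}(\Hom_{\cal C}(p_{\mathbb P}(-), c))$. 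The commuting triangle of Proposition \ref{propbidense} shows that $\eta_{p_{\mathbb P}}$ sends $(c,x)$ to an object lying over $l_J(c)$, so $\eta_{p_{\mathbb P}}$ is a morphism of fibrations over ${\cal C}$ (covering $l_J$) and hence is ${\cal G}$ applied to an indexed functor $\eta_{\mathbb P}:{\mathbb P}\to L_{C_{p_{\mathbb P}}}$ of ${\cal C}$-indexed preorders.

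It then remains to compute the action of $\eta_{\mathbb P}$ on objects. By Proposition \ref{propbidense}, $\eta_{p_{\mathbb P}}(c,x)$ has third component the image under $\sheafify_{K}$ of the canonical map $y_{{\cal G}({\mathbb P})}(c,x)\to \Hom_{\cal C}(p_{\mathbb P}(-), c)$ classifying the identity of $c$. At an object $(c',x')$ this map sends an arrow $(c',x')\to (c,x)$ to its image under $p_{\mathbb P}$; since the fibres are preorders, for a given $g:c'\to c$ there is at most one such arrow, namely precisely when $x'\leq {\mathbb P}(g)(x)$, so the map is a monomorphism of presheaves with image exactly the subfunctor $S_x$ defined by $S_x(c',x')=\{g:c'\to c \mid x'\leq {\mathbb P}(g)(x)\}$. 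Under the closed-subobject correspondence of the previous paragraph, the subobject $\sheafify_{K}(S_x)\hookrightarrow \sheafify_{K}(\Hom_{\cal C}(p_{\mathbb P}(-), c))$ corresponds to the $K$-closure $\overline{S_x}$, which is the asserted value of $\eta_{\mathbb P}(c)$ on $x$. Functoriality of $\eta_{\mathbb P}$ in both $c$ and $x$, together with compatibility with the transition maps of ${\mathbb P}$ and of $L_{C_{p_{\mathbb P}}}$, is forced by $\eta_{p_{\mathbb P}}$ being a morphism of fibrations and requires no further computation.

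The main obstacle I anticipate is the second step: passing correctly from subsheaves of $C_{p_{\mathbb P}}^{\ast}(l_J(c))$ to $K$-closed subpresheaves of $\Hom_{\cal C}(p_{\mathbb P}(-), c)$, and checking that this identification is the one compatible with the internal-frame structure of $L_{C_{p_{\mathbb P}}}$, so that the resulting $\eta_{\mathbb P}$ genuinely lands in the frames described in the first bullet with the expected transition morphisms. The object-level identification of the image with $S_x$ is then a direct unwinding of the definitions of the Grothendieck construction and of the Yoneda embedding.
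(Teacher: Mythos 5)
Your proof is correct and follows essentially the same route as the paper's: the identification of the fibres via $C_{p_{\mathbb P}}^{\ast}\circ a_{J}\cong a_{K}\circ p_{\mathbb P}^{\ast}$ (valid precisely because $p_{\mathbb P}$ is a comorphism of sites) followed by the order-isomorphism between subobjects of $a_{K}(P)$ and $K$-closed subpresheaves of $P$ (Proposition 2.3 of \cite{denseness}) is exactly the argument given in the paper. Your extra steps --- noting that $p_{\mathbb P}$ is faithful (so that $\eta_{p_{\mathbb P}}$ lands in $(1_{\Sh({\cal G}({\mathbb P}), K)}\downarrow^{\textup{Sub}} C_{p_{\mathbb P}}^{\ast})$) and unwinding the object-level action (the Yoneda map is monic with image $S_{x}$, whose sheafification corresponds to the $K$-closure $\overline{S_{x}}$) --- only make explicit what the paper treats as implicit in Proposition \ref{propbidense}.
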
	

\begin{proof}
	By definition, $L_{C_{p_{\mathbb P}}}(c)=\Sub_{\Sh({\cal G}({\mathbb P}), K)}(C_{p_{\mathbb P}}^{\ast}(l(c)))$. Now,
	\[
	C_{p_{\mathbb P}}^{\ast}(l(c))=C_{p_{\mathbb P}}^{\ast}(a_{J}(y_{\cal C}(c)))=a_{K}(p_{\mathbb P}^{\ast}(y_{\cal C}(c)))
	\] 
	(since $p_{{\mathbb P}}$ is a comorphism of sites $({\cal G}({\mathbb P}), K)\to ({\cal C}, J)$).
	
	So, by Proposition 2.3 of \cite{denseness},
	\[
	L_{C_{p_{\mathbb P}}}(c)\cong \textup{ClSub}^{K}_{[{\cal G}({\mathbb P})^{\textup{op}}, \Set]}(\Hom_{\cal C}(p_{\mathbb P}(-), c)),
	\]
	as required.
\end{proof}	

\begin{remark}
	Notice the independence of the above constructions from the topology $J$ on the base category $\cal C$; in fact, as one can see in the proof of the proposition, the reason for this is that $p_{{\mathbb P}}$ is a comorphism of sites $({\cal G}({\mathbb P}), K)\to ({\cal C}, J)$ and hence $C_{p_{\mathbb P}}^{\ast}  \circ a_{J}\cong a_{K}\circ p_{\mathbb P}^{\ast}$. 
\end{remark}

\section{The hyperconnected-localic factorization in terms of internal locales}\label{sec:hyperconn}

As shown by the following result, the hyperconnected-localic factorization of a geometric morphism can be naturally described in terms of the internal locale canonically associated with it. 

\begin{thm}\label{hyperconnlocalicinternallocale}
	Let $f:{\cal F}\to {\cal E}$ be a geometric morphism. Then the functor $L_{f}:{\cal E}^{\textup{op}} \to \textup{\bf Cat}$ sending each object $E$ of $\cal E$ to the subobject lattice $\Sub_{\cal F}(f^{\ast}(E))$, and each arrow $e:E' \to E$ in $\cal E$ to the pullback functor $e^{\ast}:\Sub_{\cal F}(f^{\ast}(E)) \to \Sub_{\cal F}(f^{\ast}(E'))$, is an internal locale to $\cal E$. 
	
	Let $i_{f}:{\cal E}\to {\cal G}(L_{f})$ be the functor sending any object $E$ of $\cal E$ to the object $(E, 1_{f^{\ast}(E)})$ of ${\cal G}(L_{f})$. 
	
	Then $p_{L}\dashv i_{f}$, $i_{f}$ is a morphism of sites $({\cal E}, J^{\textup{can}}_{\cal E}) \to ({\cal G}(L_{f}), J^{\textup{ext}}_{L_{f}})$ and $\pi_{L}$ is a comorphism of sites $({\cal G}(L_{f}), J^{\textup{ext}}_{L_{f}}) \to ({\cal E}, J^{\textup{can}}_{\cal E})$ inducing the same geometric morphism $\Sh(i_{f})\cong C_{\pi_{\cal E}}:\Sh_{\cal E}(L_{f}) \to {\cal E}$, which is the localic reflection of $f$ (that is, the localic part of the hyperconnected-localic factorization of $f$).
	
	The canonical projection functor $\pi_{\cal F}:{\cal G}(L_{f})\to {\cal F}$ is a morphism of sites $({\cal G}(L_{f}), J^{\textup{ext}}_{L_{f}}) \to ({\cal F}, J^{\textup{can}}_{\cal F})$, and the geometric morphism $\Sh(\pi_{\cal F}):{\cal F}\to \Sh_{\cal E}(L_{f})$ it induces is the hyperconnected part of the hyperconnected-localic factorization of $f$.
	
	
	\[\begin{tikzcd}
		{\textup{\bf Sh}({\cal F}, J^{\textup{can}}_{\cal F})\simeq {\cal F}} && {{\cal E}\simeq \textup{\bf Sh}({\cal E}, J^{\textup{can}}_{\cal E}) } \\
		\\
		& {\textup{\bf Sh}_{\cal E}(L_{f}):=\textup{\bf Sh}({\cal G}(L_{f}), J^{\textup{ext}}_{L_{f}})}
		\arrow["f", from=1-1, to=1-3]
		\arrow["{\textup{\bf Sh}(\pi_{\cal F})}"', from=1-1, to=3-2]
		\arrow["{C_{p_{L}}\cong\textup{\bf Sh}(i_{f}) }"', from=3-2, to=1-3]
		\arrow["hyperconnected"{sloped}, shift left=1, draw=none, from=1-1, to=3-2]
		\arrow["{localic}"{sloped}, shift left=1, draw=none, from=3-2, to=1-3]
	\end{tikzcd}\]	   
\end{thm}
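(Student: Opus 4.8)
The plan is to build the factorization explicitly at the level of sites and then invoke the uniqueness of the hyperconnected--localic factorization to identify it. First I would record that $L_{f}$ is an internal locale in $\cal E$ for arbitrary $f$: each $\Sub_{\cal F}(f^{\ast}(E))$ is a frame, each $e^{\ast}=(f^{\ast}(e))^{\ast}$ is a frame homomorphism with left adjoint $\exists_{f^{\ast}(e)}$, the Beck--Chevalley and Frobenius conditions follow from the corresponding facts for images and pullbacks of subobjects in the topos $\cal F$ together with the left-exactness of $f^{\ast}$, and the underlying set-valued functor is a canonical sheaf because it is represented by $f_{\ast}(\Omega_{\cal F})$ via the adjunction $f^{\ast}\dashv f_{\ast}$. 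This is precisely the verification of Definition \ref{definternallocale} already carried out (for the localic case, but without ever using localicness) in the proof of Proposition \ref{propequivlocalicmorphisms}; by that proposition the structure morphism $C_{p_{L_{f}}}\colon \Sh_{\cal E}(L_{f})\to {\cal E}$ is localic.

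Next I would treat the adjunction and the two comparison functors. Since every fibre $L_{f}(E)$ has a top element $1_{f^{\ast}(E)}$ preserved by the transition maps, Lemma \ref{lemma:fib_with_terminals_has_radj} gives $p_{L_{f}}\dashv i_{f}$ with $i_{f}(E)=(E,1_{f^{\ast}(E)})$. As $p_{L_{f}}$ is a comorphism of sites, its right adjoint $i_{f}$ is a morphism of sites inducing the same geometric morphism, so $\Sh(i_{f})\cong C_{p_{L_{f}}}$ (Proposition 3.14 \cite{denseness}); this is the localic leg. For the other leg, $\pi_{\cal F}\colon (E,l)\mapsto \dom(l)$ preserves finite limits (the category ${\cal G}(L_{f})$ is cartesian, with pullbacks computed from pullbacks in $\cal E$ and meets in the fibres, and $\pi_{\cal F}$ carries these to pullbacks in $\cal F$) and is cover-preserving, using the translation from section \ref{sec:localicmorphisms} whereby a family is $J^{\textup{ext}}_{L_{f}}$-covering iff the induced family of domain maps $\{\dom(l_{i})\to\dom(l)\}$ is epimorphic in $\cal F$; hence $\pi_{\cal F}$ is a morphism of sites $({\cal G}(L_{f}), J^{\textup{ext}}_{L_{f}})\to({\cal F}, J^{\textup{can}}_{\cal F})$.

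The crux is to show that $\Sh(\pi_{\cal F})\colon {\cal F}\to \Sh_{\cal E}(L_{f})$ is hyperconnected, for which I would apply the criterion of Proposition 6.25 \cite{denseness}: $\pi_{\cal F}$ must be cover-reflecting and closed-sieve-lifting. Cover-reflection is immediate from the same translation, since there the $J^{\textup{ext}}_{L_{f}}$-covering condition $\bigvee_{i}\exists_{e_{i}}(l_{i})=l$ is \emph{equivalent} to the epimorphy of $\{\dom(l_{i})\to\dom(l)\}$, not merely implied by it. For the closed-sieve-lifting condition, the key observation is that, because $l\colon \dom(l)\mono f^{\ast}(E)$ is monic, the subobjects of $\dom(l)$ in $\cal F$ are exactly the $m\in \Sub_{\cal F}(f^{\ast}(E))=L_{f}(E)$ with $m\leq l$; a $J^{\textup{can}}_{\cal F}$-closed sieve $S$ on $\pi_{\cal F}(E,l)=\dom(l)$ thus corresponds to such an $m$, and is lifted by the $J^{\textup{ext}}_{L_{f}}$-closed sieve on $(E,l)$ generated by the single vertical arrow $(1_{E},\iota)\colon (E,m)\to (E,l)$, whose image under $\pi_{\cal F}$ generates precisely the sieve of maps factoring through $m\mono \dom(l)$. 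This is where the essential content sits, and I expect it to be the main obstacle: one must see that the apparently larger supply of subobjects available in $\cal F$ over an object $\dom(l)$ is entirely accounted for by the frame $L_{f}(E)$ below $l$, which is exactly the statement that $\Sh(\pi_{\cal F})$ does not enlarge subobject lattices, i.e. is hyperconnected.

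Finally I would check that the composite $C_{p_{L_{f}}}\circ \Sh(\pi_{\cal F})$ is isomorphic to $f$ over $\cal E$ by evaluating inverse images on the generating objects: one has $C_{p_{L_{f}}}^{\ast}(l(E))\cong l_{J^{\textup{ext}}_{L_{f}}}(E,1_{f^{\ast}(E)})$ (the presheaves $p_{L_{f}}^{\ast}(y(E))$ and $y(E,1_{f^{\ast}(E)})$ coincide) and $\Sh(\pi_{\cal F})^{\ast}\,l_{J^{\textup{ext}}_{L_{f}}}(E,l)\cong \dom(l)$, so the composite inverse image sends $l(E)$ to $\dom(1_{f^{\ast}(E)})=f^{\ast}(E)$, whence it agrees with $f^{\ast}$ on a generating family and the triangle commutes. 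Having exhibited $f\cong C_{p_{L_{f}}}\circ \Sh(\pi_{\cal F})$ with $C_{p_{L_{f}}}$ localic and $\Sh(\pi_{\cal F})$ hyperconnected, the uniqueness up to canonical equivalence of the hyperconnected--localic factorization lets me conclude that these are, respectively, its localic and hyperconnected parts, and that $\Sh_{\cal E}(L_{f})$ is the localic reflection of $f$.
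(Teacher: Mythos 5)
Your proposal is correct, and its skeleton coincides with the paper's: the localic leg is obtained from the comorphism of sites $p_{L_{f}}$ (localic by Proposition 7.11 of \cite{denseness}, since the projection of a fibred preorder is faithful), and hyperconnectedness of $\Sh(\pi_{\cal F})$ is established via the same criterion of Proposition 6.25 of \cite{denseness} (cover-reflection plus closed-sieve-lifting), with cover-reflection coming from the same translation of $J^{\textup{ext}}_{L_{f}}$-covers into epimorphic families in $\cal F$. You differ in three places, each of which makes the argument more self-contained. First, you prove that $\pi_{\cal F}$ is a morphism of sites by a direct check of cartesianness (pullbacks in ${\cal G}(L_{f})$ are computed from pullbacks in $\cal E$ and meets in the fibres, and go to pullbacks in $\cal F$), whereas the paper factors $\pi_{\cal F}$ as the inclusion $({\cal G}(L_{f}), J^{\textup{ext}}_{L_{f}})\hookrightarrow ((1_{\cal F}\downarrow f^{\ast}), J_{f})$ followed by the canonical projection to $\cal F$, reusing Proposition \ref{prop:densenessrelsitelocalicmorphism}. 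Second, you spell out the closed-sieve-lifting condition, identifying $J^{\textup{can}}_{\cal F}$-closed sieves on $\dom(l)$ with subobjects $m\leq l$ in the fibre $L_{f}(E)$ and lifting them along the vertical arrow $(1_{E},\iota):(E,m)\to (E,l)$; the paper declares this step obvious, and your argument is precisely the reason why, as well as being the conceptual heart of the matter (every subobject of $\dom(l)$ in $\cal F$ is already accounted for in the fibre, which is what hyperconnectedness expresses). Third, and most substantively, you verify that the composite $C_{p_{L_{f}}}\circ \Sh(\pi_{\cal F})$ is isomorphic to $f$ by computing inverse images on the generators $l(E)$, and then invoke uniqueness of the hyperconnected-localic factorization to identify the two legs; the paper's proof shows that one leg is localic and the other hyperconnected but leaves the identification of the composite with $f$ implicit, so on this point your proof completes a step that the paper treats elliptically.
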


\begin{proof}
	The geometric morphism $$C_{\pi_{\cal E}}:\Sh_{\cal E}(L_{f})=\Sh({\cal G}(L_{f}), J^{\textup{ext}}_{L_{f}}) \to \Sh({\cal E}, J^{\textup{can}}_{\cal E})\simeq {\cal E}$$ is localic (by Proposition 7.11 \cite{denseness}).
	
	The canonical projection functor $\pi_{\cal F}:{\cal G}(L_{f})\to {\cal F}$ is a morphism of sites $({\cal G}(L_{f}), J^{\textup{ext}}_{L_{f}}) \to ({\cal F}, J^{\textup{can}}_{\cal F})$ since it is equal to the composite of the inclusion functor $({\cal G}(L_{f}), J^{\textup{ext}}_{L_{f}}) \to ((1_{\cal F}\downarrow f^{\ast}), J_{f})$, which was observed in section \ref{sec:localicmorphisms} to be a morphism of sites (cf. Proposition \ref{prop:densenessrelsitelocalicmorphism} therein), and the canonical projection functor $(1_{\cal F}\downarrow f^{\ast}) \to {\cal F}$, which is a morphism of sites $((1_{\cal F}\downarrow f^{\ast}), J_{f}) \to ({\cal F}, J^{\textup{can}}_{\cal F})$.
	
	The morphism $\Sh(\pi_{\cal F}):{\cal F}\to \Sh_{\cal E}(L_{f})$ induced by it is hyperconnected since $\pi_{\cal F}:({\cal G}(L_{f}), J^{\textup{ext}}_{L_{f}}) \to ({\cal F}, J^{\textup{can}}_{\cal F})$ satisfies the conditions of Proposition \ref{prophyperconnectedsites}. Indeed, the fact that it is closed-sieve lifting is obvious, while the fact that $\pi_{\cal F}$ is cover-reflecting follows from the fact that, as it is readily seen, the property of a family of arrows 
	\[
	\{(f_i, e_i):(F_i, E_i, \alpha_i:F_i\mono f^\ast(E_i))\to (F, E, \alpha:F\mono f^{\ast}(E)) \mid i\in I\}
	\]
	in ${\cal G}(L_{f})$ to be sent by $\pi_{\cal F}$ to an epimorphic family (i.e. to be such that the family $\{f_i \mid i\in I\}$ is epimorphic in $\cal F$) can be reformulated as the condition that $\bigcup_{i\in I}\exists_{f^{\ast}(e_i)}(\alpha_{i})=1_{f^{\ast}(E)}$ in $\Sub_{\cal F}(f^{\ast}(E))=L_{f}(E)$ (i.e. as the condition that it should be $J^{\textup{ext}}_{L_{f}}$-covering).	
\end{proof}

The following proposition, which is an easy corollary of Theorem \ref{hyperconnlocalicinternallocale} gives a site-theoretic description of the above construction in the case of geometric morphisms induced by a morphisms of sites.

\begin{prop}
	Let $A:({\cal C}, J) \to ({\cal D}, K)$ be a morphism of sites. Then the functor
	\[
	L_{A}:{\cal C}^{\textup{op}} \to \textup{\bf Cat}
	\] 	
	sending any object $c$ of $\cal C$ to the frame $\textup{ClSv}^{K}(A(c))$ of $K$-closed sieves on $A(c)$ and an arrow $f:c\to c'$ in $\cal C$ to the operation of pulling back ($K$-closed) sieves along $A(f)$ is an internal locale in $\Sh({\cal C}, J)$.
	
	Let $i_{A}:{\cal C}\to {\cal G}(L_{A})$ be the functor sending any object $c$ of $\cal C$ to the object $(c, M_{A(c)})$ of ${\cal G}(L_{A})$, where $M_{A(c)}$ is the maximal sieve on $A(c)$, and any arrow $f:c\to c'$ in $\cal C$ to the arrow $A(f):(c, M_{A(c)}) \to (c', M_{A(c')})$ in ${\cal G}(L_{A})$.
	
	Then $i_{A}$ is a morphism of sites $({\cal C}, J) \to ({\cal G}(L_{A}), J^{\textup{ext}}_{L_{A}})$, right adjoint to the comorphism of sites $p_{L(A)}:({\cal G}(L_{A}), J^{\textup{ext}}_{L_{A}})\to ({\cal C}, J)$ given by the canonical projection functor. 
	
	Let $z_{A}:{\cal G}(L_{A})\to {\cal D}^{s}_{K}$ be the canonical functor towards the category of Corollary 2.16 \cite{denseness}, and $C^{s}_{K}$ the Grothendieck topology on ${\cal D}^{s}_{K}\hookrightarrow \Sh({\cal D}, K)$ induced by the canonical topology on $\Sh({\cal D}, K)$. Then $z_{A}$ yields a morphism of sites $z_{A}:({\cal G}(L_{A}), J^{\textup{ext}}_{L_{A}})\to ({\cal D}^{s}_{K}, C^{s}_{K})$ inducing an hyperconnected morphism $\Sh(z_{A}):\Sh({\cal D}^{s}_{K}, C^{s}_{K}) \to \Sh({\cal G}(L_{A}), J^{\textup{ext}}_{L_{A}})$.
	
	So, the factorization of $A$ as $z_{A}\circ i_{A}$ induces the hyperconnected-localic factorization of the geometric morphism $\Sh(A)$: 

$$
\resizebox{\textwidth}{!}{	
	\begin{tikzcd}[ampersand replacement=\&]
		{\Sh({\cal D}^{s}_{K}, C^{s}_{K})\simeq \Sh({\cal D}, K)} \&\& \textup{\bf Sh}({\cal C}, J) \\
		\\
		\& {\textup{\bf Sh}_{\Sh({\cal C}, J)}(L_{A}):=\textup{\bf Sh}({\cal G}(L_{A}), J^{\textup{ext}}_{L_{A}})}
		\arrow["\Sh(A)", from=1-1, to=1-3]
		\arrow["{\textup{\bf Sh}(z_{A})}"', from=1-1, to=3-2]
		\arrow["{C_{p_{L_{A}}}\cong\textup{\bf Sh}(i_{A}) }"', from=3-2, to=1-3]
		\arrow["hyperconnected"{sloped}, shift left=1, draw=none, from=1-1, to=3-2]
		\arrow["{localic}"{sloped}, shift left=1, draw=none, from=3-2, to=1-3]
	\end{tikzcd}}
$$
\end{prop}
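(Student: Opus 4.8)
The plan is to apply Theorem \ref{hyperconnlocalicinternallocale} to the geometric morphism $f:=\Sh(A):\Sh({\cal D}, K)\to \Sh({\cal C}, J)$ and to translate its output into site-theoretic terms. Writing ${\cal E}=\Sh({\cal C}, J)$ and ${\cal F}=\Sh({\cal D}, K)$, the internal locale produced by the theorem is $L_{f}(E)=\Sub_{\cal F}(f^{\ast}(E))$. First I would compute this locale on the objects coming from $\cal C$: since $A$ is a morphism of sites, $f^{\ast}(l_{J}(c))\cong l_{K}(A(c))=a_{K}(y_{\cal D}(A(c)))$, and subobjects of a representable sheaf $l_{K}(d)$ in $\Sh({\cal D}, K)$ correspond bijectively to $K$-closed sieves on $d$ (cf. Proposition 2.3 of \cite{denseness}). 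This yields a natural identification $L_{f}(l_{J}(c))\cong \textup{ClSv}^{K}(A(c))=L_{A}(c)$, compatible with transition maps, since pulling back subobjects along $f^{\ast}(l_{J}(g))=l_{K}(A(g))$ corresponds to pulling back $K$-closed sieves along $A(g)$. Thus $L_{A}$ is the composite of $L_{f}$ with the canonical functor ${\cal C}\to {\cal E}$, which exhibits it as an internal locale in $\Sh({\cal C}, J)$ (in the sense of Proposition \ref{propsitecharinternallocales}); this is the first assertion.

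Next I would treat the localic side. Each fibre $L_{A}(c)$ is a frame whose top element is the maximal sieve $M_{A(c)}$, and the transition functors (pullback of sieves) preserve these top elements; hence by Lemma \ref{lemma:fib_with_terminals_has_radj} the projection $p_{L_{A}}:{\cal G}(L_{A})\to {\cal C}$ has a right adjoint, which is exactly $i_{A}$. Since a left-adjoint comorphism of sites and its right-adjoint morphism of sites induce the same geometric morphism (Proposition 3.14 of \cite{denseness}), we get $\Sh(i_{A})\cong C_{p_{L_{A}}}$; by Theorem \ref{hyperconnlocalicinternallocale} this morphism is precisely the localic reflection of $f=\Sh(A)$. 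This establishes the statements about $i_{A}$ and $p_{L_{A}}$.

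I would then identify the hyperconnected part. Under the identification above, an object $(c, S)$ of ${\cal G}(L_{A})$ — with $S$ a $K$-closed sieve on $A(c)$ — corresponds to the subobject $S\mono l_{K}(A(c))=f^{\ast}(l_{J}(c))$ of $\cal F$, and the canonical functor $z_{A}$, sending $(c,S)$ to the corresponding object of ${\cal D}^{s}_{K}\hookrightarrow \Sh({\cal D}, K)$, is, up to the equivalence $\Sh({\cal D}^{s}_{K}, C^{s}_{K})\simeq \Sh({\cal D}, K)={\cal F}$, the projection functor $\pi_{\cal F}:{\cal G}(L_{f})\to {\cal F}$ of Theorem \ref{hyperconnlocalicinternallocale}. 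Since that theorem identifies $\Sh(\pi_{\cal F})$ with the hyperconnected part of the factorization of $f$, the induced morphism $\Sh(z_{A})$ is hyperconnected, and the factorization $A=z_{A}\circ i_{A}$ induces the hyperconnected-localic factorization of $\Sh(A)$, as claimed.

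The main obstacle I anticipate is the change of base site: Theorem \ref{hyperconnlocalicinternallocale} is phrased over the canonical site $({\cal E}, J^{\textup{can}}_{\cal E})$, whereas the proposition works over the chosen site of definition $({\cal C}, J)$. One must check that the existential topology $J^{\textup{ext}}_{L_{A}}$ on ${\cal G}(L_{A})$ is the topology induced on it, through the canonical functor ${\cal C}\to {\cal E}$, by $J^{\textup{ext}}_{L_{f}}$, so that $\Sh({\cal G}(L_{A}), J^{\textup{ext}}_{L_{A}})\simeq \Sh_{\cal E}(L_{f})$ — equivalently, that passing to a smaller site of definition does not change the existential topos. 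The second delicate point is the precise identification of the category ${\cal D}^{s}_{K}$ and of $z_{A}$ with $\pi_{\cal F}$; should this not be wholly transparent, one can instead verify directly, via the cover-reflecting and closed-sieve-lifting criterion used in the proof of Theorem \ref{hyperconnlocalicinternallocale}, that $\Sh(z_{A})$ is hyperconnected.
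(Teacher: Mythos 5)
Your proposal is correct and takes essentially the same route as the paper: the paper states this proposition without proof, as an easy corollary of Theorem \ref{hyperconnlocalicinternallocale}, and your argument is exactly that corollary spelled out — apply the theorem to $f=\Sh(A)$, identify $L_A$ with the composite of $L_f$ along the canonical functor ${\cal C}\to\Sh({\cal C},J)$ via the correspondence between $K$-closed sieves on $A(c)$ and subobjects of $l_K(A(c))$, obtain the adjunction $p_{L_A}\dashv i_A$ from Lemma \ref{lemma:fib_with_terminals_has_radj}, and transfer the localic and hyperconnected parts through the comparison-lemma equivalences. The change-of-base-site check and the identification of $z_A$ with $\pi_{\cal F}$ that you flag as the delicate points are precisely the routine verifications the paper elides.
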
\qed	

\begin{remark}
	Let $\cal F$ be a Grothendieck topos, $(\textbf{1}, T)$ the site of definition for the topos $\Set$ constisting of the one-object and one-arrow category $\textbf{1}$ with the trivial topology $T$, and $A$ the morphism of sites $(\textbf{1}, T)\to ({\cal F}, J^{\textup{can}}_{\cal F})$ picking the terminal object $1$ of $\cal F$ (where $\textup{1}$ is the one-object and one-arrow category and $T$ is the trivial topology on it). Then $L_{A}$ identifies with the frame of subterminal objects $\Sub_{\cal F}(1)$ of $1$ and, under this identification, the functor $z_{A}$ corresponds to the forgetful (inclusion) functor $\Sub_{\cal F}(1) \to {\cal F}$. Notice that this functor has a left adjoint, sending an object $F$ of $\cal F$ to the image of the unique morphism $F\to 1$; this left adjoint is therefore is a comorphism of sites $({\cal F}, J^{\textup{can}}_{\cal F}) \to (\Sub_{\cal F}(1), J^{\textup{can}}_{\Sub_{\cal F}(1)})$, which induces the hyperconnected part of the hyperconnected-localic factorization of the unique geometric morphism ${\cal F}\to \Set\simeq \Sh(\textbf{1}, T)$. This comorphism of sites can be profitably used to represent the canonical hyperconnected morphism ${\cal F} \to \Sh((\Sub_{\cal F}(1))$ in situations where one does not dispose of a site of definition $({\cal D}, K)$ for $\cal F$ whose underlying category is closed in $\cal F$ under subobjects and hence does not have a morphism of sites $z_{A}$ taking values in $({\cal D}, K)$; indeed, a comorphism of sites $({\cal D}, K) \to (\Sub_{\cal F}(1), J^{\textup{can}}_{\Sub_{\cal F}(1)})$ inducing that morphism can be obtained by taking the restriction to $\cal D$ of the canonical comorphism to the subterminals.     
\end{remark}

In light of the closure under subobjects of geometric syntactic categories in the respective classifying toposes, it is interesting to apply the proposition in the context of interpretations of geometric theories.

\begin{prop}\label{prop:internalocaleinterpretations}
	Let $I:{\cal C}_{\mathbb T}\to {\cal C}_{\mathbb S}$ be an interpretation of a geometric theory $\mathbb T$ into a geometric theory ${\mathbb S}$. 
	
	Then the functor
	\[
	L_{I}:{{\cal C}_{\mathbb T}}^{\textup{op}} \to \textup{\bf Cat}
	\] 	
	sending any object $\{\vec{x}. \phi\}$ of ${\cal C}_{\mathbb T}$ to the frame $L_{I}(I(\{\vec{x}. \phi\}))=\Sub_{{\cal C}_{\mathbb S}}(I(\{\vec{x}. \phi\}))$ of $\mathbb S$-provable equivalence classes of geometric formulas-in-context which provably imply the formula-in-context $I(\{\vec{x}. \phi\})$ and any arrow $[\theta]:\{\vec{x}. \phi\}\to \{\vec{y}. \psi\}$ in ${\cal C}_{\mathbb T}$ to the operation of pulling back subobjects along $I([\theta])$ is an internal locale in $\Sh({\cal C}_{\mathbb T}, J_{\mathbb T})$.
	
	The functor $i_{I}:{\cal C}_{\mathbb T}\to {\cal G}(L_{I})$ sending each object to the corresponding maximal subobject is a morphism of sites $({\cal C}_{\mathbb T}, J_{\mathbb T}) \to ({\cal G}(L_{I}), J^{\textup{ext}}_{L_{I}})$, right adjoint to the comorphism of sites $p_{L(I)}:({\cal G}(L_{I}), J^{\textup{ext}}_{L_{I}})\to ({\cal C}_{\mathbb T}, J_{\mathbb T})$ given by the canonical projection functor. 
	
	The canonical functor $z_{I}:{\cal G}(L_{I})\to {\cal C}_{\mathbb S}$ taking the domain of subobjects yields a morphism of sites $z_{I}:({\cal G}(L_{I}), J^{\textup{ext}}_{L_{I}})\to ({\cal C}_{\mathbb S}, J_{\mathbb S})$ inducing an hyperconnected morphism $\Sh(z_{I}):\Sh({\cal C}_{\mathbb S}, J_{\mathbb S}) \to \Sh({\cal G}(L_{I}), J^{\textup{ext}}_{L_{I}})$. 
	
	So, the factorization of $I$ as $z_{I}\circ i_{I}$ induces the hyperconnected-localic factorization of the geometric morphism $\Sh(I)$: 
	
	\[\begin{tikzcd}
		{\Sh({\cal C}_{\mathbb S}, J_{\mathbb S})} && \textup{\bf Sh}({\cal C}_{\mathbb T}, J_{\mathbb T}) \\
		\\
		& {\textup{\bf Sh}_{\Sh({\cal C}_{\mathbb T}, J_{\mathbb T})}(L_{I}):=\textup{\bf Sh}({\cal G}(L_{I}), J^{\textup{ext}}_{L_{I}})}
		\arrow["\Sh(I)", from=1-1, to=1-3]
		\arrow["{\textup{\bf Sh}(z_{I})}"', from=1-1, to=3-2]
		\arrow["{C_{p_{L_{I}}}\cong\textup{\bf Sh}(i_{I}) }"', from=3-2, to=1-3]
		\arrow["hyperconnected"{sloped}, shift left=1, draw=none, from=1-1, to=3-2]
		\arrow["{localic}"{sloped}, shift left=1, draw=none, from=3-2, to=1-3]
	\end{tikzcd}\]	   
	
	In particular, if $I$ is the interpretation associated with a localic expansion of theories, or more generally an interpretation inducing a localic geometric morphism, the morphism $\Sh(z_{I})$ is an equivalence  
	\[
	\Sh({\cal C}_{\mathbb S}, J_{\mathbb S})\simeq \textup{\bf Sh}({\cal G}(L_{I}), J^{\textup{ext}}_{L_{I}})=\textup{\bf Sh}_{\Sh({\cal C}_{\mathbb T}, J_{\mathbb T})}(L_{I})
	\]
	and hence $\textup{\bf Sh}({\cal G}(L_{I}), J^{\textup{ext}}_{L_{I}})$ is an alternative site of definition for the classifying topos of $\mathbb S$.
\end{prop}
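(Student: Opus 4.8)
The plan is to obtain this proposition as a direct instance of the preceding proposition (the one treating a general morphism of sites $A:(\mathcal{C}, J) \to (\mathcal{D}, K)$), applied to the morphism of sites induced by $I$, and then to rewrite the resulting data in syntactic terms by exploiting the closure of geometric syntactic categories under subobjects in their classifying toposes. First I would observe that an interpretation $I:\mathcal{C}_{\mathbb{T}} \to \mathcal{C}_{\mathbb{S}}$ of geometric theories is a geometric functor between the geometric syntactic categories, hence preserves finite limits and sends $J_{\mathbb{T}}$-covering families to $J_{\mathbb{S}}$-covering families; thus it is a morphism of sites $I:(\mathcal{C}_{\mathbb{T}}, J_{\mathbb{T}}) \to (\mathcal{C}_{\mathbb{S}}, J_{\mathbb{S}})$. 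Applying the preceding proposition with $(\mathcal{C}, J)=(\mathcal{C}_{\mathbb{T}}, J_{\mathbb{T}})$, $(\mathcal{D}, K)=(\mathcal{C}_{\mathbb{S}}, J_{\mathbb{S}})$ and $A=I$ immediately yields an internal locale $L_I$ in $\Sh(\mathcal{C}_{\mathbb{T}}, J_{\mathbb{T}})$ whose value at $\{\vec{x}.\phi\}$ is \emph{a priori} the frame $\textup{ClSv}^{J_{\mathbb{S}}}(I(\{\vec{x}.\phi\}))$ of $J_{\mathbb{S}}$-closed sieves on $I(\{\vec{x}.\phi\})$, together with the functors $i_I$, $p_{L(I)}$, $z_I$ and the asserted hyperconnected-localic factorization of $\Sh(I)$.

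The remaining work is the translation into syntactic language, and this is where the specific features of syntactic sites enter. Since $J_{\mathbb{S}}$-closed sieves on an object correspond bijectively (and compatibly with the frame structure and with pullback) to subobjects of its image under the canonical functor $l:\mathcal{C}_{\mathbb{S}} \to \Sh(\mathcal{C}_{\mathbb{S}}, J_{\mathbb{S}})$, and since $\mathcal{C}_{\mathbb{S}}$ is closed under subobjects in its classifying topos, every such subobject is represented by a subobject in $\mathcal{C}_{\mathbb{S}}$ itself; this gives a natural isomorphism
\[
L_I(\{\vec{x}.\phi\}) = \textup{ClSv}^{J_{\mathbb{S}}}(I(\{\vec{x}.\phi\})) \cong \Sub_{\Sh(\mathcal{C}_{\mathbb{S}}, J_{\mathbb{S}})}(l(I(\{\vec{x}.\phi\}))) \cong \Sub_{\mathcal{C}_{\mathbb{S}}}(I(\{\vec{x}.\phi\})),
\]
identifying the fibre with the frame of $\mathbb{S}$-provable-equivalence classes of formulas provably implying $I(\{\vec{x}.\phi\})$; the transition maps, being pullbacks of closed sieves, correspond to pullbacks of subobjects, as claimed. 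By the same closure property the subobject-completion category $(\mathcal{C}_{\mathbb{S}})^{s}_{J_{\mathbb{S}}}$ of Corollary 2.16 of \cite{denseness} is equivalent to $\mathcal{C}_{\mathbb{S}}$ (with $C^{s}_{J_{\mathbb{S}}}$ corresponding to $J_{\mathbb{S}}$ under this equivalence), so that the functor $z_I$ takes values directly in $\mathcal{C}_{\mathbb{S}}$ and sends an object of $\mathcal{G}(L_I)$, i.e. a subobject $\{\vec{y}.\psi\} \hookrightarrow I(\{\vec{x}.\phi\})$, to its domain $\{\vec{y}.\psi\}$. With these identifications the statements about $i_I$, $p_{L(I)}$, $z_I$ and the induced factorization are exactly those furnished by the preceding proposition.

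Finally, the \emph{in particular} clause follows from the uniqueness of the hyperconnected-localic factorization: if $\Sh(I)$ is localic, then in the factorization $\Sh(I) \cong (\textup{localic})\circ \Sh(z_I)$ the hyperconnected factor $\Sh(z_I)$ must be an equivalence (a geometric morphism is localic precisely when the hyperconnected part of its factorization is an equivalence), whence $\Sh(\mathcal{C}_{\mathbb{S}}, J_{\mathbb{S}}) \simeq \Sh(\mathcal{G}(L_I), J^{\textup{ext}}_{L_I})$ exhibits the latter as an alternative site of definition for the classifying topos of $\mathbb{S}$. The main obstacle is the middle step: one must verify that the abstract closed-sieve description of $L_I$ and of $z_I$ coming from the general proposition really matches the concrete subobject description in the syntactic category, i.e. that the correspondence between $J_{\mathbb{S}}$-closed sieves and subobjects in $\mathcal{C}_{\mathbb{S}}$ is an isomorphism of frames, natural in the base object and compatible with the induced Grothendieck topologies; everything else is a routine specialization of the preceding proposition and of Theorem \ref{hyperconnlocalicinternallocale}.
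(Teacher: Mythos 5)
Your proposal is correct and follows exactly the route the paper intends: the paper states this proposition without proof (as an immediate corollary of the preceding proposition for a general morphism of sites, prefaced by the remark about closure of geometric syntactic categories under subobjects in their classifying toposes), and your argument fills in precisely those details --- applying that proposition to $I$ viewed as a morphism of syntactic sites, identifying $J_{\mathbb S}$-closed sieves with subobjects in ${\cal C}_{\mathbb S}$ via the closure property, and deducing the \emph{in particular} clause from uniqueness of the hyperconnected-localic factorization. Nothing in your write-up deviates from or adds to what the paper's own (implicit) proof requires.
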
\qed

\begin{remark}
	If we take ${\mathbb T}$ to be the (cartesian) empty theory ${\mathbb O}_{\Sigma_{\mathbb S}}$ over the signature consisting precisely of the sorts of the signature $\Sigma_{\mathbb S}$ of $\mathbb S$ (with no function or relation symbols) then $\mathbb S$ is a localic expansion of ${\mathbb O}_{\Sigma_{\mathbb S}}$ and hence, if we take $I$ to be the associated interpretation ${\cal C}^{\textup{cart}}_{{\mathbb O}_{\Sigma_{\mathbb S}}} \to {\cal C}_{\mathbb S}$ of ${\mathbb O}_{\Sigma_{\mathbb S}}$ into $\mathbb S$, we recover as site $\textup{\bf Sh}({\cal G}(L_{I}), J^{\textup{ext}}_{L_{I}})$ the alternative syntactic site for the classifying topos of $\mathbb S$ introduced by J. Wrigley in \cite{WrigleyToposesOnline} (we refer to his work for a thorough analysis of this construction). 
\end{remark}

\bibliography{biblio}{}
\bibliographystyle{abbrv}

\vspace{1cm}

\textbf{Acknowledgements:} Part of this work has been written during a visit of the author at the Lagrange Mathematics and Computing Research Center in Paris, which we thank for the excellent working conditions. 

\vspace{1cm}

\textsc{Olivia Caramello} 

\vspace{0.2cm}
{\small \textsc{Dipartimento di Scienza e Alta Tecnologia, Universit\`a degli Studi dell'Insubria, via Valleggio 11, 22100 Como, Italy.}\\
	\emph{E-mail address:} \texttt{olivia.caramello@uninsubria.it}}

\vspace{0.2cm}

{\small \textsc{Istituto Grothendieck, Corso Statuto 24,
		12084 Mondovì, Italy.}\\
	\emph{E-mail address:} \texttt{olivia.caramello@igrothendieck.org}}

\vspace{0.2cm}

The author is also affiliated to the GNSAGA group of the \emph{Istituto Nazionale di Alta Matematica}.
\end{document}